\begin{document}

\numberwithin{equation}{section}

\newtheorem{thm}{Theorem}[section]

\spnewtheorem{rem}[thm]{Remark}{\bf}{}

\newtheorem{lem}[thm]{Lemma}

\newtheorem{cor}[thm]{Corollary}

\spnewtheorem{ex}[thm]{Example}{\bf}{}

\newtheorem{pro}[thm]{Proposition}

\spnewtheorem{defn}[thm]{Definition}{\bf}{}

\title{Characterizations of weak R-duality and its application to Gabor frames \thanks{Himanshi Bansal (Inspire Code: IF190609) acknowledges the financial support of Department of Science and Technology. S. Arati acknowledges the financial support of National Board for Higher Mathematics, Department of Atomic Energy (Government of India).}}
\titlerunning{Characterizations of weak R-duality}
\author{Himanshi Bansal \and P. Devaraj* \and S. Arati }

\institute{*Corresponding author \at School of Mathematics, Indian Institute of Science Education and Research Thiruvananthapuram, Vithura, Thiruvananthapuram-695551.\\
\email{hbansal20@iisertvm.ac.in, devarajp@iisertvm.ac.in, aratishashi@iisertvm.ac.in}}

\date{Received: date / Accepted: date}
\maketitle

\begin{abstract}
Weak R-duals, a generalization of R-duals, were recently introduced; for which duality relations were established. In this paper, we consider the problem of characterizing a given frame sequence to be a weak R-dual of a given frame. Further, we apply these characterization results to the Gabor frame setting and prove that the weak R-duality of the adjoint Gabor system leads to the R-duality of the same, thereby indicating an approach to answer the famous problem of the adjoint Gabor system being an R-dual of a given Gabor frame.

\keywords{adjoint Gabor system \and frame \and Gabor frame \and R-dual \and weak R-dual.}

\subclass{42C15 \and 42C40 \and 41A58}
\end{abstract}

\section{\textbf{Introduction}}
The abstract notion of frames for Hilbert spaces was given by Duffin and Schaeffer \cite{duffin1952class} in 1952 while studying problems in nonharmonic Fourier series, although Gabor in 1946 had initiated this approach while decomposing signals in terms of modulates and translates of functions. After the historical work of Daubechies, Grossmann and Meyer \cite{daubechies1986painless} in 1986, frames gained more significance. Nowadays, frames are being extensively applied in numerous areas and among frames, Gabor frames are one of the most widely used ones. In fact, it has an edge over the Fourier transform in certain applications, such as speech and music signal processing, which involve time-frequency localization. One of the prominent and fundamental results in Gabor analysis is the Duality principle, due to Janssen \cite{janssen1994duality}, Daubechies, Landau and Landau \cite{daubechies1994gabor}, and Ron and Shen \cite{ron1997weyl}, which characterizes Gabor frames in $L^2(\mathbb{R})$ with lattice points $\{(mb,na):m,n \in \mathbb{Z}\},\ a,b>0$, in terms of Gabor systems with lattice points $\left\{\left(\frac{m}{a},\frac{n}{b}\right):m,n \in \mathbb{Z}\right\}$. More explicitly, denoting $E_x$ and $\mathcal{T}_x$, $x\in\mathbb{R}$ to be the modulation and translation operators on $L^2(\mathbb{R})$, given by $(E_x(f))(t):=e^{2\pi i xt}f(t)$ and $(\mathcal{T}_x(f))(t):=f(t-x)$ respectively, the Duality principle states that the Gabor system $\{E_{mb}\mathcal{T}_{na}f\}_{m,n \in \mathbb{Z}},\ a,b>0$, is a frame for $L^2(\mathbb{R})$  with bounds $A, B>0$ if and only if the adjoint Gabor system $\left\{\frac{1}{\sqrt{ab}}E_{\frac{m}{a}}\mathcal{T}_{\frac{n}{b}}f\right\}_{m,n \in \mathbb{Z}}$ is a Riesz sequence in $L^2(\mathbb{R})$ with the same bounds $A, B$.
\par
Casazza, Kutyniok and Lammers \cite{casazza2004duality} in 2004 considered the question of extending the above important principle to the abstract framework. Let $H$ denote an infinite dimensional separable Hilbert space and $I$ denote a countable index set. Casazza, et al. introduced the concept of an R-dual of a sequence in $H$, the definition of which is given below.
\begin{defn}\cite{casazza2004duality} Let $u=\{u_i\}_{i \in I}$, $v=\{v_i\}_{i \in I}$ be orthonormal bases for $H$ and $f=\{f_i\}_{i \in I}$ be a sequence in $H$ such that
$\sum\limits_{i \in I}|\langle f_i,u_j \rangle|^2 < \infty, \ \forall \ j \in I$.
Then, the sequence $w=\{w_j\}_{j \in I}$ given by
\begin{equation*}
w_j=\sum_{i \in I}\big\langle f_i,u_j \big\rangle v_i ,\ j \in I,
\end{equation*}
is said to be an R-dual of $f$ with respect to the orthonormal bases $u$ and $v$.
\end{defn}
Furthermore, they showed that the relations between a sequence and its R-dual bear a close resemblance to those between the Gabor systems $\{E_{mb}\mathcal{T}_{na}f\}_{m,n \in \mathbb{Z}}$ and $\left\{\frac{1}{\sqrt{ab}}E_{\frac{m}{a}}\mathcal{T}_{\frac{n}{b}}f\right\}_{m,n \in \mathbb{Z}}$. In this context, they posed the natural question towards generalizing the Duality principle for Gabor systems: `Given a Gabor frame $\{E_{mb}\mathcal{T}_{na}f\}$, is the adjoint system $\left\{\frac{1}{\sqrt{ab}}E_{\frac{m}{a}}\mathcal{T}_{\frac{n}{b}}f\right\}$ an R-dual of the Gabor frame?'. They answered this question affirmatively only in the case of tight Gabor frames and Gabor Riesz bases. However, Stoeva and Christensen in \cite{stoeva2015r,stoeva2016various} introduced the notion of R-duals of type \Romannum{2}, \Romannum{3} and \Romannum{4}, and showed that a sub-class of the R-duals of type \Romannum{3} generalizes the Duality principle and at the same time retains all the appealing properties of R-duals. Nevertheless, the original question of whether an R-dual will or will not generalize the Duality principle for Gabor systems is an interesting one that remains unanswered.
\par
On the other hand, the study on R-duals and its variants gained momentum over a period of time out of independent interest. Christensen, Kim and Kim \cite{christensen2011duality} in 2011 gave a characterization for a Riesz sequence to be an R-dual of a given frame, which was further modified by Chuang and Zhao \cite{chuang2015equivalent} in 2015. Several generalizations of the concept of R-duals have been developed over the past few years. For instance, generalized R-duals or g-R-duals for the so-called g-frames, which involve bounded linear operators, were studied in \cite{enayati2017duality,takhteh2017r,li2019characterizing}. Similar concepts, namely, Hilbert-Schmidt R-duality in the Hilbert-Schmidt frame theory and $F_a$-R-duality in $F_a$-frame theory were looked into by Dong and Li in \cite{dong2021duality} and Li and Hussain in \cite{li2021duality} respectively. Recently, in 2020, Li and Dong \cite{li2020class} defined another generalization of the R-dual called the weak R-dual, which makes use of Parseval frames instead of orthonormal bases. More precisely, the definition of a weak R-dual is as follows and we make use of the following notation for the same. For sequences $g=\{g_i\}_{i \in I}$ and $h=\{h_i\}_{i \in I}$ in $H$, we denote the matrix $\big(\big\langle g_i,h_j \big\rangle\big)_{i,j \in I}$ by $G(g,h)$, its transpose by $G(g,h)^t$ and the infinite identity matrix by $\mathcal{I}$.
\begin{defn} \cite{li2020class} Let $u=\{u_i\}_{i \in I}$, $v=\{v_i\}_{i \in I}$ be Parseval frames for $H$ and $f=\{f_i\}_{i \in I}$ be a sequence in $H$ such that
\begin{equation*}
\sum_{i \in I}|\langle f_i,u_j \rangle|^2 < \infty, \ \forall\ j \in I
\hspace{1em}and \hspace{1em}
(G(v,v)^t-\mathcal{I})G(f,u)=\bold{0}.
\end{equation*}
Then, the sequence $w=\{w_j\}_{j \in I}$ given by
\begin{equation*}
w_j=\sum_{i \in I}\big\langle f_i,u_j \big\rangle v_i ,\ j \in I,
\end{equation*}
is called a weak R-dual of $f$ with respect to the Parseval frames $u$ and $v$.
\end{defn}
Hence, an R-dual is always a weak R-dual but not vice-versa. Further, Li and Dong established that the weak R-dual of a frame (Riesz basis) is a frame sequence (frame) with the same bounds, which is quite different from the relation that exists in the framework of R-duals. The same authors in \cite{li2021g-duality} also considered weak g-R-duals and presented the duality relations even in this setting.
\par
In this paper, we consider the notion of weak R-duality of frames in an infinite dimensional separable Hilbert space $H$. We provide several characterizations for a given frame sequence to be a weak R-dual of a given frame. A weak R-dual in the real sense is one  wherein either or both of $u$ and $v$ are Parseval frames, but not orthonormal bases. Different necessary and sufficient conditions are proved for the various scenarios. Many of our results are also elucidated using examples. Further, conditions under which a weak R-dual produces an R-dual have been obtained. We then apply the theory developed so far to the context of Gabor frames and conclude that for a given Gabor frame, obtaining the adjoint Gabor system as a weak R-dual paves way to realizing it as an R-dual as well. It is also shown that the adjoint Gabor system does not exist as a weak R-dual in the true sense for Gabor Riesz basis. However, it is proved that it does, in the case of tight Gabor frames.
\par
We shall now provide the necessary background on frames needed for this paper. Let $I$ be a countable index set. A sequence of vectors $f=\{f_i\}_{i \in I}$ in $H$ is said to be a frame for $H$ if there exist $0<A\leq B<\infty$ such that
\begin{equation*}
A ||x||^2\leq \sum_{i \in I} \left|\langle x,f_i \rangle\right|^2\leq B ||x||^2,\ \forall\ x \in H.
\end{equation*}
The constants $A$ and $B$ are called the frame bounds of $f$. The sequence $f$ is said to be a frame sequence in $H$ if it is a frame for $\overline{span}\{f_i\}_{i \in I}$. If, in the above definition of a frame, only the right hand side inequality holds, then $f$ is said to be a Bessel sequence. If $A=B$ in the above inequality, then $f$ is said to be a tight frame and in particular, if $A=B=1$, then it is called a Parseval frame.
For a frame $f$, the bounded linear operator $T_f:\ell^2(I)\longrightarrow H$, given by $T_f\left(\{c_i\}\right)=\sum\limits_{i \in I} c_if_i$, is called the synthesis operator and its adjoint operator $T_f^*:H \longrightarrow \ell^2(I)$, given by $T_f^*(x)=\{\langle x,f_i\rangle\}_{i \in I}$, is called the analysis operator. The operator $S_f:H\longrightarrow H$ given by $S_f(x)=T_fT_f^*(x)=\sum\limits_{i \in I} \big \langle x,f_i \big \rangle f_i$ is called the frame operator of $f$. It is invertible and the sequence $S_f^{-1}f=\{S_f^{-1}f_i\}_{i \in I}$ is also a frame for $H$, which is called the canonical dual frame of $f$.
If $A ||c||^2\leq \left\|\sum\limits_{i \in I} c_if_i \right\|^2\leq B ||c||^2$, for all finite scalar sequences $c=\{c_i\}_{i \in I}$ and for some scalars $0<A\leq B<\infty$, then $\{f_i\}_{i \in I}$ is called a Riesz sequence in $H$. In addition, if $\overline{span}\{f_i\}_{i \in I}=H$, then $\{f_i\}_{i \in I}$ is called a Riesz basis for $H$. Without loss of generality, we shall assume that $I=\mathbb{N}$ in our proofs, in order to avoid cumbersome notation.
\par
We organize our paper as follows. The various necessary as well as sufficient conditions for weak R-duality are discussed in Section 2. These conditions make use of a particular sequence $y$, which we analyse in Section 3. In particular, we look into its completeness and frame properties. The connection between weak R-duality and R-duality as well as the study on the weak R-duality of Gabor frames are the contents of the last section.

\section{Characterization of weak R-duals}
In this section, we shall provide a few results on the characterization of weak R-duality. However, first we would like to emphasize that weak R-duality of frames does not reduce to R-duality in general, even though the definition involves Parseval frames satisfying a certain condition. In the following proposition, for a given frame $f$ and a sequence $u$ in $H$, we shall construct a Parseval frame $v$, which is not an orthonormal basis, satisfying the condition $(G(v,v)^t-\mathcal{I})G(f,u)=\bold{0}$, when $f$ is not a Riesz basis.
\begin{pro}\label{pro1}
Let $f=\{f_i\}_{i \in I}$ be a frame for $H$ but not a Riesz basis and $u=\{u_i\}_{i \in I}$ be any sequence in $H$. Then, there exists a Parseval frame $v=\{v_i\}_{i \in I}$, which is not an orthonormal basis, for $H$ satisfying $(G(v,v)^t-\mathcal{I})G(f,u)=\bold{0}$.
\end{pro}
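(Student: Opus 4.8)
The plan is to realize the desired Parseval frame $v$ through an orthogonal projection, exploiting the fact that $f$ is a frame but not a Riesz basis. First I would reinterpret the condition $(G(v,v)^t-\mathcal{I})G(f,u)=\mathbf{0}$ operator-theoretically. For any Parseval frame $v$ the synthesis operator satisfies $T_vT_v^*=I$, so $T_v^*$ is an isometry with closed range and the Gram operator $T_v^*T_v$ is precisely the orthogonal projection of $\ell^2(I)$ onto $\mathrm{ran}(T_v^*)$; a short index computation shows that its matrix is exactly $G(v,v)^t$. On the other hand, the $j$-th column of $G(f,u)$ is $\{\langle f_i,u_j\rangle\}_{i\in I}=T_f^*u_j$, which lies in $\ell^2(I)$ because the frame $f$ is Bessel. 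Hence the required identity is equivalent to the single demand that every column $T_f^*u_j$ of $G(f,u)$ belong to $\mathrm{ran}(T_v^*)$.

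Next I would pin down a suitable proper closed subspace to serve as $\mathrm{ran}(T_v^*)$. Set $M:=\mathrm{ran}(T_f^*)=(\ker T_f)^\perp$. Since $f$ is a frame, the lower frame bound gives $\|T_f^*x\|\ge\sqrt{A}\,\|x\|$, so $T_f^*$ is bounded below and injective; thus $M$ is closed and isometrically isomorphic to $H$, and in particular infinite-dimensional. Since $f$ is \emph{not} a Riesz basis, $T_f$ fails to be injective, i.e. $\ker T_f\neq\{0\}$, so $M\subsetneq\ell^2(I)$ is a proper subspace. By construction every column $T_f^*u_j$ of $G(f,u)$ lies in $M$. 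This subspace is the crux of the argument: its containing all the columns is what will force the product to vanish, while its properness is what will let $v$ escape being an orthonormal basis.

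I would then build $v$ by a Naimark-type construction. Let $\{e_i\}_{i\in I}$ be the standard orthonormal basis of $\ell^2(I)$, let $P$ be the orthogonal projection onto $M$, and fix a unitary $U\colon M\to H$ (available because both spaces are infinite-dimensional and separable). Define $v_i:=UPe_i$. For each $x\in H$ one computes $\langle x,v_i\rangle=\langle PU^*x,e_i\rangle$, so $\sum_i|\langle x,v_i\rangle|^2=\|PU^*x\|^2=\|U^*x\|^2=\|x\|^2$, using $U^*x\in M$; thus $v$ is a Parseval frame. Moreover $\langle v_i,v_j\rangle=\langle Pe_i,e_j\rangle$, which gives $G(v,v)^t=[P]$, the matrix of $P$ in the basis $\{e_i\}$. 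Because $M\neq\ell^2(I)$ we have $P\neq I$, hence $G(v,v)^t\neq\mathcal{I}$ and $v$ is \emph{not} an orthonormal basis. Finally, applying $G(v,v)^t-\mathcal{I}=[P-I]$ to the $j$-th column of $G(f,u)$ produces $(P-I)T_f^*u_j=0$, since $T_f^*u_j\in M=\mathrm{ran}(P)$; therefore $(G(v,v)^t-\mathcal{I})G(f,u)=\mathbf{0}$.

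The only routine points left to verify are the identity $G(v,v)^t=[P]$ and the fact that the infinite matrix product $(G(v,v)^t-\mathcal{I})G(f,u)$ genuinely represents the bounded operator $P-I$ acting columnwise, both of which follow from boundedness of $P$ together with square-summability of the columns. The single essential step—and the only place the hypotheses enter—is the observation that failing to be a Riesz basis forces $\ker T_f\neq\{0\}$ and hence $M$ proper; this is exactly what distinguishes the present construction from the orthonormal-basis (classical R-dual) case, where $\mathrm{ran}(T_f^*)=\ell^2(I)$ would leave no room for a non-orthonormal $v$.
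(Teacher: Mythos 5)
Your construction is, at bottom, the same projection idea that drives the paper's proof — realize $G(v,v)^t$ as the matrix of an orthogonal projection $P$ onto a proper closed subspace of $\ell^2(I)$ containing all columns of $G(f,u)$, and pull a Parseval frame back to $H$ through an isometry — but there is a genuine error in your identification of those columns, and it makes the final step fail over a complex Hilbert space. With the paper's convention $G(f,u)=\big(\langle f_i,u_j\rangle\big)_{i,j}$, the $j$-th column is $\{\langle f_i,u_j\rangle\}_{i\in I}$, which is the entrywise \emph{conjugate} of $T_f^*u_j=\{\langle u_j,f_i\rangle\}_{i\in I}$, not $T_f^*u_j$ itself. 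So the columns lie in the conjugated range $\overline{M}:=\{\overline{c}:c\in\mathrm{ran}(T_f^*)\}$, and there is no reason for them to lie in your $M=\mathrm{ran}(T_f^*)$: that subspace is complex-linear but in general not closed under entrywise conjugation. Concretely, let $\{z_k\}_{k\in\mathbb{N}}$ be an orthonormal basis of $H$ and take $f=\{z_1,\,iz_1,\,z_2,\,z_3,\dots\}$ (a frame with bounds $1,2$, not a Riesz basis) and $u_1=z_1$. Then $\mathrm{ran}(T_f^*)=\{c\in\ell^2:\,c_2=-ic_1\}$, while the first column of $G(f,u)$ is $(1,i,0,0,\dots)$, which satisfies $c_2=ic_1$; in fact this column is orthogonal to $M$, so $(P-I)$ applied to it gives its negative, and your $v$ fails $(G(v,v)^t-\mathcal{I})G(f,u)=\mathbf{0}$. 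The assertion ``by construction every column of $G(f,u)$ lies in $M$'' is therefore false as stated, and it is the load-bearing step of your argument.

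The repair is a one-line change: run your Naimark construction with $M$ replaced by $\overline{M}=(\mathrm{Ker}^*(T_f))^{\perp}$, where $\mathrm{Ker}^*(T_f)=\{\overline{c}:c\in\mathrm{Ker}(T_f)\}$ (the very notation the paper uses later, in the proof of Theorem 2.6). Coordinatewise conjugation is an isometric involution of $\ell^2(I)$, so $\overline{M}$ is closed, it contains every column of $G(f,u)$ by the computation above, and it is proper exactly when $\mathrm{Ker}(T_f)\neq\{0\}$, i.e.\ exactly when the frame $f$ is not a Riesz basis — so all your remaining steps (Parsevalness of $v_i=UPe_i$, the identity $G(v,v)^t=[P]$, non-orthonormality from $P\neq I$, the columnwise action of $P-I$) go through verbatim. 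It is worth noting that this missing conjugation is precisely what the paper's conjugate-linear operator $\widetilde{L}$ is for: since $\langle \widetilde{L}x,z\rangle=\langle \widetilde{L}z,x\rangle$, the choice $v_i=\widetilde{L}(S_f^{-1/2}f_i)$ gives $G(v,v)^t_{i,j}=\langle S_f^{-1/2}f_i,S_f^{-1/2}f_j\rangle$, which is the matrix of the projection onto the \emph{conjugated} range $\overline{M}$ rather than onto $M$; that twist is why the paper's verification closes while yours, without it, does not.
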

\begin{proof}
Define an operator $\widetilde{L}:H \longrightarrow H$ by $\widetilde{L}\left(\sum\limits_{i \in I}c_ih_i\right)=\sum\limits_{i \in I}\overline{c_i}h_i$, where $\{h_i\}_{i \in I}$ is an orthonormal basis of $H$ and $\{c_i\}_{i \in I} \in \ell^2(I)$. It can be easily seen that $\widetilde{L}$ is a conjugate-linear and surjective isometry. For $x, z \in H$, we get
\begin{align*}
\langle \widetilde{L}x,z \rangle&=\left\langle \widetilde{L}\left(\sum_{i \in I} \langle x,h_i \rangle h_i\right),z \right\rangle
=\left\langle \sum_{i \in I} \langle h_i,x \rangle h_i,z \right\rangle
=\sum_{i \in I} \langle h_i,x \rangle \langle h_i,z \rangle \\
&=\left\langle \sum_{i \in I} \langle h_i,z \rangle h_i,x \right\rangle
=\left\langle \widetilde{L}\left(\sum_{i \in I} \langle z,h_i \rangle h_i\right),x \right\rangle
=\langle \widetilde{L}z,x \rangle.
\end{align*}
Making use of the above relation, we have
\begin{align*}
\sum_{k \in I}\big\langle \widetilde{L}(S_f^{-1/2}f_k),\widetilde{L}(S_f^{-1/2}f_i) \big\rangle f_k&=\sum_{k \in I}\big\langle \widetilde{L}^2(S_f^{-1/2}f_i),S_f^{-1/2}f_k \big\rangle f_k\\
&=\sum_{k \in I}\big\langle S_f^{-1/2}f_i,S_f^{-1/2}f_k \big\rangle f_k\\
&=\sum_{k \in I}\big\langle f_i,S_f^{-1}f_k \big\rangle f_k\\
&=f_i.
\end{align*}
By taking the inner product with $u_j$ on both sides of the above equality, we obtain $(G(v,v)^t-\mathcal{I})G(f,u)=\bold{0}$, with $v_i=\widetilde{L}(S_f^{-1/2}f_i), i \in I$. Further, as $\{S_f^{-1/2}f_i\}_{i \in I}$ is a Parseval frame for $H$, we have
\begin{equation*}
\sum\limits_{i \in I}|\langle x,\widetilde{L}(S_f^{-1/2}f_i) \rangle|^2=\sum\limits_{i \in I}|\langle S_f^{-1/2}f_i,\widetilde{L}(x)\rangle|^2=\|\widetilde{L}(x)\|^2=\|x\|^2,\ \forall\ x \in H.
\end{equation*}
This proves that $\{\widetilde{L}(S_f^{-1/2}f_i)\}_{i \in I}$ is a Parseval frame for $H$. Moreover,
\begin{equation*}
\langle \widetilde{L}(S_f^{-1/2}f_i),\widetilde{L}(S_f^{-1/2}f_j)\rangle=\langle \widetilde{L}^2(S_f^{-1/2}f_j),S_f^{-1/2}f_i\rangle=\langle S_f^{-1/2}f_j,S_f^{-1/2}f_i\rangle=\langle S_f^{-1}f_j,f_i\rangle\neq \delta_{ji},
\end{equation*}
for some $i,j,$  as $f$ is not a Riesz basis. This, in turn, shows that $\{\widetilde{L}(S_f^{-1/2}f_i)\}_{i \in I}$ is not orthonormal.
\qed\end{proof}

\begin{rem}\label{rem}
If $f$ is indeed a Riesz basis and $u$ is a Parseval frame for $H$, then any Parseval frame $v$ satisfying $(G(v,v)^t-\mathcal{I})G(f,u)=\bold{0}$ will in fact be an orthonormal basis, for, if $v$ satisfies the above condition, then $\sum\limits_{k \in I}\langle v_k,v_i\rangle f_k-f_i$ is orthogonal to all $u_j$'s and hence to all $z \in H$. In particular, $\sum\limits_{k \in I}\langle v_k,v_i\rangle \langle f_k,S_f^{-1}f_j \rangle=\langle f_i,S_f^{-1}f_j\rangle$, which gives $\langle v_j,v_i\rangle=\delta_{ij}$, by the biorthogonality of $f$ and $S_f^{-1}f$. Moreover, if the weak R-dual $w$ of $f$ with respect to $u$ and $v$ is a Riesz basis as well, then it can be seen in \cite{li2020class} that $u$ is an orthonormal basis for $H$. Under these assumptions, the weak R-dual $w$ turns out to be an R-dual itself.
\end{rem}

We shall now prove a few characterization results for weak R-duality, which is the central focus of our paper. The following theorem gives a characterization in terms of the form of the Parseval frame $v$ involved in the definition of a weak R-dual. It in fact plays a significant role in the subsequent analysis.

\begin{thm}\label{thm1}
Let $w=\{w_j\}_{j \in I}$ be a frame sequence, $f=\{f_i\}_{i \in I}$ be a frame, $u=\{u_i\}_{i \in I}$ and $v=\{v_i\}_{i \in I}$ be Parseval frames for $H$. Then, $w$ is a weak R-dual of $f$ with respect to $u$ and $v$ if and only if the following conditions hold:
\begin{align}
&(i)\ (G(\widetilde{w},w)^t-\mathcal{I})G(u,f)=\bold{0}\hspace{21.7em}\label{cond1} \\
&(ii)\ v_i=y_i+x_i,\nonumber
\end{align}
where $\widetilde{w}=\{\widetilde{w}_j\}_{j \in I}$ is the canonical dual of $w$, $y=\{y_i\}_{i \in I}$ is the sequence in $\overline{span}\{w_j\}_{j \in I}$ defined by
\begin{equation}\label{def of ni}
y_i:=\sum_{k \in I}\big\langle u_k,f_i \big\rangle \widetilde{w}_k ,\ i \in I
\end{equation} and $x_i \in (span\{w_j\})^{\perp}$.
\end{thm}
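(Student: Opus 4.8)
The plan is to reduce both Gram-matrix conditions to pointwise scalar identities and to route the entire argument through the single relation $\langle v_i,w_j\rangle=\langle u_j,f_i\rangle$. Write $W:=\overline{span}\{w_j\}_{j\in I}$ and let $P_W$ be the orthogonal projection onto $W$. Expanding the matrix products entrywise, condition \eqref{cond1} unravels (using the definition \eqref{def of ni} of $y_i$) to the scalar identity $\langle y_i,w_j\rangle=\langle u_j,f_i\rangle$ for all $i,j$, while the defining weak R-dual condition $(G(v,v)^t-\mathcal{I})G(f,u)=\bold{0}$ unravels to $\sum_i\langle f_i,u_j\rangle\langle v_i,v_m\rangle=\langle f_m,u_j\rangle$ for all $j,m$. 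I would record two standing facts: the canonical-dual reconstruction $P_Wz=\sum_k\langle z,w_k\rangle\widetilde{w}_k$, valid for every $z\in H$ since $w$ is a frame sequence, and that $f$ being Bessel makes every series in play convergent, so the summability requirement in the definition of a weak R-dual holds automatically.

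For the forward implication, assume $w$ is a weak R-dual of $f$ with respect to $u$ and $v$. Pairing the defining identity $w_j=\sum_i\langle f_i,u_j\rangle v_i$ with $v_m$ and invoking the matrix condition gives the key relation $\langle v_i,w_j\rangle=\langle u_j,f_i\rangle$. Feeding this into the reconstruction formula yields $P_Wv_i=\sum_k\langle v_i,w_k\rangle\widetilde{w}_k=\sum_k\langle u_k,f_i\rangle\widetilde{w}_k=y_i$, so that $v_i=y_i+(I-P_W)v_i$ is exactly decomposition (ii) with $x_i:=(I-P_W)v_i\in W^{\perp}$. Condition (i), being equivalent to $\langle y_i,w_j\rangle=\langle u_j,f_i\rangle$, then follows immediately, since $\langle y_i,w_j\rangle=\langle P_Wv_i,w_j\rangle=\langle v_i,w_j\rangle=\langle u_j,f_i\rangle$ as $w_j\in W$.

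For the converse, start from (i) and (ii). Condition (ii) gives $\langle v_i,w_j\rangle=\langle y_i,w_j\rangle$ (the term $x_i$ drops out against $w_j\in W$), and condition (i) identifies this with $\langle u_j,f_i\rangle$, recovering the same key relation $\langle v_i,w_j\rangle=\langle u_j,f_i\rangle$, equivalently $\langle f_i,u_j\rangle=\langle w_j,y_i\rangle$. I then analyse $w_j':=\sum_i\langle f_i,u_j\rangle v_i$ through its two orthogonal components. Its $W$-component is $P_Ww_j'=\sum_i\langle f_i,u_j\rangle y_i=\sum_i\langle w_j,y_i\rangle y_i=w_j$, because projecting the Parseval frame $v$ onto $W$ makes $\{y_i\}=\{P_Wv_i\}$ a Parseval frame for $W$ and $w_j\in W$. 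Once the $W^{\perp}$-component $\sum_i\langle f_i,u_j\rangle x_i$ is shown to vanish, we conclude $w_j'=w_j$, which is the defining identity; the matrix condition of the definition then follows by pairing $w_j=\sum_i\langle f_i,u_j\rangle v_i$ with each $v_m$ and using $\langle w_j,v_m\rangle=\langle f_m,u_j\rangle$.

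The main obstacle is precisely this vanishing of the $W^{\perp}$-component, and the crux is a cross-orthogonality between the two pieces of the Parseval frame $v$. Polarising the Parseval resolution of the identity $\langle a,b\rangle=\sum_i\langle a,v_i\rangle\langle v_i,b\rangle$ for $a\in W$ and $b\in W^{\perp}$ gives $\sum_i\langle a,y_i\rangle\langle x_i,b\rangle=0$, because $\langle a,v_i\rangle=\langle a,y_i\rangle$, $\langle v_i,b\rangle=\langle x_i,b\rangle$, and $\langle a,b\rangle=0$. Taking $a=w_j$ shows that $\sum_i\langle w_j,y_i\rangle x_i=\sum_i\langle f_i,u_j\rangle x_i$ is orthogonal to all of $W^{\perp}$; as it already lies in $W^{\perp}$, it must be $0$. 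This is the one genuinely non-formal input — everything else is entrywise bookkeeping with the Gram matrices and the two reconstruction formulas — and it is exactly where the hypothesis that $v$ is a Parseval frame for all of $H$, rather than merely on $W$, does the real work.
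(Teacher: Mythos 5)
Your proposal is correct and takes essentially the same route as the paper's proof: the same pivotal relation $\langle v_i,w_j\rangle=\langle u_j,f_i\rangle$, the same decomposition of $v_i$ into $y_i=P_Wv_i$ plus a component in $(span\{w_j\})^{\perp}$ with $y$ a Parseval frame for $\overline{span}\{w_j\}$, and the same crucial vanishing of $\sum_{i}\langle w_j,y_i\rangle x_i$ in the converse. Your only deviation is cosmetic: you obtain the cross-orthogonality $\sum_{i}\langle a,y_i\rangle\langle x_i,b\rangle=0$ directly from the Parseval identity $\langle a,b\rangle=\sum_{i}\langle a,v_i\rangle\langle v_i,b\rangle$ (the frame operator of $v$ being the identity), whereas the paper reaches the identical fact by expanding $\|z\|^2=\sum_{i}|\langle z,v_i\rangle|^2$ for $z=z_1+z_2$ and $z=iz_1+z_2$ and extracting real and imaginary parts, i.e.\ by carrying out the polarization by hand.
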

\begin{proof}
Let us assume that $w$ is a weak R-dual of $f$ with respect to $u$ and $v$. In other words, we have $w_j=\sum\limits_{i \in I}\big\langle f_i,u_j \big\rangle v_i ,\ j \in I$ and $\sum\limits_{i \in I}\langle v_i,v_k \rangle \langle f_i,u_j \rangle=\langle f_k,u_j \rangle,\ \forall\ k,j \in I$. From these two expressions, we get $\langle f_k,u_j \rangle=\langle w_j,v_k \rangle$. Further, for $i,j \in I$ we obtain,
\begin{align*}
\sum\limits_{k \in I}\big\langle \widetilde{w}_k,w_j \big\rangle \big\langle u_k,f_i \big\rangle&=\sum\limits_{k \in I}\big\langle \widetilde{w}_k,w_j \big\rangle \langle v_i,w_k \rangle
=\left\langle v_i,\sum\limits_{k \in I}\big\langle w_j,\widetilde{w}_k \big\rangle w_k \right\rangle
=\langle v_i,w_j \rangle
=\langle u_j,f_i \rangle,
\end{align*} which gives that $(G(\widetilde{w},w)^t-\mathcal{I})G(u,f)=\bold{0}$. Also, using the expression of $y_i$, it can be rewritten as $\langle y_i,w_j \rangle=\langle u_j,f_i \rangle$ and hence $\langle w_j,y_i \rangle=\langle w_j,v_i\rangle,\ \forall\ j \in I$. Taking $x_i:=v_i-y_i$, we then have $x_i \in (span\{w_j\})^{\perp}$.
\par
Conversely, suppose that (\ref{cond1}) holds and $v_i=y_i+x_i$, where $y_i$ is given by (\ref{def of ni}) and $x_i \in (span\{w_j\})^{\perp}$. From (\ref{cond1}), it follows that $\sum\limits_{k \in I}\big\langle \widetilde{w}_k,w_i \big\rangle \big\langle u_k,f_j \big\rangle=\big\langle u_i,f_j \big\rangle$ and from (\ref{def of ni}), we get $\sum\limits_{k \in I}\big\langle u_k,f_j \big\rangle \big\langle \widetilde{w}_k,w_i \big\rangle=\big\langle y_j,w_i \big\rangle.$ So,
\begin{equation}\label{eq6}
\big\langle f_i,u_j \big\rangle=\big\langle w_j,y_i \big\rangle.
\end{equation}
Using (\ref{eq6}), for $j \in I$, we have
\begin{align*}
\sum_{i \in I}\langle f_i,u_j\rangle v_i&=\sum_{i \in I}\langle w_j,y_i \rangle (y_i+x_i)
=\sum_{i \in I}\langle w_j,y_i \rangle y_i+\sum_{i \in I}\langle w_j,y_i \rangle x_i
=w_j+\sum_{i \in I}\langle w_j,y_i \rangle x_i,
\end{align*} as $y$, being the orthogonal projection of the Parseval frame $v$ onto $\overline{span}\{w_j\}_{j \in I}$, is a Parseval frame for $\overline{span}\{w_j\}_{j \in I}$. Now, we will prove that $\sum\limits_{i \in I}\langle w_j,y_i \rangle x_i=0$. For $z=z_1+z_2 \in H$, with $z_1 \in \overline{span}\{w_j\}_{j \in I}$ and $z_2 \in (span\{w_j\})^{\perp}$, consider
\begin{align*}
\|z\|^2&=\sum_{i \in I}|\langle z,v_i \rangle|^2
=\sum_{i \in I}|\langle z_1+z_2,y_i+x_i \rangle|^2
=\sum_{i \in I}|\langle z_1,y_i \rangle+ \langle z_2,x_i \rangle|^2\\
&=\sum_{i \in I}|\langle z_1,y_i \rangle|^2+\sum_{i \in I}|\langle z_2,x_i \rangle|^2+\sum_{i \in I}\langle z_1,y_i \rangle \overline{\langle z_2,x_i \rangle}+\sum_{i \in I}\overline{\langle z_1,y_i \rangle} \langle z_2,x_i \rangle\\
&=\|z_1\|^2+\|z_2\|^2+2Re\left(\sum_{i \in I}\langle z_1,y_i \rangle \overline{\langle z_2,x_i \rangle}\right)\\
&=\|z\|^2+2Re\left(\sum_{i \in I}\langle z_1,y_i \rangle \overline{\langle z_2,x_i \rangle}\right),
\end{align*} which implies that $Re\left(\sum\limits_{i \in I}\langle z_1,y_i \rangle \overline{\langle z_2,x_i \rangle}\right)=0$. Similarly, we get $Im\left(\sum\limits_{i \in I}\langle z_1,y_i \rangle \overline{\langle z_2,x_i \rangle}\right)=0$, by taking $z=iz_1+z_2$ in the above computation. Combining both of them, we obtain $\sum\limits_{i \in I}\langle z_1,y_i \rangle \overline{\langle z_2,x_i \rangle}=0, \forall\ z_1 \in \overline{span}\{w_j\}_{j \in I}$ and $z_2 \in (span\{w_j\})^{\perp}$, from which it follows that $\sum\limits_{i \in I}\langle z_1,y_i \rangle x_i=0, \forall\ z_1 \in \overline{span}\{w_j\}_{j \in I}$. Therefore, $w_j=\sum\limits_{i \in I}\langle f_i,u_j\rangle v_i,\ j \in I$.
Further, for $i,j \in I$, consider
\begin{align*}
\sum_{k \in I}\langle v_k,v_i\rangle \langle f_k,u_j\rangle&=\sum_{k \in I}\langle y_k+x_k,y_i+x_i \rangle \langle w_j,y_k\rangle=\sum_{k \in I}\langle y_k,y_i \rangle \langle w_j,y_k\rangle+\sum_{k \in I}\langle x_k,x_i \rangle \langle w_j,y_k\rangle\\
&=\left\langle\sum_{k \in I}\langle w_j,y_k\rangle y_k,y_i \right\rangle+\left\langle\sum_{k \in I}\langle w_j,y_k\rangle x_k,x_i \right\rangle=\langle w_j,y_i\rangle=\langle f_i,u_j\rangle,
\end{align*} which gives $(G(v,v)^t-\mathcal{I})G(f,u)=\bold{0}$, thereby proving that $w$ is a weak R-dual of $f$ with respect to $u$ and $v$.
\qed\end{proof}

In certain circumstances, the Parseval frame $v$ can be written explicitly, as discussed in the corollary below, the proof of which is obvious.
\begin{cor}\label{cor1}
Let $f, u$ and $v$ be as in the hypothesis of the above theorem. Suppose $w$ is a weak R-dual of $f$ with respect to $u$ and $v$. If $w$ is a frame for $H$, then $v_i=\sum\limits_{k \in I}\big\langle u_k,f_i \big\rangle \widetilde{w}_k ,\ i \in I$.
\end{cor}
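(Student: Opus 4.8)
The plan is to invoke Theorem~\ref{thm1} directly and to observe that the extra hypothesis collapses the orthogonal-complement term appearing in its statement. Since $w$ is assumed to be a weak R-dual of $f$ with respect to $u$ and $v$, and since $w$, $f$, $u$, $v$ satisfy the standing hypotheses of Theorem~\ref{thm1}, condition $(ii)$ of that theorem furnishes the decomposition $v_i = y_i + x_i$ for each $i \in I$, where $y_i = \sum_{k \in I}\langle u_k, f_i \rangle \widetilde{w}_k$ as in~(\ref{def of ni}) lies in $\overline{span}\{w_j\}_{j \in I}$, and $x_i \in (span\{w_j\})^{\perp}$.

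First I would bring in the one additional assumption distinguishing this corollary from the theorem, namely that $w$ is a frame for the whole space $H$ rather than merely a frame sequence. A frame for $H$ is in particular complete in $H$, so $\overline{span}\{w_j\}_{j \in I} = H$, whence $(span\{w_j\})^{\perp} = \{0\}$. Each $x_i$, being a member of this trivial subspace, must therefore vanish.

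Substituting $x_i = 0$ into the decomposition supplied by Theorem~\ref{thm1} leaves $v_i = y_i = \sum_{k \in I}\langle u_k, f_i \rangle \widetilde{w}_k$ for every $i \in I$, which is precisely the claimed explicit formula for the Parseval frame $v$. There is essentially no obstacle to surmount here: the entire content of the argument is the remark that the completeness of $w$ trivializes the orthogonal complement of $span\{w_j\}_{j \in I}$, so that the ``free'' component $x_i$ permitted by the general characterization is forced to be zero and $v$ becomes uniquely determined by $u$, $f$ and the canonical dual $\widetilde{w}$.
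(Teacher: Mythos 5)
Your proposal is correct and matches the paper exactly: the paper declares the proof of this corollary ``obvious,'' the intended argument being precisely yours, namely that the forward direction of Theorem \ref{thm1} gives $v_i=y_i+x_i$ with $x_i\in(span\{w_j\})^{\perp}$, and the assumption that $w$ is a frame for $H$ makes $\overline{span}\{w_j\}_{j\in I}=H$, so $(span\{w_j\})^{\perp}=\{0\}$ and $v_i=y_i$. No gaps and no divergence from the paper's route.
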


The following theorem provides necessary and sufficient conditions for the existence of a weak R-dual with respect to a Parseval frame and an orthonormal basis.
\begin{thm}\label{thm2} Let $w=\{w_j\}_{j \in I}$ be a frame sequence in $H$, $f=\{f_i\}_{i \in I}$ be a frame for $H$ and $u=\{u_i\}_{i \in I}$ be a Parseval frame for $H$. Then, there exists an orthonormal basis $v=\{v_i\}_{i \in I}$ such that $w$ is a weak R-dual of $f$ with respect to the Parseval frame $u$ and the orthonormal basis $v$ if and only if the sequence $\{y_i\}_{i \in I}$, given by (\ref{def of ni}), is a Parseval frame for $\overline{span}\{w_j\}_{j \in I}$, $dim(Ker(T_y))=dim((span\{w_j\})^\perp)$ and (\ref{cond1}) holds true.
\end{thm}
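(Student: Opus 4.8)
The plan is to build the argument on top of Theorem \ref{thm1}, which already characterizes weak R-duality in terms of condition (\ref{cond1}) together with the decomposition $v_i = y_i + x_i$, $x_i \in (span\{w_j\})^\perp$. Writing $W := \overline{span}\{w_j\}_{j \in I}$, the task here is to pin down exactly when the Parseval frame $v$ produced by Theorem \ref{thm1} can be chosen to be an orthonormal basis, and the three listed conditions should be precisely the obstructions to doing so.

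For the necessity, I would start from an orthonormal basis $v$ realizing $w$ as a weak R-dual. Theorem \ref{thm1} immediately yields (\ref{cond1}) and the decomposition $v_i = y_i + x_i$ with $y_i$ as in (\ref{def of ni}) and $x_i \in W^\perp$. Since an orthonormal basis is in particular a Parseval frame for $H$ and $y$ is its orthogonal projection onto $W$, the sequence $y$ is a Parseval frame for $W$, giving the first condition. For the dimension condition I would pass to synthesis operators: writing $T_v = T_y + T_x$ with $\mathrm{Range}(T_y) \subseteq W$ and $\mathrm{Range}(T_x) \subseteq W^\perp$, the orthogonality $W \perp W^\perp$ forces the cross terms $T_y^* T_x$ and $T_x^* T_y$ to vanish, so $T_v^* T_v = T_y^* T_y + T_x^* T_x$. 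Because $v$ is an orthonormal basis, $T_v$ is unitary; hence $T_x^* T_x = \mathcal{I} - T_y^* T_y$, and since $y$ is Parseval, $T_y^* T_y$ is the orthogonal projection onto $(\mathrm{Ker}\,T_y)^\perp$. This makes $T_x$ a partial isometry with initial space $\mathrm{Ker}(T_y)$, while surjectivity of $T_v$ forces $\mathrm{Range}(T_x) = W^\perp$; equating the dimensions of the initial and final spaces of this partial isometry gives $\dim(\mathrm{Ker}(T_y)) = \dim(W^\perp)$.

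For the sufficiency, I would reverse this construction. Assuming $y$ is Parseval for $W$, the Gram operator $T_y^* T_y$ is an orthogonal projection, so $P^\perp := \mathcal{I} - T_y^* T_y$ is the orthogonal projection of $\ell^2(I)$ onto $\mathrm{Ker}(T_y)$. The dimension hypothesis lets me fix a unitary $U$ from $\mathrm{Ker}(T_y)$ onto $W^\perp$, and I would set $x_i := U P^\perp e_i$, with $\{e_i\}$ the standard basis of $\ell^2(I)$, so that $x_i \in W^\perp$ and the synthesis operator $T_x = U P^\perp$ satisfies $T_x^* T_x = P^\perp$. A direct Gram-matrix computation then shows $\langle y_i, y_j\rangle + \langle x_i, x_j\rangle = \delta_{ij}$, i.e. $v_i := y_i + x_i$ is orthonormal; and computing $T_v T_v^* = T_y T_y^* + T_x T_x^*$, the cross terms again vanishing, gives the projection onto $W$ plus the projection onto $W^\perp$, namely $\mathcal{I}$, so $v$ is complete and hence an orthonormal basis. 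With (\ref{cond1}) assumed and $v_i = y_i + x_i$, $x_i \in W^\perp$ in hand, Theorem \ref{thm1} delivers that $w$ is a weak R-dual of $f$ with respect to $u$ and the orthonormal basis $v$.

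The main obstacle I anticipate is the bookkeeping that turns the dimension hypothesis into completeness of $v$: the inequality $\dim(\mathrm{Ker}(T_y)) \le \dim(W^\perp)$ is what is needed merely to place the correcting vectors $x_i$ inside $W^\perp$, but the surjectivity of $U$, equivalently the equality of dimensions, is exactly what upgrades the orthonormal system $v$ from a Parseval frame to a genuine orthonormal basis via $T_v T_v^* = \mathcal{I}$. Keeping the partial-isometry/projection correspondence straight, and verifying that all cross terms between the $W$- and $W^\perp$-components vanish, is where the care is required.
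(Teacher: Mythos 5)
Your proposal is correct, and it follows the paper's overall skeleton (reduce everything to Theorem \ref{thm1} and analyze the decomposition $v_i=y_i+x_i$), but it diverges in how the two key steps are executed. In the necessity direction your argument is the paper's in operator-theoretic clothing: the paper simply observes that $\sum_{i}c_iy_i=P\left(\sum_{i}c_iv_i\right)$, so that $\{c_i\}\in Ker(T_y)$ if and only if $\sum_{i}c_iv_i\in(span\{w_j\})^\perp$, and concludes the dimension equality from the unitarity of $T_v$; your identities $T_v^*T_v=T_y^*T_y+T_x^*T_x$ and $T_x^*T_x=\mathcal{I}-T_y^*T_y$ encode exactly this fact, since $T_x$ agrees with $T_v$ on $Ker(T_y)$, and your partial-isometry bookkeeping does check out ($T_y^*T_y$ is indeed the projection onto $Ker(T_y)^\perp$ because $y$ is Parseval for $\overline{span}\{w_j\}$ with $T_yT_y^*=P$, and the cross terms vanish because $T_y^*$ annihilates $(span\{w_j\})^\perp$ while $Range(T_x)\subseteq(span\{w_j\})^\perp$). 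The genuine difference is in the sufficiency direction: the paper invokes Theorem 2 of \cite{chuang2015equivalent} as a black box to produce an orthonormal basis $v$ with $Pv_i=y_i$, whereas you construct $v$ explicitly by choosing a unitary $U$ from $Ker(T_y)$ onto $(span\{w_j\})^\perp$ (which exists precisely by the dimension hypothesis), setting $x_i=UP^\perp e_i$ with $P^\perp=\mathcal{I}-T_y^*T_y$, and verifying both $T_v^*T_v=\mathcal{I}$ (orthonormality, via the Gram computation $\langle y_i,y_j\rangle+\langle x_i,x_j\rangle=\delta_{ij}$) and $T_vT_v^*=P+UU^*=\mathcal{I}$ (completeness). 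In effect you have inlined a proof of the cited Chuang--Zhao result; this makes the argument self-contained where the paper is shorter but dependent on an external lemma, and your closing remark correctly identifies the role of each hypothesis: the inequality $dim(Ker(T_y))\leq dim((span\{w_j\})^\perp)$ would suffice to make $v$ orthonormal, while the equality is what makes $U$ (hence $T_v$) surjective and so upgrades $v$ to a basis. The only points left implicit, both harmless, are that $T_x$ is bounded (immediate from $T_x=(I-P)T_v$ in the necessity direction, and from $T_x=UP^\perp$ in the sufficiency direction) and that the dimensions are Hilbert-space dimensions so that equality genuinely yields a unitary in the infinite-dimensional case.
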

\begin{proof}
Suppose there exists an orthonormal basis $v=\{v_i\}_{i \in I}$ such that $w$ is a weak R-dual of $f$ with respect to $u$ and $v$. Then, by Theorem \ref{thm1}, (\ref{cond1}) holds and $v_i=y_i+x_i$, with $y_i$ as defined in (\ref{def of ni}) and $x_i \in (span\{w_j\})^\perp$. For $g \in \overline{span}\{w_j\}$,
\begin{equation*}
\sum_{i \in I}\big|\langle g,y_i \rangle\big|^2=\sum_{i \in I}\big|\langle g,v_i \rangle\big|^2=\big\|g\big\|^2,
\end{equation*}
thereby proving that $\{y_i\}_{i \in I}$ is a Parseval frame for $\overline{span}\{w_j\}$. Further, taking $P$ to be the orthogonal projection of $H$ onto $\overline{span}\{w_j\}$, we get $\sum\limits_{i \in I}c_iy_i=\sum\limits_{i \in I}c_iPv_i=P\left(\sum\limits_{i \in I}c_iv_i\right)$, which implies that $\{c_i\}_{i \in I} \in Ker(T_y)$ if and only if $\sum\limits_{i \in I}c_iv_i \in (span\{w_j\})^\perp$. Hence, $dim(Ker(T_y))=dim((span\{w_j\})^\perp)$.
\par
Conversely, let $\{y_i\}$ be a Parseval frame for $\overline{span}\{w_j\}$, $dim(Ker(T_y))=dim((span\{w_j\})^\perp)$ and (\ref{cond1}) hold true. Then, by Theorem $2$ of \cite{chuang2015equivalent}, there exists an orthonormal basis $v=\{v_i\}_{i \in I}$ for $H$ such that $Pv_i=y_i$, where $P$ is the orthogonal projection of $H$ onto $\overline{span}\{w_j\}$. In other words, we may write $v_i=y_i+x_i$, where $x_i \in (span\{w_j\})^\perp$. Hence, by Theorem \ref{thm1}, $w$ is a weak R-dual of $f$ with respect to the Parseval frame $u$ and the orthonormal basis $v$, thereby proving the theorem.
\qed\end{proof}

Theorem \ref{thm2} shows that one of the necessary conditions for weak R-duality with respect to a Parseval frame and an orthonormal basis is $dim((span\{w_j\})^\perp)=dim(Ker(T_y))$. In the case when the weak R-dual is considered with respect to two Parseval frames say $u$ and $v$, with $v$ not being an orthonormal basis, this condition on dimension need not hold. This can be seen from the theorem below.
\begin{thm}\label{thm4} Let $f=\{f_i\}_{i \in I}$ be a frame, $u=\{u_i\}_{i \in I}$ be a Parseval frame and $v=\{v_i\}_{i \in I}$ be a Parseval frame but not an orthonormal basis for $H$. Suppose $w=\{w_j\}_{j \in I}$ is a weak R-dual of $f$ with respect to $u$ and $v$. Then,
\begin{enumerate}
\item[(i)] $dim((span\{w_j\})^\perp)\leq dim(Ker(T_y))$.
\item[(ii)] If $dim((span\{w_j\})^\perp)<\infty$, then $dim((span\{w_j\})^\perp)<dim(Ker(T_y))$, where $y=\{y_i\}_{i \in I}$ is as defined in (\ref{def of ni}).
\end{enumerate}
\end{thm}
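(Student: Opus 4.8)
The plan is to exploit the decomposition of $v$ furnished by Theorem \ref{thm1}. Since $w$ is a weak R-dual of $f$ with respect to $u$ and $v$, that theorem gives $v_i=y_i+x_i$ with $y_i\in\overline{span}\{w_j\}$ as in (\ref{def of ni}) and $x_i\in(span\{w_j\})^{\perp}=:M$. Writing $P$ for the orthogonal projection of $H$ onto $\overline{span}\{w_j\}$ and $Q=I-P$, we have $y_i=Pv_i$ and $x_i=Qv_i$; because $v$ is a Parseval frame for $H$, the projected family $\{x_i\}_{i\in I}$ is a Parseval frame for the closed subspace $M$, so its synthesis operator $T_x:\ell^2(I)\to M$ maps onto $M$.

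The crux is a sharper surjectivity statement: I would show that $T_x$ remains surjective onto $M$ even after being restricted to $Ker(T_y)$. Given $m\in M$, set $c_i:=\langle m,x_i\rangle$; then $\{c_i\}\in\ell^2(I)$ and $T_x(\{c_i\})=m$ by the Parseval reproducing formula for $\{x_i\}$ in $M$. The key computation is that this coefficient sequence already lies in $Ker(T_y)$: using $\langle m,x_i\rangle=\langle m,Qv_i\rangle=\langle m,v_i\rangle$ (as $m\in M=Ran(Q)$) together with $y_i=Pv_i$, one obtains $\sum_{i\in I}\langle m,x_i\rangle y_i=P\big(\sum_{i\in I}\langle m,v_i\rangle v_i\big)=Pm=0$, the last equality because $m\perp\overline{span}\{w_j\}$. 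Thus $A:=T_x|_{Ker(T_y)}:Ker(T_y)\to M$ is a bounded surjection, and surjectivity of $A$ immediately yields $\dim(M)\le\dim(Ker(T_y))$, which is part (i).

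For part (ii) I would identify the kernel of $A$ and bring in the hypothesis that $v$ is not an orthonormal basis. Since $T_v=T_y+T_x$ splits orthogonally (the range of $T_y$ lies in $\overline{span}\{w_j\}$ and that of $T_x$ in $M$), one has $Ker(T_v)=Ker(T_y)\cap Ker(T_x)=Ker(A)$. Because $v$ is a Parseval frame that is not an orthonormal basis, its analysis operator $T_v^{*}$ is a non-surjective isometry, whence $Ker(T_v)=(Ran(T_v^{*}))^{\perp}\neq\{0\}$. Decomposing $Ker(T_y)=Ker(A)\oplus(Ker(A))^{\perp}$ inside $Ker(T_y)$, the restriction of $A$ to $(Ker(A))^{\perp}$ is a continuous bijection onto $M$, hence an isomorphism by the open mapping theorem, so $\dim((Ker(A))^{\perp})=\dim(M)$. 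Therefore $\dim(Ker(T_y))=\dim(M)+\dim(Ker(T_v))$.

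The main obstacle is precisely the restricted-surjectivity lemma of the second paragraph, namely verifying that the canonical dual coefficients of $m$ in $M$ automatically annihilate $T_y$; once that identity is secured, the rest is bookkeeping with Hilbert-space dimensions. The one point needing care is the infinite-dimensional dimension count in (ii): the clean route is the orthogonal splitting above rather than a naive rank--nullity argument, because the hypothesis $\dim(M)<\infty$ is exactly what makes $\dim(Ker(T_y))=\dim(M)+\dim(Ker(T_v))$ a genuine numerical addition, so that $\dim(Ker(T_v))\ge 1$ forces the strict inequality $\dim((span\{w_j\})^{\perp})<\dim(Ker(T_y))$.
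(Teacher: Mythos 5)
Your proof is correct, but it takes a genuinely different route from the paper's. The paper proves (i) by invoking Lemma 6 of \cite{li2020class} to build a linear isometry $\theta:(span\{w_j\})^\perp\longrightarrow Ker^*(T_f)$, $\theta(x)=\{\langle x,v_i\rangle\}_{i\in I}$, and then establishing the identity $Ker(T_y)=Ker^*(T_f)$ through iterated-series manipulations justified by Swartz's theorem \cite{swartz1992iterated}; for (ii) it argues by contradiction that $\theta$ cannot be surjective, since surjectivity together with $(G(v,v)^t-\mathcal{I})G(f,u)=\bold{0}$ would force $\langle v_j,v_i\rangle=\delta_{ji}$, i.e.\ $v$ orthonormal. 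You instead stay entirely inside the orthogonal splitting $v_i=Pv_i+Qv_i=y_i+x_i$ furnished by Theorem \ref{thm1}, and your key lemma --- that for $m\in(span\{w_j\})^\perp$ the canonical coefficients $\{\langle m,x_i\rangle\}$ already lie in $Ker(T_y)$, because $\sum_{i\in I}\langle m,v_i\rangle Pv_i=Pm=0$ by the Parseval reproduction of $v$ --- makes $A=T_x|_{Ker(T_y)}$ a bounded surjection onto $(span\{w_j\})^\perp$ with $Ker(A)=Ker(T_y)\cap Ker(T_x)=Ker(T_v)$, since the two synthesis ranges are orthogonal. This buys you something the paper does not state: the exact cardinal identity $\dim(Ker(T_y))=\dim((span\{w_j\})^\perp)+\dim(Ker(T_v))$, from which (i) and (ii) both drop out at once, the non-orthonormality of $v$ entering cleanly as $Ker(T_v)\neq\{0\}$ (a Parseval frame with injective synthesis operator is an orthonormal basis) rather than through a contradiction. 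You also avoid the paper's separate treatment of the degenerate case $\dim((span\{w_j\})^\perp)=0$ via Corollary \ref{cor1}, and you avoid the double-series interchanges altogether. What the paper's route buys in exchange is the structural fact $Ker(T_y)=Ker^*(T_f)$ tying $y$ to the frame $f$ itself, which is reused later (the Gabor application in Corollary \ref{cor} rests on comparing $Ker(T_y)$ with $Ker(T_f)$), so your argument, while sharper here, would not by itself replace that identity elsewhere in the paper. All the analytic details you flag check out: $y$ and $x$ are Bessel (being projections of the Parseval frame $v$), so $T_y$, $T_x$ are bounded and the kernels closed; the restriction of $A$ to the relative orthocomplement of $Ker(A)$ in $Ker(T_y)$ is a continuous bijection, hence an isomorphism by the open mapping theorem; and the hypothesis $\dim((span\{w_j\})^\perp)<\infty$ is exactly what turns the cardinal identity into genuine arithmetic forcing strict inequality in (ii).
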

\begin{proof}
The sequence $w$, being a weak R-dual of a frame, is a frame sequence. Suppose the dimension of $(span\{w_j\})^\perp$ is 0. Then, (i) is obviously true. Also, by Corollary \ref{cor1}, we have $v=y$. Therefore, $y$ is a Parseval frame for $H$. If $dim(Ker(T_y))=0$, then $y$ is an orthonormal basis, which contradicts the assumption on $v$. Thus, (ii) is proved when $dim((span\{w_j\})^\perp)=0$. So, we shall now assume that $dim((span\{w_j\})^\perp)>0$.
\par
We denote $Ker^*(T_f)$ to be the set $\{\overline{\bold{c}}: \bold{c} \in Ker(T_f)\}$. By Lemma $6$ in \cite{li2020class} we have, $x \in (span\{w_j\})^\perp$ if and only if $\{\langle x,v_i \rangle \}_{i \in I} \in Ker^*(T_f)$. So, we may define a map $\theta:(span\{w_j\})^\perp\\ \longrightarrow Ker^*(T_f)$ by $\theta(x)=\{\langle x,v_i \rangle \}_{i \in I}$. Clearly, the map is a linear isometry, from which we may infer that $dim((span\{w_j\})^\perp)\leq dim(Ker^*(T_f))$. Now, by using the definition of $y$ we have,
\begin{equation}\label{eq4}
\sum\limits_{i \in I}c_iy_i=\sum\limits_{i \in I}c_i\left(\sum\limits_{k \in I}\big\langle u_k,f_i \big\rangle \widetilde{w}_k\right)=\sum\limits_{k \in I}\left\langle u_k,\sum\limits_{i \in I}\overline{c_i}f_i \right\rangle \widetilde{w}_k.
\end{equation}
The change in the order of summation in the above line and in the subsequent lines of the proof is possible by applying Theorem 1 of \cite{swartz1992iterated}. In fact, taking $\{i_l\}_{l \in I}$ to be an increasing sequence of positive integers, we can obtain the convergence of $\sum\limits_{k \in I}\left\langle u_k,\sum\limits_{l \in I}\overline{c_{i_l}}f_{i_l} \right\rangle\widetilde{w}_k$, by showing that $\left\{\left\langle u_k,\sum\limits_{l \in I}\overline{c_{i_l}}f_{i_l} \right\rangle\right\}_{k \in I}$ is in $\ell^2(I)$, which can be seen as follows. Consider,
\begin{align*}
\left(\sum_{k \in I}\left|\left\langle u_k,\sum_{l \in I}\overline{c_{i_l}}f_{i_l}\right\rangle\right|^2\right)^{\frac{1}{2}}&=\left\|\sum_{l \in I}\overline{c_{i_l}}f_{i_l}\right\|=\sup_{\|x\|=1}\left|\left\langle x,\sum_{l \in I}\overline{c_{i_l}}f_{i_l}\right\rangle \right|
=\sup_{\|x\|=1}\left|\sum_{l \in I}c_{i_l}\langle x,f_{i_l}\rangle \right| \\
&\leq \sup_{\|x\|=1}\left(\sum_{l \in I}|c_{i_l}|^2\right)^\frac{1}{2}\left(\sum_{l \in I}|\langle x,f_{i_l}\rangle|^2\right)^\frac{1}{2}
\leq \sqrt{B_f}\sup_{\|x\|=1}\|\bold{c}\| \|x\|< \infty,
\end{align*}
where $B_f$ is an upper frame bound of $f$. It then follows from (\ref{eq4}) that $Ker^*(T_f)\subset Ker(T_y)$. Also, by Theorem \ref{thm1}, (\ref{cond1}) holds. Subsequently, we have $\big\langle f_i,u_k \big\rangle=\big\langle w_k,y_i \big\rangle$, as was shown in the proof of Theorem \ref{thm1}. Then, $\sum\limits_{i \in I}c_if_i=\sum\limits_{i \in I}c_i\left(\sum\limits_{k \in I}\big\langle f_i,u_k \big\rangle u_k\right)=\sum\limits_{i \in I}c_i\left(\sum\limits_{k \in I}\big\langle w_k,y_i \big\rangle u_k\right)=\sum\limits_{k \in I}\left\langle w_k,\sum\limits_{i \in I}\overline{c_i}y_i \right\rangle u_k$ and so, $Ker(T_y)\subset Ker^*(T_f)$. Thus, $Ker(T_y)=Ker^*(T_f)$, thereby proving (i).
\par
In order to prove (ii), we shall show that the map $\theta$ is not surjective. Suppose, on the contrary, it is surjective. Then, for every $\{c_i\}_{i \in I} \in Ker^*(T_f)$, there exists some $x \in (span\{w_j\})^\perp$ such that $\{\langle x,v_i \rangle \}_{i \in I}=\{c_i\}_{i \in I}$, which implies that $\sum\limits_{j \in I}c_jv_j=x$. Therefore, $\left\{\left\langle \sum\limits_{j \in I}c_jv_j,v_i \right\rangle \right\}_{i \in I}\\ =\{c_i\}_{i \in I}$, which in turn gives $\sum\limits_{j \in I}c_j\left(\langle v_j,v_i \rangle-\delta_{ji}\right)=0,\  \forall\ i \in I$. In other words, for $i \in I$, $\left\langle \{c_j\}_{j \in I},\{\langle v_i,v_j \rangle-\delta_{ji}\}_{j \in I} \right\rangle=0,\  \forall\ \{c_j\}_{j \in I} \in Ker^*(T_f)$. If we show that $\{\langle v_i,v_j \rangle-\delta_{ji}\}_{j \in I} \in Ker^*(T_f), \forall\ i \in I$, then we would get that $v$ is orthonormal, which is a contradiction to our hypothesis. Hence, all that remains to prove is $\{\langle v_i,v_j \rangle-\delta_{ji}\}_{j \in I} \in Ker^*(T_f), \forall\ i \in I$. As $w$ is a weak R-dual of $f$ with respect to $u$ and $v$, we have $(G(v,v)^t-\mathcal{I})G(f,u)=\bold{0}$, from which we get $\left\langle \sum\limits_{j \in I}\langle v_j,v_i \rangle f_j-f_i,u_k \right\rangle=0,\ \forall\ k \in I$. So, $\sum\limits_{j \in I}\langle v_j,v_i \rangle f_j-f_i=0$, thereby proving $\{\langle v_i,v_j \rangle-\delta_{ji}\}_{j \in I} \in Ker^*(T_f), \forall\ i \in I$ and hence the theorem.
\qed\end{proof}

From Theorems \ref{thm2} and \ref{thm4}, we understand that $dim((span\{w_j\})^\perp)\leq dim(Ker(T_y))$ is a necessary condition for $w$ to be a weak R-dual of $f$. The following theorem provides a few sufficient conditions, besides the above condition on the dimension, for $w$ to be a weak R-dual of $f$. However, the results vary depending on the relation between the dimensions, which we elaborate by taking into account all possible cases.
\begin{thm}\label{thm7}
Let $w=\{w_j\}_{j \in I}$ be a frame sequence, $f=\{f_i\}_{i \in I}$ be a frame and $u=\{u_i\}_{i \in I}$ be a Parseval frame for $H$.
\begin{enumerate}
\item[(i)] If $y$, given by (\ref{def of ni}), is a Parseval frame for $\overline{span}\{w_j\}$, (\ref{cond1}) holds and $dim((span\{w_j\})^\perp)$ $<dim(Ker(T_y))$ or $dim((span\{w_j\})^\perp)=dim(Ker(T_y))=\infty$, then there exists a Parseval frame $v=\{v_i\}_{i \in I}$ for $H$, which is not an orthonormal basis, such that $w$ is a weak R-dual of $f$ with respect to $u$ and $v$.
\item[(ii)] If $y$, given by (\ref{def of ni}), is a Parseval frame for $\overline{span}\{w_j\}$, (\ref{cond1}) holds and $dim((span\{w_j\})^\perp)$ $=dim(Ker(T_y))<\infty$, then there exists an orthonormal basis $v=\{v_i\}_{i \in I}$ for $H$ such that $w$ is a weak R-dual of $f$ with respect to $u$ and $v$. Moreover, $w$ cannot be a weak R-dual of $f$ with respect to $u$ and any Parseval frame $v^\prime$, which is not an orthonormal basis.
\item[(iii)] If $dim((span\{w_j\})^\perp)>dim(Ker(T_y))$, then $w$ cannot be a weak R-dual of $f$ with respect to $u$ and any Parseval frame $v$.
\end{enumerate}
\end{thm}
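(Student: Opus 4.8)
My plan rests on the reduction, implicit in Theorem~\ref{thm1}, that producing a Parseval frame $v$ for which $w$ is a weak R-dual of $f$ amounts to controlling the component $x=\{x_i\}$ in the decomposition $v_i=y_i+x_i$, where $y$ is fixed by (\ref{def of ni}) and $x_i\in(span\{w_j\})^\perp$. Writing $W=\overline{span}\{w_j\}$ and $W^\perp=(span\{w_j\})^\perp$, I would split $z=z_1+z_2$ with $z_1\in W$ and $z_2\in W^\perp$ and observe that, since $y$ is a Parseval frame for $W$, the requirement that $v$ be a Parseval frame for $H$ is equivalent to $\{x_i\}$ being a Parseval frame for $W^\perp$ together with the vanishing of the cross term $\sum_i\langle z_1,y_i\rangle\overline{\langle z_2,x_i\rangle}=\langle T_y^*z_1,T_x^*z_2\rangle$ for all such $z_1,z_2$. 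This vanishing holds for all $z_1,z_2$ exactly when $Ran(T_x^*)\subseteq Ran(T_y^*)^\perp=Ker(T_y)$, so the whole question reduces to the existence of an isometry from $W^\perp$ into $Ker(T_y)$, namely $T_x^*$. Moreover, since $Ran(T_v^*)$ is then the orthogonal sum $Ran(T_y^*)\oplus Ran(T_x^*)$ and $Ran(T_y^*)^\perp=Ker(T_y)$, such a $v$ is an orthonormal basis precisely when $Ran(T_x^*)=Ker(T_y)$; equivalently, $v$ fails to be an orthonormal basis exactly when $Ran(T_x^*)\subsetneq Ker(T_y)$.

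For part~(i) I would realize this embedding with proper range. Given any isometry $U\colon W^\perp\to Ker(T_y)$, set $x_i:=U^*e_i$ with $\{e_i\}$ the standard orthonormal basis of $\ell^2$; then $x_i\in W^\perp$, $T_x^*=U$, and a direct computation shows that $\{x_i\}$ is a Parseval frame for $W^\perp$ while $\{v_i=y_i+x_i\}$ is a Parseval frame for $H$, the cross terms dropping out because $Ran(U)\subseteq Ker(T_y)\perp Ran(T_y^*)$ and $y$ is Parseval for $W$. Since (\ref{cond1}) holds and $v_i=y_i+x_i$ with $x_i\in W^\perp$, Theorem~\ref{thm1} then gives that $w$ is a weak R-dual of $f$ with respect to $u$ and $v$. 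The main obstacle is producing $U$ with $Ran(U)$ a \emph{proper} closed subspace of $Ker(T_y)$: when $dim(W^\perp)<dim(Ker(T_y))$ I would map $W^\perp$ isometrically onto a closed subspace of $Ker(T_y)$ of dimension $dim(W^\perp)$, which is automatically proper; when $dim(W^\perp)=dim(Ker(T_y))=\infty$ I would instead compose a unitary $W^\perp\to Ker(T_y)$ with a unilateral-shift-type isometry of the separable infinite-dimensional space $Ker(T_y)$ into itself, whose range is proper. In both regimes $Ran(T_x^*)\subsetneq Ker(T_y)$, so $v$ is a Parseval frame that is not an orthonormal basis.

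For part~(ii), the existence of an orthonormal basis $v$ is immediate from Theorem~\ref{thm2}, all of whose hypotheses ($y$ Parseval for $W$, $dim(Ker(T_y))=dim(W^\perp)$, and (\ref{cond1})) are assumed here. For the ``moreover'' assertion I would argue by contradiction: were $w$ a weak R-dual of $f$ with respect to $u$ and a Parseval frame $v'$ that is not an orthonormal basis, then $dim(W^\perp)<\infty$ together with Theorem~\ref{thm4}(ii) would force $dim(W^\perp)<dim(Ker(T_y))$, contradicting the standing equality $dim(W^\perp)=dim(Ker(T_y))$.

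For part~(iii) I would again argue by contradiction. Suppose $w$ is a weak R-dual of $f$ with respect to $u$ and some Parseval frame $v$. If $v$ is an orthonormal basis, Theorem~\ref{thm2} yields $dim(W^\perp)=dim(Ker(T_y))$; if $v$ is not, Theorem~\ref{thm4}(i) yields $dim(W^\perp)\le dim(Ker(T_y))$. In either case $dim(W^\perp)\le dim(Ker(T_y))$, which contradicts the hypothesis $dim(W^\perp)>dim(Ker(T_y))$; hence no such $v$ exists.
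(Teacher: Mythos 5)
Your proposal is correct and takes essentially the same route as the paper: parts (ii) and (iii) are exactly the paper's appeals to Theorems \ref{thm2} and \ref{thm4}, and in part (i) your non-surjective isometry $U\colon (span\{w_j\})^\perp \longrightarrow Ker(T_y)$ with $x_i=U^*e_i$ reproduces the paper's construction, since $Ker(T_y)=(T_y^*(\overline{span}\{w_j\}))^\perp$ and the paper's $v_i=U^*(e_i)=y_i+Q^*(e_i-T_y^*(y_i))$ coincides with your $v_i=y_i+x_i$ (including the same two regimes: a proper embedding when the dimensions are strictly ordered, a shift-type isometry when both are infinite). The only cosmetic difference is that you verify Parsevalness of $v$ directly through the orthogonal decomposition, with the cross terms vanishing because $Ran(T_x^*)\subseteq Ker(T_y)\perp Ran(T_y^*)$, whereas the paper routes the same fact through the global isometry $T_y^*\oplus Q$ on $H$ before invoking Theorem \ref{thm1}.
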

\begin{proof}
Suppose the hypothesis of (i) holds true. Now, $dim(Ker(T_y))=dim((T_y^*(\overline{span}\{w_j\}))^\perp)$. For, $\sum\limits_{i \in I}\overline{c_i}\langle g,y_i\rangle=\left\langle g,\sum\limits_{i \in I}c_iy_i\right\rangle,\ \forall\ g \in \overline{span}\{w_j\}_{j \in I}$ and $\{c_i\}_{i \in I} \in \ell^2(I)$, which implies that $\{c_i\}_{i \in I} \in (T_y^*(\overline{span}\{w_j\}))^\perp$ if and only if $\{c_i\}_{i \in I} \in Ker(T_y)$. Thus, either $dim((span\{w_j\})^\perp)<dim((T_y^*(\overline{span}\{w_j\}))^\perp)$ or $dim((span\{w_j\})^\perp)=dim((T_y^*(\overline{span}\{w_j\}))^\perp)=\infty$.
\par
Let us first consider the case when $m=dim((span\{w_j\})^\perp)<dim((T_y^*(\overline{span}\{w_j\}))^\perp)=d$ (finite or infinite). If $m=0$, then $y$ is a Parseval frame for $H$ and by Theorem \ref{thm1}, $w$ is a weak R-dual of $f$ with respect to $u$ and $y$. However, $y$ is not orthonormal as $dim(Ker(T_y))>0$. Suppose, $m\neq 0$. In order to construct $v$, we define an operator $Q:(span\{w_j\})^\perp\longrightarrow (T_y^*(\overline{span}\{w_j\}))^\perp$ by $Q(\phi_j)=\psi_j,\ 1\leq j\leq m$, where $\{\phi_j\}_{j=1}^m$ and $\{\psi_j\}_{j=1}^d$ are orthonormal bases of $(span\{w_j\})^\perp$ and $(T_y^*(\overline{span}\{w_j\}))^\perp$ respectively. The operator $Q$, extended linearly to $(span\{w_j\})^\perp$, is a non-surjective linear isometry. Using $Q$, we define an operator $U:H\longrightarrow \ell^2(I)$ by $U(z_1+z_2)=T_y^*(z_1)+Q(z_2)$, where $z_1 \in \overline{span}\{w_j\}$ and $z_2 \in (span\{w_j\})^\perp$. One can easily verify that $U$ is also a non-surjective linear isometry. Let $v_i:=U^*(e_i),\ i \in I$, where $\{e_i\}_{i \in I}$ is the standard orthonormal basis of $\ell^2(I)$. It can be easily seen that for $\{c_i\}_{i \in I} \in T_y^*(\overline{span}\{w_j\})$ and $\{d_i\}_{i \in I} \in (T_y^*(\overline{span}\{w_j\}))^\perp, U^*(\{c_i\}+\{d_i\})=T_y(\{c_i\})+Q^*(\{d_i\})$. We may write $v_i=U^*(T_y^*(y_i)+e_i-T_y^*(y_i))$. Clearly, $T_y^*(y_i) \in T_y^*(\overline{span}\{w_j\})$. For $g \in \overline{span}\{w_j\}$,
\begin{equation*}
\langle e_i-T_y^*(y_i),T_y^*(g)\rangle=\langle y_i,g\rangle-\sum_{j \in I}\langle y_i,y_j\rangle \langle y_j,g\rangle=\langle y_i,g\rangle-\left\langle \sum_{j \in I}\langle y_i,y_j\rangle y_j,g\right\rangle=0,
\end{equation*}
as $y$ is a Parseval frame for $\overline{span}\{w_j\}$. So, $e_i-T_y^*(y_i) \in (T_y^*(\overline{span}\{w_j\}))^\perp$. Therefore, $v_i=T_y(T_y^*(y_i))+Q^*(e_i-T_y^*(y_i))=y_i+Q^*(e_i-T_y^*(y_i))$.
Also, $v=\{v_i\}_{i \in I}$ is a Parseval frame for $H$, as $\sum\limits_{i \in I}\left|\langle z,U^*(e_i)\rangle\right|^2=\sum\limits_{i \in I}\left|\langle U(z),e_i\rangle\right|^2=\|U(z)\|^2=\|z\|^2,\ \forall\ z \in H$. However, $v$ is not an orthonormal basis, for otherwise, $U^*$ and hence $U$ will be unitary, which is not true. Applying Theorem \ref{thm1}, we infer that $w$ is a weak R-dual of $f$ with respect to the Parseval frames $u$ and $v$.
\par
In the other case when $dim((span\{w_j\})^\perp)=dim((T_y^*(\overline{span}\{w_j\}))^\perp)=\infty$, we shall define the operator $Q:(span\{w_j\})^\perp\longrightarrow (T_y^*(\overline{span}\{w_j\}))^\perp$ by $Q(\phi_j)=\psi_{j+1}$, where $\{\phi_j\}_{j \in I}$ and $\{\psi_j\}_{j \in I}$ are orthonormal bases of $(span\{w_j\})^\perp$ and $(T_y^*(\overline{span}\{w_j\}))^\perp$ respectively. The operator $Q$, extended linearly to $(span\{w_j\})^\perp$, is again a non-surjective linear isometry. Now, following the lines of proof of the previous case, we can say that there exists a Parseval frame, not an orthonormal basis, $v$ for $H$ such that $w$ is a weak R-dual of $f$ with respect to $u$ and $v$, thereby proving (i).
\par
If the sequence $\{y_i\}_{i \in I}$, given by (\ref{def of ni}), is a Parseval frame for $\overline{span}\{w_j\}_{j \in I}$, (\ref{cond1}) holds and $dim((span\{w_j\})^\perp)=dim(Ker(T_y))<\infty$, then by Theorem \ref{thm2}, there exists an orthonormal basis $v=\{v_i\}_{i \in I}$ for $H$ such that $w$ is a weak R-dual of $f$ with respect to $u$ and $v$. Further, suppose that there exists a Parseval frame $v^\prime$ for $H$ (not an orthonormal basis) such that $w$ is a weak R-dual of $f$ with respect to $u$ and $v^\prime$. Then, by Theorem \ref{thm4}, we get $dim((span\{w_j\})^\perp)<dim(Ker(T_y))$, which contradicts our assumption and hence, such a weak R-dual does not exist.
\par
When $dim((span\{w_j\})^\perp)>dim(Ker(T_y))$, $w$ cannot be a weak R-dual of $f$ with respect to $u$ and any Parseval frame $v$, be it an orthonormal basis or not, by Theorems \ref{thm2} and \ref{thm4}.
\qed\end{proof}

We shall now illustrate the results in the above theorem using the examples below.
First, we shall see an example of a weak R-dual $\{w_j\}_{j \in I}$ of $\{f_i\}_{i \in I}$ when $dim((span\{w_j\})^\perp)<dim(Ker(T_y))$.
\begin{ex}
Let $\{z_i\}_{i \in \mathbb{N}}$ be an orthonormal basis for $H$,
\begin{align*}
\{u_i\}_{i \in \mathbb{N}}=\{f_i\}_{i \in \mathbb{N}}=\left\{\frac{z_1}{\sqrt{2}},\frac{z_1}{\sqrt{2}},\frac{z_2}{\sqrt{2}},\frac{z_2}{\sqrt{2}},...\right\} \hspace{0.7em} and \hspace{0.7em}
\{w_i\}_{i \in \mathbb{N}}=\left\{\frac{z_2}{\sqrt{2}},\frac{z_2}{\sqrt{2}},\frac{z_3}{\sqrt{2}},\frac{z_3}{\sqrt{2}},...\right\}.
\end{align*}
Clearly, $\{u_i\}_{i \in \mathbb{N}}$ and $\{f_i\}_{i \in \mathbb{N}}$ are Parseval frames for $H$, and $\{w_i\}_{i \in \mathbb{N}}$ is a frame sequence in $H$ with its canonical dual being itself. We shall compute $y_i$'s as follows. For $i \in \mathbb{N}$,
\begin{align*}
y_{2i}&=\sum_{k \in \mathbb{N}}\langle u_{2k-1},f_{2i} \rangle w_{2k-1}+\sum_{k \in \mathbb{N}}\big\langle u_{2k},f_{2i} \big\rangle w_{2k}\\
&=\sum_{k \in \mathbb{N}}\left\langle \frac{z_k}{\sqrt{2}},\frac{z_i}{\sqrt{2}} \right\rangle \frac{z_{k+1}}{\sqrt{2}}+\sum_{k \in \mathbb{N}}\left\langle \frac{z_k}{\sqrt{2}},\frac{z_i}{\sqrt{2}} \right\rangle \frac{z_{k+1}}{\sqrt{2}}\\
&=\frac{z_{i+1}}{\sqrt{2}}.
\end{align*}
Similarly, $y_{2i-1}=\frac{z_{i+1}}{\sqrt{2}},\ i \in \mathbb{N}$. So, $\{y_i\}_{i \in \mathbb{N}}=\left\{\frac{z_2}{\sqrt{2}},\frac{z_2}{\sqrt{2}},\frac{z_3}{\sqrt{2}},\frac{z_3}{\sqrt{2}},...\right\}$, which is a Parseval frame for $\overline{span}\{w_j\}_{j \in \mathbb{N}}$. In fact, in this case $dim((span\{w_j\})^\perp)<dim(Ker(T_y))=\infty$. Further, for all odd $i,j$ we have,
\begin{align*}
\sum\limits_{k \in \mathbb{N}}\big\langle w_k,w_i \big\rangle \big\langle u_k,f_j \big\rangle&=\sum\limits_{k \in \mathbb{N}}\big\langle w_{2k-1},w_i \big\rangle \big\langle u_{2k-1},f_j \big\rangle+\sum\limits_{k \in \mathbb{N}}\big\langle w_{2k},w_i \big\rangle \big\langle u_{2k},f_j \big\rangle\\
&=\sum\limits_{k \in \mathbb{N}}\left\langle \frac{z_{k+1}}{\sqrt{2}},\frac{z_{(i+3)/2}}{\sqrt{2}} \right\rangle \left\langle \frac{z_k}{\sqrt{2}},\frac{z_{(j+1)/2}}{\sqrt{2}} \right\rangle\\ &\qquad \quad+\sum\limits_{k \in \mathbb{N}}\left\langle \frac{z_{k+1}}{\sqrt{2}},\frac{z_{(i+3)/2}}{\sqrt{2}} \right\rangle \left\langle \frac{z_k}{\sqrt{2}},\frac{z_{(j+1)/2}}{\sqrt{2}} \right\rangle\\
&=\left\langle \frac{z_{(i+1)/2}}{\sqrt{2}},\frac{z_{(j+1)/2}}{\sqrt{2}} \right\rangle\\
&=\big\langle u_i,f_j \big\rangle.
\end{align*}
Similarly, we can prove that $\sum\limits_{k \in \mathbb{N}}\big\langle w_k,w_i \big\rangle \big\langle u_k,f_j \big\rangle=\big\langle u_i,f_j \big\rangle$ for all the other choices of $i,j$, which gives $(G(\widetilde{w},w)^t-\mathcal{I})G(u,f)=\bold{0}$. Now, let $\{x_i\}_{i \in \mathbb{N}}=\left\{\frac{z_1}{\sqrt{2}},\frac{-z_1}{\sqrt{2}},0,0,...\right\}$, which is a sequence in $(span\{w_j\})^\perp$. Then, taking $\{v_i\}_{i \in \mathbb{N}}:=\{y_i+x_i\}_{i \in \mathbb{N}}=\left\{\frac{z_2+z_1}{\sqrt{2}},\frac{z_2-z_1}{\sqrt{2}},\frac{z_3}{\sqrt{2}},\frac{z_3}{\sqrt{2}},...\right\}$, we have $\{v_i\}_{i \in \mathbb{N}}$ as a Parseval frame for $H$ but not an orthonormal basis. We shall show that $\{w_i\}_{i \in \mathbb{N}}$ is a weak R-dual of $\{f_i\}_{i \in \mathbb{N}}$ with respect to the Parseval frames $\{u_i\}_{i \in \mathbb{N}}$ and $\{v_i\}_{i \in \mathbb{N}}$. Consider,
\begin{align*}
\sum_{i \in \mathbb{N}}\langle f_i,u_j\rangle v_i&=\sum_{i=1}^2\langle f_i,u_j\rangle v_i+\sum_{i=2}^\infty \langle f_{2i-1},u_j\rangle v_{2i-1}+\sum_{i=2}^\infty \langle f_{2i},u_j\rangle v_{2i}\\
&=\begin{cases}
\left\langle \frac{z_1}{\sqrt{2}},\frac{z_1}{\sqrt{2}}\right\rangle \left(\frac{z_2+z_1}{\sqrt{2}}\right)+\left\langle \frac{z_1}{\sqrt{2}},\frac{z_1}{\sqrt{2}}\right\rangle \left(\frac{z_2-z_1}{\sqrt{2}}\right), & j=1,2\\
2\sum\limits_{i=2}^\infty\left\langle \frac{z_i}{\sqrt{2}},\frac{z_{(j+1)/2}}{\sqrt{2}}\right\rangle \frac{z_{i+1}}{\sqrt{2}}, & j\ is\ odd,\ j\geq 3\\
2\sum\limits_{i=2}^\infty\left\langle \frac{z_i}{\sqrt{2}},\frac{z_{j/2}}{\sqrt{2}}\right\rangle \frac{z_{i+1}}{\sqrt{2}}, & j\ is\ even,\ j\geq 3
\end{cases}\\
&=\begin{cases}
\frac{z_2}{\sqrt{2}}, & j=1,2\\
\frac{z_{(j+3)/2}}{\sqrt{2}}, & j\ is\ odd,\ j\geq 3\\
\frac{z_{(j+2)/2}}{\sqrt{2}}, & j\ is\ even,\ j\geq 3\\
\end{cases}\\
&=w_j
\end{align*} and
\begin{align*}
\sum_{k \in \mathbb{N}}\langle v_k,v_i\rangle \langle f_k,u_j\rangle&=\sum_{k=1}^2 \langle v_k,v_i\rangle \langle f_k,u_j\rangle+\sum_{k=2}^\infty \langle v_{2k-1},v_i\rangle \langle f_{2k-1},u_j\rangle+\sum_{k=2}^\infty\langle v_{2k},v_i\rangle \langle f_{2k},u_j\rangle\\
&=\begin{cases}
\left\langle \frac{z_2+z_1}{\sqrt{2}},v_i\right\rangle \langle \frac{z_1}{\sqrt{2}},u_j\rangle+\left\langle \frac{z_2-z_1}{\sqrt{2}},v_i\right\rangle \langle \frac{z_1}{\sqrt{2}},u_j\rangle, & i=1,2\\
2\sum\limits_{k=2}^\infty\left\langle \frac{z_{k+1}}{\sqrt{2}},\frac{z_{(i+3)/2}}{\sqrt{2}}\right\rangle \langle \frac{z_k}{\sqrt{2}},u_j\rangle, & i\ is\ odd,\ i\geq 3\\
2\sum\limits_{k=2}^\infty\left\langle \frac{z_{k+1}}{\sqrt{2}},\frac{z_{(i+2)/2}}{\sqrt{2}}\right\rangle \langle \frac{z_k}{\sqrt{2}},u_j\rangle, & i\ is\ even,\ i\geq 3
\end{cases}\\
&=\begin{cases}
\left\langle \frac{z_1}{\sqrt{2}},u_j\right\rangle, & i=1,2\\
\left\langle \frac{z_{(i+1)/2}}{\sqrt{2}},u_j\right\rangle, & i\ is\ odd,\ i\geq 3\\
\left\langle \frac{z_{i/2}}{\sqrt{2}},u_j\right\rangle, & i\ is\ even,\ i\geq 3
\end{cases}\\
&=\langle f_i,u_j\rangle.
\end{align*}
\end{ex}

\vskip 1em
An example of a weak R-dual with respect to two Parseval frames, which are not orthonormal bases, when $dim((span\{w_j\})^\perp)=dim(Ker(T_y))=\infty$ is as follows.
\begin{ex}
Consider the sequences
\begin{align*}
\{f_i\}_{i \in \mathbb{N}}=\{u_i\}_{i \in \mathbb{N}}&=\left\{\frac{z_1}{2},\frac{z_1}{2},\frac{z_1}{2},\frac{z_1}{2},\frac{z_2}{2},\frac{z_2}{2},\frac{z_2}{2},\frac{z_2}{2},...\right\} \\
and \hspace{1em} \{w_i\}_{i \in \mathbb{N}}&=\left\{\frac{z_1}{2},\frac{z_1}{2},\frac{z_1}{2},\frac{z_1}{2},\frac{z_3}{2},\frac{z_3}{2},\frac{z_3}{2},\frac{z_3}{2},...\right\},
\end{align*} where $\{z_i\}_{i \in \mathbb{N}}$ is an orthonormal basis for $H$. It can be easily verified that $\{u_i\}_{i \in \mathbb{N}}$ is a Parseval frame for $H, \{f_i\}_{i \in \mathbb{N}}$ is a frame for $H, \{w_i\}_{i \in \mathbb{N}}$ is a frame sequence in $H$ and the canonical dual $\{\widetilde{w}_i\}_{i \in \mathbb{N}}$ of $\{w_i\}_{i \in \mathbb{N}}$ is itself. The terms of the sequence $\{y_i\}_{i \in \mathbb{N}}$ can be explicitly found as follows. For $i \in \mathbb{N}$,
\begin{align*}
y_{4i-3}&=\sum_{k \in \mathbb{N}}\big\langle u_{4k-3},f_{4i-3} \big\rangle w_{4k-3}+\sum_{k \in \mathbb{N}}\big\langle u_{4k-2},f_{4i-3} \big\rangle w_{4k-2}+\sum_{k \in \mathbb{N}}\big\langle u_{4k-1},f_{4i-3} \big\rangle w_{4k-1}\\ & \qquad+\sum_{k \in \mathbb{N}}\big\langle u_{4k},f_{4i-3} \big\rangle w_{4k}\\
&=\sum_{k \in \mathbb{N}}\left\langle \frac{z_k}{2},\frac{z_i}{2} \right\rangle \frac{z_{2k-1}}{2}+\sum_{k \in \mathbb{N}}\left\langle \frac{z_k}{2},\frac{z_i}{2} \right\rangle \frac{z_{2k-1}}{2}+\sum_{k \in \mathbb{N}}\left\langle \frac{z_k}{2},\frac{z_i}{2} \right\rangle \frac{z_{2k-1}}{2}+\sum_{k \in \mathbb{N}}\left\langle \frac{z_k}{2},\frac{z_i}{2} \right\rangle \frac{z_{2k-1}}{2}\\
&=4 \sum_{k \in \mathbb{N}}\left\langle \frac{z_k}{2},\frac{z_i}{2} \right\rangle \frac{z_{2k-1}}{2}\\
&=\frac{z_{2i-1}}{2}.
\end{align*}
Likewise, $y_{4i-2}=y_{4i-1}=y_{4i}=\frac{z_{2i-1}}{2},\ i \in \mathbb{N}$. Therefore, $\{y_i\}_{i \in \mathbb{N}}=\left\{\frac{z_1}{2},\frac{z_1}{2},\frac{z_1}{2},\frac{z_1}{2},\frac{z_3}{2},\frac{z_3}{2},\frac{z_3}{2},\frac{z_3}{2},...\right\}$, which is a Parseval frame for $\overline{span}\{w_j\}_{j \in \mathbb{N}}$. Furthermore, for all $j \in \mathbb{N}$ and $i=4m-3,\ m \in \mathbb{N}$,
\begin{align*}
\sum\limits_{k \in \mathbb{N}}\big\langle w_k,w_i \big\rangle \big\langle u_k,f_j \big\rangle&=\sum\limits_{k \in \mathbb{N}}\big\langle w_{4k-3},w_i \big\rangle \big\langle u_{4k-3},f_j \big\rangle+\sum\limits_{k \in \mathbb{N}}\big\langle w_{4k-2},w_i \big\rangle \big\langle u_{4k-2},f_j \big\rangle\\ & \qquad+\sum\limits_{k \in \mathbb{N}}\big\langle w_{4k-1},w_i \big\rangle \big\langle u_{4k-1},f_j \big\rangle+\sum\limits_{k \in \mathbb{N}}\big\langle w_{4k},w_i \big\rangle \big\langle u_{4k},f_j \big\rangle\\
&=\sum\limits_{k \in \mathbb{N}}\left\langle \frac{z_{2k-1}}{2},\frac{z_{(i+1)/2}}{2} \right\rangle \left\langle \frac{z_k}{2},f_j \right\rangle+\sum\limits_{k \in \mathbb{N}}\left\langle \frac{z_{2k-1}}{2},\frac{z_{(i+1)/2}}{2} \right\rangle \left\langle \frac{z_k}{2},f_j \right\rangle\\ &\qquad+\sum\limits_{k \in \mathbb{N}}\left\langle \frac{z_{2k-1}}{2},\frac{z_{(i+1)/2}}{2} \right\rangle \left\langle \frac{z_k}{2},f_j \right\rangle+\sum\limits_{k \in \mathbb{N}}\left\langle \frac{z_{2k-1}}{2},\frac{z_{(i+1)/2}}{2} \right\rangle \left\langle \frac{z_k}{2},f_j \right\rangle\\
&=4\sum\limits_{k \in \mathbb{N}}\left\langle \frac{z_{2k-1}}{2},\frac{z_{(i+1)/2}}{2} \right\rangle \left\langle \frac{z_k}{2},f_j \right\rangle\\
&=\left\langle \frac{z_{(i+3)/4}}{2},f_j \right\rangle\\
&=\big\langle u_i,f_j \big\rangle.
\end{align*}
The above relation holds for the other choices of $i,j$ as well, which implies that $(G(\widetilde{w},w)^t-\mathcal{I})G(u,f)=\bold{0}$. We note that in this example, $dim((span\{w_j\})^\perp)=dim(Ker(T_y))=\infty$. Now, taking $\{x_i\}_{i \in \mathbb{N}}=\left\{\frac{z_2}{2},\frac{z_2}{2},\frac{-z_2}{2},\frac{-z_2}{2},\frac{z_4}{2},\frac{z_4}{2},\frac{-z_4}{2},\frac{-z_4}{2},...\right\}$ in $(span\{w_j\})^\perp$, we get $\{v_i\}_{i \in \mathbb{N}}=\{y_i+x_i\}_{i \in \mathbb{N}}=\left\{\frac{z_1+z_2}{2},\frac{z_1+z_2}{2},\frac{z_1-z_2}{2},\frac{z_1-z_2}{2},\frac{z_3+z_4}{2},\frac{z_3+z_4}{2},\frac{z_3-z_4}{2},\frac{z_3-z_4}{2},...\right\}$, which is a Parseval frame for $H$ but not an orthonormal basis. Consider,
\begin{align*}
\sum_{i \in \mathbb{N}}\langle f_i,u_j\rangle v_i&=\sum_{i \in \mathbb{N}}\big\langle f_{4i-3},u_j \big\rangle v_{4i-3}+\sum_{i \in \mathbb{N}}\big\langle f_{4i-2},u_j \big\rangle v_{4i-2}+\sum_{i \in \mathbb{N}}\big\langle f_{4i-1},u_j \big\rangle v_{4i-1}+\sum_{i \in \mathbb{N}}\big\langle f_{4i},u_j \big\rangle v_{4i}\\
&=2\left(\sum_{i \in \mathbb{N}}\left\langle \frac{z_i}{2},u_j\right\rangle \left(\frac{z_{2i-1}+z_{2i}}{2}\right)+\sum_{i \in \mathbb{N}}\left\langle \frac{z_i}{2},u_j\right\rangle \left(\frac{z_{2i-1}-z_{2i}}{2}\right)\right)\\
&=2\sum_{i \in \mathbb{N}}\left\langle \frac{z_i}{2},u_j\right\rangle z_{2i-1}\\
&=\begin{cases}
2\sum\limits_{i \in \mathbb{N}}\left\langle \frac{z_i}{2},\frac{z_{(j+3)/4}}{2}\right\rangle z_{2i-1}, & j=4k-3,\ k \in \mathbb{N}\\
2\sum\limits_{i \in \mathbb{N}}\left\langle \frac{z_i}{2},\frac{z_{(j+2)/4}}{2}\right\rangle z_{2i-1}, & j=4k-2,\ k \in \mathbb{N}\\
2\sum\limits_{i \in \mathbb{N}}\left\langle \frac{z_i}{2},\frac{z_{(j+1)/4}}{2}\right\rangle z_{2i-1}, & j=4k-1,\ k \in \mathbb{N}\\
2\sum\limits_{i \in \mathbb{N}}\left\langle \frac{z_i}{2},\frac{z_{j/4}}{2}\right\rangle z_{2i-1}, & j=4k,\ k \in \mathbb{N}
\end{cases}\\
&=\begin{cases}
\frac{z_{(j+1)/2}}{2}, & j=4k-3,\ k \in \mathbb{N}\\
\frac{z_{j/2}}{2}, & j=4k-2,\ k \in \mathbb{N}\\
\frac{z_{(j-1)/2}}{2}, & j=4k-1,\ k \in \mathbb{N}\\
\frac{z_{(j-2)/2}}{2}, & j=4k,\ k \in \mathbb{N}
\end{cases}\\
&=w_j
\end{align*} and
\begin{align*}
\sum_{k \in \mathbb{N}}\langle v_k,v_i\rangle \langle f_k,u_j\rangle&=\sum_{k \in \mathbb{N}}\langle v_{4k-3},v_i\rangle \langle f_{4k-3},u_j\rangle+\sum_{k \in \mathbb{N}}\langle v_{4k-2},v_i\rangle \langle f_{4k-2},u_j\rangle\\ & \qquad+\sum_{k \in \mathbb{N}}\langle v_{4k-1},v_i\rangle \langle f_{4k-1},u_j\rangle+\sum_{k \in \mathbb{N}}\langle v_{4k},v_i\rangle \langle f_{4k},u_j\rangle\\
&=2\left(\sum_{k \in \mathbb{N}}\left\langle \frac{z_{2k-1}+z_{2k}}{2},v_i\right\rangle \left\langle \frac{z_k}{2},u_j\right\rangle+\sum_{k \in \mathbb{N}}\left\langle \frac{z_{2k-1}-z_{2k}}{2},v_i\right\rangle \left\langle \frac{z_k}{2},u_j\right\rangle\right)\\
&=2\sum_{k \in \mathbb{N}}\left\langle z_{2k-1},v_i\right\rangle \left\langle \frac{z_k}{2},u_j\right\rangle\\
&=\begin{cases}
2\sum\limits_{k \in \mathbb{N}}\left\langle z_{2k-1},\frac{z_{2m-1}+z_{2m}}{2}\right\rangle \left\langle \frac{z_k}{2},u_j\right\rangle, & i=4m-3,\ m \in \mathbb{N}\\
2\sum\limits_{k \in \mathbb{N}}\left\langle z_{2k-1},\frac{z_{2m-1}+z_{2m}}{2}\right\rangle \left\langle \frac{z_k}{2},u_j\right\rangle, & i=4m-2,\ m \in \mathbb{N}\\
2\sum\limits_{k \in \mathbb{N}}\left\langle z_{2k-1},\frac{z_{2m-1}-z_{2m}}{2}\right\rangle \left\langle \frac{z_k}{2},u_j\right\rangle, & i=4m-1,\ m \in \mathbb{N}\\
2\sum\limits_{k \in \mathbb{N}}\left\langle z_{2k-1},\frac{z_{2m-1}-z_{2m}}{2}\right\rangle \left\langle \frac{z_k}{2},u_j\right\rangle, & i=4m,\ m \in \mathbb{N}
\end{cases}\\
&=\begin{cases}
\left\langle \frac{z_{(i+3)/4}}{2},u_j\right\rangle, & i=4m-3,\ m \in \mathbb{N}\\
\left\langle \frac{z_{(i+2)/4}}{2},u_j\right\rangle, & i=4m-2,\ m \in \mathbb{N}\\
\left\langle \frac{z_{(i+1)/4}}{2},u_j\right\rangle, & i=4m-1,\ m \in \mathbb{N}\\
\left\langle \frac{z_{i/4}}{2},u_j\right\rangle, & i=4m,\ m \in \mathbb{N}
\end{cases}\\
&=\langle f_i,u_j\rangle.
\end{align*}
Hence, $\{w_i\}_{i \in \mathbb{N}}$ is a weak R-dual of $\{f_i\}_{i \in \mathbb{N}}$ with respect to the Parseval frames $\{u_i\}_{i \in \mathbb{N}}$ and $\{v_i\}_{i \in \mathbb{N}}$.
\end{ex}

The following is an example of a weak R-dual with respect to a Parseval frame and an orthonormal basis, when $dim((span\{w_j\})^\perp)=dim(Ker(T_y))<\infty$.
\begin{ex}
Consider the sequences
\begin{align*}
\{f_i\}_{i \in \mathbb{N}}=\{u_i\}_{i \in \mathbb{N}}&=\left\{\frac{z_1}{\sqrt{2}},\frac{z_1}{\sqrt{2}},\frac{z_2}{\sqrt{2}},\frac{z_2}{\sqrt{2}},z_3,z_4,z_5,...\right\} \\
and \hspace{1em} \{w_i\}_{i \in \mathbb{N}}&=\left\{\frac{z_3}{\sqrt2},\frac{z_3}{\sqrt2},\frac{z_4}{\sqrt{2}},\frac{z_4}{\sqrt{2}},z_5,z_6,z_7,...\right\},
\end{align*}
where $\{z_i\}_{i \in \mathbb{N}}$ is an orthonormal basis for $H$. It is clear that $\{u_i\}_{i \in \mathbb{N}}$ and $\{f_i\}_{i \in \mathbb{N}}$ are Parseval frames for $H$. Also, $\{w_i\}_{i \in \mathbb{N}}$ is a frame sequence in $H$ and the canonical dual $\{\widetilde{w}_i\}_{i \in \mathbb{N}}$ of $\{w_i\}_{i \in \mathbb{N}}$ is itself. For $i \in \mathbb{N}$,
\begin{align*}
y_{2i}&=\sum_{k=1}^2\big\langle u_{k},f_{2i} \big\rangle w_{k}+\sum_{k=3}^4\big\langle u_{k},f_{2i} \big\rangle w_{k}+\sum_{k=3}^\infty\big\langle u_{2k-1},f_{2i} \big\rangle w_{2k-1}+\sum_{k=3}^\infty\big\langle u_{2k},f_{2i} \big\rangle w_{2k}\\
&=\begin{cases}
2\left\langle \frac{z_1}{\sqrt{2}},\frac{z_1}{\sqrt{2}}\right\rangle \frac{z_3}{\sqrt{2}}, & i=1\\
2\left\langle \frac{z_2}{\sqrt{2}},\frac{z_2}{\sqrt{2}}\right\rangle \frac{z_4}{\sqrt{2}}, & i=2\\
\sum\limits_{k=3}^\infty\big\langle z_{2k-3},z_{2i-2} \big\rangle z_{2k-1}+\sum\limits_{k=3}^\infty\big\langle z_{2k-2},z_{2i-2} \big\rangle z_{2k}, & i\geq 3
\end{cases}\\
&=\begin{cases}
\frac{z_3}{\sqrt{2}}, & i=1\\
\frac{z_4}{\sqrt{2}}, & i=2\\
z_{2i}, & i\geq 3
\end{cases}.
\end{align*}
By similar computations, $y_{2i-1}=\begin{cases}
\frac{z_3}{\sqrt{2}}, & i=1\\
\frac{z_4}{\sqrt{2}}, & i=2\\
z_{2i-1}, & i\geq 3
\end{cases}$. Thus, $\{y_i\}=\left\{\frac{z_3}{\sqrt2},\frac{z_3}{\sqrt2},\frac{z_4}{\sqrt{2}},\frac{z_4}{\sqrt{2}},z_5,z_6,z_7,...\right\},$ which is a Parseval frame for $\overline{span}\{w_j\}_{j \in \mathbb{N}}$. Moreover, for $j \in \mathbb{N}$ and $i=2m,\ m \in \mathbb{N}$,
\begin{align*}
\sum\limits_{k \in \mathbb{N}}\big\langle w_k,w_i \big\rangle \big\langle u_k,f_j \big\rangle&=\sum\limits_{k=1}^2\big\langle w_{k},w_{2m} \big\rangle \big\langle u_{k},f_j \big\rangle+\sum\limits_{k=3}^4\big\langle w_{k},w_{2m} \big\rangle \big\langle u_{k},f_j \big\rangle\\ &\qquad+\sum\limits_{k=3}^\infty\big\langle w_{2k-1},w_{2m} \big\rangle \big\langle u_{2k-1},f_j \big\rangle+\sum\limits_{k=3}^\infty\big\langle w_{2k},w_{2m} \big\rangle \big\langle u_{2k},f_j \big\rangle\\
&=\begin{cases}
2\left\langle \frac{z_3}{\sqrt{2}},\frac{z_3}{\sqrt{2}}\right\rangle \left\langle \frac{z_1}{\sqrt{2}},f_j\right\rangle, & m=1\\
2\left\langle \frac{z_4}{\sqrt{2}},\frac{z_4}{\sqrt{2}}\right\rangle \left\langle\frac{z_2}{\sqrt{2}},f_j\right\rangle, & m=2\\
\sum\limits_{k=3}^\infty\big\langle z_{2k-1},z_{2m} \big\rangle \langle z_{2k-3},f_j\rangle+\sum\limits_{k=3}^\infty\big\langle z_{2k},z_{2m} \big\rangle \langle z_{2k-2},f_j\rangle, & m\geq 3
\end{cases}\\
&=\begin{cases}
\left\langle \frac{z_1}{\sqrt{2}},f_j\right\rangle, & m=1\\
\left\langle\frac{z_2}{\sqrt{2}},f_j\right\rangle, & m=2\\
\langle z_{2m-2},f_j\rangle, & m\geq 3
\end{cases}\\
&=\langle u_{2m},f_j \rangle\\
&=\big\langle u_i,f_j \big\rangle.
\end{align*}
Similarly, the above relation holds for all $j \in \mathbb{N}$ and $i=2m-1,\ m \in \mathbb{N}$ as well, which implies that (\ref{cond1}) is satisfied. Also, $dim((span\{w_j\})^\perp)=dim(Ker(T_y))=2$. Thus, the hypotheses in (ii) of Theorem \ref{thm7} are satisfied. Now, we shall take $\{x_i\}_{i \in \mathbb{N}}=\left\{\frac{z_1}{\sqrt{2}},\frac{-z_1}{\sqrt{2}},\frac{z_2}{\sqrt{2}},\frac{-z_2}{\sqrt{2}},0,0,0,...\right\}$ in $(span\{w_j\})^\perp$. Then, $\{v_i\}_{i \in \mathbb{N}}:=\{y_i+x_i\}_{i \in \mathbb{N}}=\left\{\frac{z_3+z_1}{\sqrt{2}},\frac{z_3-z_1}{\sqrt{2}},\frac{z_4+z_2}{\sqrt{2}},\frac{z_4-z_2}{\sqrt{2}},z_5,z_6,z_7,...\right\}$ is an orthonormal basis of $H$. Consider,
\begin{align*}
\sum_{i \in \mathbb{N}}\langle f_i,u_j\rangle v_i&=\sum_{i=1}^2\big\langle f_{i},u_j \big\rangle v_{i}+\sum_{i=3}^4\big\langle f_{i},u_j \big\rangle v_{i}+\sum_{i=3}^\infty\big\langle f_{2i-1},u_j \big\rangle v_{2i-1}+\sum_{i=3}^\infty\big\langle f_{2i},u_j \big\rangle v_{2i}\\
&=\begin{cases}
\left\langle \frac{z_1}{\sqrt{2}},\frac{z_1}{\sqrt{2}}\right\rangle \left(\frac{z_3+z_1}{\sqrt{2}}\right)+\left\langle \frac{z_1}{\sqrt{2}},\frac{z_1}{\sqrt{2}}\right\rangle \left(\frac{z_3-z_1}{\sqrt{2}}\right), & j=1,2\\
\left\langle \frac{z_2}{\sqrt{2}},\frac{z_2}{\sqrt{2}}\right\rangle \left(\frac{z_4+z_2}{\sqrt{2}}\right)+\left\langle \frac{z_2}{\sqrt{2}},\frac{z_2}{\sqrt{2}}\right\rangle \left(\frac{z_4-z_2}{\sqrt{2}}\right), & j=3,4\\
\sum\limits_{i=3}^\infty\big\langle z_{2i-3},z_{j-2} \big\rangle z_{2i-1}+\sum\limits_{i=3}^\infty\big\langle z_{2i-2},z_{j-2} \big\rangle z_{2i}, & j\geq 5
\end{cases}\\
&=\begin{cases}
\frac{z_3}{\sqrt{2}}, & j=1,2\\
\frac{z_4}{\sqrt{2}}, & j=3,4\\
z_j, & j\geq 5
\end{cases}\\
&=w_j
\end{align*} and
\begin{align*}
\sum_{k \in \mathbb{N}}\langle v_k,v_i\rangle \langle f_k,u_j\rangle&=\sum_{k=1}^2\langle v_k,v_i\rangle \langle f_k,u_j\rangle+\sum_{k=3}^4\langle v_k,v_i\rangle \langle f_k,u_j\rangle+\sum_{k=3}^\infty\langle v_{2k-1},v_i\rangle \langle f_{2k-1},u_j\rangle\\
&\quad\quad +\sum_{k=3}^\infty\langle v_{2k},v_i\rangle \langle f_{2k},u_j\rangle\\
&=\begin{cases}
\left\langle \frac{z_3+z_1}{\sqrt{2}},v_i\right\rangle \left\langle \frac{z_1}{\sqrt{2}},u_j\right\rangle+\left\langle \frac{z_3-z_1}{\sqrt{2}},v_i\right\rangle \left\langle \frac{z_1}{\sqrt{2}},u_j\right\rangle, & i=1,2\\
\left\langle \frac{z_4+z_2}{\sqrt{2}},v_i\right\rangle \left\langle \frac{z_2}{\sqrt{2}},u_j\right\rangle+\left\langle \frac{z_4-z_2}{\sqrt{2}},v_i\right\rangle \left\langle \frac{z_2}{\sqrt{2}},u_j\right\rangle, & i=3,4\\
\sum\limits_{k=3}^\infty\langle z_{2k-1},z_i\rangle \langle z_{2k-3},u_j\rangle+\sum\limits_{k=3}^\infty\langle z_{2k},z_i\rangle \langle z_{2k-2},u_j\rangle, & i\geq 5
\end{cases}\\
&=\begin{cases}
\left\langle \frac{z_1}{\sqrt{2}},u_j\right\rangle, & i=1,2\\
\left\langle \frac{z_2}{\sqrt{2}},u_j\right\rangle, & i=3,4\\
\langle z_{i-2},u_j\rangle, & i\geq 5
\end{cases}\\
&=\langle f_i,u_j\rangle,
\end{align*}
which shows that $\{w_i\}_{i \in \mathbb{N}}$ is a weak R-dual of $\{f_i\}_{i \in \mathbb{N}}$ with respect to $\{u_i\}_{i \in \mathbb{N}}$ and $\{v_i\}_{i \in \mathbb{N}}$.
\end{ex}

We shall now see another interesting characterization of weak R-duality without involving any condition on the dimensions of $(span\{w_j\})^\perp$ and $Ker(T_y)$. In Theorem \ref{thm7}, we have observed that there are scenarios when a given frame sequence $w$ cannot be a weak R-dual of a given frame $f$. However, the following theorem provides conditions which renders $w$ as a weak R-dual of a frame $f^\prime$, which is obtained by slightly modifying $f$.
For a sequence $h=\{h_i\}_{i \in I}$, we define two sequences, $h^\prime=\{h_i^\prime\}_{i \in I}$ and $h^{\prime\prime}=\{h_i^{\prime\prime}\}_{i \in I}$, associated with $h$, which are given by
\begin{equation*}
h_i^\prime
:= \begin{cases}
h_{\frac{i+1}{2}}, & \text{$i\ is\ odd$}\\
0, & \text{$i\ is\ even$}
\end{cases} \hspace{1em} and \hspace{1em} h_i^{\prime\prime}
:=
\begin{cases}
0, & \text{$i\ is\ odd$}\\
h_{\frac{i}{2}}, & \text{$i\ is\ even$}
\end{cases}.
\end{equation*}

\begin{thm}\label{thm3} Let $w$, $f$ and $u$ be as in the hypothesis of Theorem \ref{thm7}. Then, the sequence $\{y_i\}_{i \in I}$, given by (\ref{def of ni}), is a Parseval frame for $\overline{span}\{w_j\}_{j \in I}$ and (\ref{cond1}) holds if and only if there exists a Parseval frame $v=\{v_i\}_{i \in I}$ for $H$, not necessarily an orthonormal basis, such that $w$ is a weak R-dual of $f^\prime$ with respect to $u$ and $v$.
\end{thm}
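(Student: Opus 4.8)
The plan is to reduce the statement to Theorems \ref{thm1} and \ref{thm7}, applied not to $f$ but to its ``doubled'' companion $f'$. First I would record that $f'=\{f'_i\}_{i\in I}$, being the sequence $f$ with a zero vector inserted after each term, is again a frame for $H$ with the same bounds as $f$, since $\sum_{i}|\langle x,f'_i\rangle|^2=\sum_{l}|\langle x,f_l\rangle|^2$ for every $x\in H$. Consequently $w$, $f'$ and $u$ satisfy the hypotheses of both Theorem \ref{thm1} and Theorem \ref{thm7}, and it makes sense to speak of the auxiliary sequence attached to $f'$ through (\ref{def of ni}); call it $y'$.

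The computational heart of the argument is to relate $y'$ and the validity of (\ref{cond1}) for $f'$ to the corresponding objects for $f$. Since $f'_{2l}=0$ and $f'_{2l-1}=f_l$, a direct evaluation of (\ref{def of ni}) gives $y'_{2l-1}=y_l$ and $y'_{2l}=0$ for every $l$; that is, $y'$ is just $y$ with zeros interleaved. Hence $y'$ is a Parseval frame for $\overline{span}\{w_j\}_{j\in I}$ if and only if $y$ is, because discarding zero vectors does not affect the frame inequality. Likewise $G(u,f')$ is obtained from $G(u,f)$ by inserting zero columns, so left multiplication by $G(\widetilde{w},w)^t-\mathcal{I}$ shows that (\ref{cond1}) holds for $f'$ if and only if it holds for $f$. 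The decisive gain from doubling is the kernel count: since $T_{y'}(\{c_i\})=\sum_l c_{2l-1}y_l$, every sequence supported on the even indices lies in $Ker(T_{y'})$, whence $dim(Ker(T_{y'}))=\infty$ unconditionally.

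With these identities in hand I would prove the two implications. For the forward direction, assume $y$ is a Parseval frame for $\overline{span}\{w_j\}$ and (\ref{cond1}) holds; then $y'$ is a Parseval frame for $\overline{span}\{w_j\}$ and (\ref{cond1}) holds for $f'$. Because $dim(Ker(T_{y'}))=\infty$, the dimension hypothesis of Theorem \ref{thm7}(i) is automatic: either $dim((span\{w_j\})^\perp)<\infty=dim(Ker(T_{y'}))$, or $dim((span\{w_j\})^\perp)=dim(Ker(T_{y'}))=\infty$. In both cases Theorem \ref{thm7}(i) applied to $f'$ yields a Parseval frame $v$ (in fact not an orthonormal basis) for which $w$ is a weak R-dual of $f'$ with respect to $u$ and $v$. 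For the converse, suppose such a $v$ exists. Applying Theorem \ref{thm1} to $f'$, we obtain that (\ref{cond1}) holds for $f'$ and that $v_i=y'_i+x_i$ with $x_i\in(span\{w_j\})^\perp$; in particular $y'_i=Pv_i$, where $P$ is the orthogonal projection of $H$ onto $\overline{span}\{w_j\}$, so $y'$ is a Parseval frame for that subspace. Translating back through the two equivalences above gives that (\ref{cond1}) holds for $f$ and that $y$ is a Parseval frame for $\overline{span}\{w_j\}$, as required.

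The routine but error-prone part, and the place I would be most careful, is the index bookkeeping underpinning the two equivalences: correctly reading off $y'_{2l-1}=y_l$ and $y'_{2l}=0$ from (\ref{def of ni}), and verifying that inserting zero columns into $G(u,f)$ leaves the validity of (\ref{cond1}) intact. The only genuinely conceptual point is that the doubling is engineered precisely so that $Ker(T_{y'})$ is forced to be infinite dimensional, which is exactly what neutralizes the dimension hypotheses appearing in Theorems \ref{thm2} and \ref{thm7} and thereby lets the characterization hold with no constraint relating $dim((span\{w_j\})^\perp)$ and $dim(Ker(T_y))$.
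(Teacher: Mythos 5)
Your argument is correct, and its converse half is essentially the paper's own: apply Theorem \ref{thm1} with $f'$ in place of $f$, read off $v_i=y'_i+x_i$, project onto $\overline{span}\{w_j\}$ to see that $y'$ (hence $y$, after discarding the interleaved zeros) is a Parseval frame there, and observe that deleting the zero columns of $G(u,f')$ recovers (\ref{cond1}) for $f$. Where you genuinely diverge is the forward direction. The paper never invokes Theorem \ref{thm7}: it constructs $v$ explicitly by interleaving, setting $v_{2i-1}=y_i$ and $v_{2i}=q_i$ for an arbitrary Parseval frame $\{q_i\}_{i\in I}$ of $(span\{w_j\})^\perp$, checks in two lines that $v$ is Parseval for $H$ by splitting $z=z_1+z_2$ along $\overline{span}\{w_j\}\oplus(span\{w_j\})^\perp$, and then applies Theorem \ref{thm1} directly — your identity that the $y$-sequence attached to $f'$ is exactly $y$ with zeros interleaved shows this $v$ has the required form. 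You instead verify the hypotheses of Theorem \ref{thm7}(i) for $f'$, the decisive observation being that the zero-padding puts every even-supported $\ell^2$ sequence into $Ker(T_{y'})$, so $dim(Ker(T_{y'}))=\infty$ and the dimension hypothesis of Theorem \ref{thm7}(i) holds no matter what $dim((span\{w_j\})^\perp)$ is. This is legitimate and non-circular (Theorem \ref{thm7} precedes this statement and does not depend on it), and it yields a conceptual dividend the paper leaves implicit: it explains why the present characterization carries no condition relating $dim((span\{w_j\})^\perp)$ and $dim(Ker(T_y))$ — the doubling is engineered precisely to trivialize it. What the paper's route buys in exchange is economy and explicitness: it bypasses the isometry construction buried in the proof of Theorem \ref{thm7}(i), writes $v$ down in closed form, and exposes the freedom in choosing $q$ (choosing $q$ not an orthonormal basis makes $v$ not one, matching the subsequent Remark), whereas your $v$ obtained from Theorem \ref{thm7}(i) is automatically never an orthonormal basis — which still satisfies the ``not necessarily an orthonormal basis'' existence claim. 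One small point you use tacitly and could state: in the forward direction $y'$ is Parseval, hence Bessel, so $T_{y'}$ is bounded on $\ell^2(I)$ and $Ker(T_{y'})$ is well defined.
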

\begin{proof}
Suppose $\{y_i\}_{i \in I}$ is a Parseval frame for $\overline{span}\{w_j\}_{j \in I}$ and (\ref{cond1}) holds. Let $\{q_i\}_{i \in I}$ be any Parseval frame for $(span\{w_j\})^\perp$. Define $v=\{v_i\}_{i \in I}=\{y_i^\prime +q_i^{\prime\prime} \}_{i \in I}=
\begin{cases}
y_{\frac{i+1}{2}} , & i\ is\ odd\\
q_{\frac{i}{2}} ,& i\ is\ even
\end{cases}
$. Then, $v$ is a Parseval frame for $H$. In fact, for $z \in H$, we may write $z=z_1+z_2,$ where $z_1 \in \overline{span}\{w_j\}_{j \in I}$ and $z_2 \in (span\{w_j\})^\perp$. So,
\begin{align*}
\sum_{i \in I}|\big \langle z,v_i \big \rangle|^2&=\sum_{i \in I}|\big \langle z_1+z_2,v_{2i-1} \big \rangle|^2+\sum_{i \in I}|\big \langle z_1+z_2,v_{2i} \big \rangle|^2\\
&=\sum_{i \in I}|\big \langle z_1+z_2,y_i \big \rangle|^2+\sum_{i \in I}|\big \langle z_1+z_2,q_i \big \rangle|^2\\
&=\sum_{i \in I}|\big \langle z_1,y_i \big \rangle|^2+\sum_{i \in I}|\big \langle z_2,q_i \big \rangle|^2\\
&=\|z_1\|^2+\|z_2\|^2\\
&=\|z_1+z_2\|^2\\
&=\|z\|^2.
\end{align*} We note that if $\{q_i\}_{i \in I}$ is chosen to be a Parseval frame, which is not an orthonormal basis, then the sequence $v$, so defined, is also not an orthonormal basis. Hence, by Theorem \ref{thm1}, $w$ is a weak R-dual of $f^\prime$ with respect to the Parseval frames $u$ and $v$.
\par
Conversely, let us suppose that there exists a Parseval frame $v=\{v_i\}_{i \in I}$ for $H$ such that $w$ is a weak R-dual of $f^\prime$ with respect to the Parseval frames $u$ and $v$. Then, by Theorem \ref{thm1}, $(G(\widetilde{w},w)^t-\mathcal{I})G(u,f^\prime)=\bold{0}$ and obviously (\ref{cond1}) is also true. Moreover, $v_i=y_i^\prime+x_i$, with $x_i \in (span\{w_j\})^\perp$. For $g \in \overline{span}\{w_j\}$, we get
\begin{equation*}
\sum_{i \in I}\big|\langle g,y_i \rangle\big|^2=\sum_{i \in I}\big|\langle g,y_i^\prime \rangle\big|^2=\sum_{i \in I}\big|\langle g,v_i \rangle\big|^2=\big\|g\big\|^2,
\end{equation*}
thereby proving that $\{y_i\}_{i \in I}$ is a Parseval frame for $\overline{span}\{w_j\}$.
\qed\end{proof}

\begin{rem}
Under the hypothesis of the above theorem, if $\{y_i\}_{i \in I}$ is a Parseval frame for $\overline{span}\{w_j\}$, (\ref{cond1}) holds and $(span\{w_j\})^\perp$ has a Parseval frame consisting of only finitely many non-zero elements $\{g_i\}_{i=1}^n$, then $w$ is a weak R-dual of $f^*$ with respect to the Parseval frame $u$ and another Parseval frame $v=\{v_i\}_{i \in I}$, given by $v_i=y_i^*+q_i^{**}$, where $q_i=\begin{cases}
g_i, & \text{$1 \leq i \leq n$}\\
0, & \text{$i > n$}
\end{cases}$ \hspace{0.5em}and for a sequence $h=\{h_i\}_{i \in I}$, $h^*$ and $h^{**}$ are the sequences given by $h_i^*
:= \begin{cases}
h_{\frac{i+1}{2}}, & i\ is\ odd,\ 1 \leq i \leq 2n\\
0, & i\ is\ even,\ 1 \leq i \leq 2n\\
h_{\frac{i+k}{2}}, & i=2n+k,\ k \in \mathbb{N}
\end{cases}$ \hspace{0.5em}and \hspace{0.5em}$h_i^{**}
:=
\begin{cases}
0, & i\ is\ odd,\ 1 \leq i \leq 2n\\
h_{\frac{i}{2}}, & i\ is\ even,\ 1 \leq i \leq 2n\\
h_{\frac{i+k}{2}}, & i=2n+k,\ k \in \mathbb{N}
\end{cases}$ \hspace{0.5em} respectively.
\end{rem}

In order to obtain a given frame sequence $w$ as a weak R-dual of a given frame $f$, the following theorem shows that it is enough to look for some weak R-dual of $f$ under certain conditions.
\begin{thm} \label{thm6}
Let $w=\{w_j\}_{j \in I}$ be a frame sequence, $f=\{f_i\}_{i \in I}$ be a frame and $u=\{u_i\}_{i \in I}$ be a Parseval frame for $H$ such that (\ref{cond1}) holds. Suppose the sequence $\{y_i\}_{i \in I}$, given by (\ref{def of ni}), is a Parseval frame for $\overline{span}\{w_j\}_{j \in I}$ and there exists a weak R-dual $\{p_{j}\}_{j \in I}$ of $f$ with respect to $u$ and a Parseval frame $h=\{h_i\}_{i \in I}$ such that $dim((span\{w_{j}\})^\perp)\leq dim((span\{p_{j}\})^\perp)$. Then, there exists a coisometry $U:H \longrightarrow H$ such that $w_j=\sum\limits_{i \in I}\langle f_i,u_j \rangle U(h_i)$. In particular, if the dimensions are equal, then $w$ is a weak R-dual of $f$ with respect to the Parseval frames $u$ and $U(h)$.
\end{thm}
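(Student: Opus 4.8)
The plan is to reduce the statement to producing a coisometry that carries the weak R-dual $p$ onto $w$, and then to build that coisometry block-diagonally, using the dimension hypothesis only at the very end. First I would record two ``$y$-type'' sequences. Keep $y_i=\sum_k\langle u_k,f_i\rangle\widetilde w_k$ as in (\ref{def of ni}), and, since $p$ is a weak R-dual of $f$ with respect to $u$ and $h$, set $\eta_i=\sum_k\langle u_k,f_i\rangle\widetilde p_k$. Theorem \ref{thm1} then tells me that $\{\eta_i\}_{i\in I}$ is the orthogonal projection of $h$ onto $\overline{span}\{p_j\}$, hence a Parseval frame for it, and that $\langle f_i,u_j\rangle=\langle p_j,\eta_i\rangle$. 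The identity argument in the proof of Theorem \ref{thm1} (which needs only (\ref{cond1}) and the definition of $y$) likewise gives $\langle f_i,u_j\rangle=\langle w_j,y_i\rangle$, while $\{y_i\}_{i\in I}$ is Parseval for $\overline{span}\{w_j\}$ by hypothesis.

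Next I would compare the two analysis operators. Write $W=\overline{span}\{w_j\}$ and $\mathcal P=\overline{span}\{p_j\}$. Since $y$ and $\eta$ are Parseval frames for $W$ and $\mathcal P$, the restrictions $T_y^*|_W$ and $T_\eta^*|_{\mathcal P}$ are isometries, and both send their frame elements to the \emph{same} sequence $\xi_j=\{\langle f_i,u_j\rangle\}_{i\in I}$, namely $T_y^* w_j=\xi_j=T_\eta^* p_j$. Because $\{w_j\}$ and $\{p_j\}$ are total in $W$ and $\mathcal P$ and these isometries have closed range, both ranges equal $\overline{span}\{\xi_j\}_{j\in I}=:R$. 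Consequently $V:=(T_y^*|_W)^{-1}T_\eta^*|_{\mathcal P}$ is a well-defined unitary from $\mathcal P$ onto $W$ with $V p_j=w_j$ for every $j$.

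Then I would extend $V$ to all of $H$. Since $H=\mathcal P\oplus\mathcal P^\perp=W\oplus W^\perp$ and, by hypothesis, $\dim(W^\perp)=\dim((span\{w_j\})^\perp)\le\dim((span\{p_j\})^\perp)=\dim(\mathcal P^\perp)$, there is a coisometry $U':\mathcal P^\perp\to W^\perp$. Putting $U:=V\oplus U'$ (block-diagonal with respect to the two decompositions) gives a map on $H$ with $U|_{\mathcal P}=V$, and the short check $UU^*=VV^*\oplus U'U'^*=I_W\oplus I_{W^\perp}=I_H$ confirms that $U$ is a coisometry. As $U$ is bounded, $w_j=U p_j=U\big(\sum_i\langle f_i,u_j\rangle h_i\big)=\sum_i\langle f_i,u_j\rangle U h_i$, which is the desired identity. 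For the final assertion, when the two dimensions are equal the isometry $U'^*$ is onto, so $U'$ and hence $U$ is unitary; then $\langle U h_i,U h_k\rangle=\langle h_i,h_k\rangle$ yields $G(Uh,Uh)=G(h,h)$, so $(G(Uh,Uh)^t-\mathcal I)G(f,u)=(G(h,h)^t-\mathcal I)G(f,u)=\bold 0$ because $p$ is a weak R-dual of $f$ with respect to $u$ and $h$. Combined with $w_j=\sum_i\langle f_i,u_j\rangle U h_i$, this shows $w$ is a weak R-dual of $f$ with respect to $u$ and $U(h)$.

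The step I expect to be the main obstacle is the middle one: establishing that the two analysis-operator ranges coincide, so that $V$ is a genuine unitary from $\mathcal P$ onto $W$ with $V p_j=w_j$. This rests entirely on the twin identity $\langle f_i,u_j\rangle=\langle w_j,y_i\rangle=\langle p_j,\eta_i\rangle$ and on both $\{y_i\}$ and $\{\eta_i\}$ being Parseval frames for the respective closed spans; the dimension inequality itself plays no role here and enters only at the extension stage, precisely to guarantee the coisometric (or, in the equality case, unitary) continuation off $\mathcal P$.
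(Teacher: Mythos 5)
Your proof is correct, but it reaches the central unitary by a genuinely different route than the paper. The paper proves directly, by computation, that $\left\|\sum_{m}c_mw_m\right\|=\left\|\sum_{m}c_mp_m\right\|$ for every $\{c_m\}\in\ell^2(I)$: it expands $\left\|\sum_m c_mw_m\right\|^2$ against the Parseval frame $y$, interchanges iterated sums twice (each interchange justified via Theorem 1 of \cite{swartz1992iterated}), passes through the frame operator $S_f$ using \eqref{cond1}, and finally invokes the Gram condition $(G(h,h)^t-\mathcal{I})G(f,u)=\bold{0}$ from the weak R-duality of $p$ to recognize the result as $\left\|\sum_m c_mp_m\right\|^2$; the map $U_1\left(\sum_j c_jp_j\right)=\sum_j c_jw_j$ is then well defined and unitary. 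You instead apply Theorem \ref{thm1} to $p$ to produce the companion sequence $\eta_i=\sum_k\langle u_k,f_i\rangle\widetilde{p}_k$, the orthogonal projection of the Parseval frame $h$ onto $\mathcal{P}=\overline{span}\{p_j\}$ and hence Parseval there, and exploit the twin identities $\langle f_i,u_j\rangle=\langle w_j,y_i\rangle=\langle p_j,\eta_i\rangle$: the analysis isometries $T_y^*|_W$ (with $W=\overline{span}\{w_j\}$) and $T_\eta^*|_{\mathcal{P}}$ send $w_j$ and $p_j$ to the same sequence $\xi_j=\{\langle f_i,u_j\rangle\}_{i\in I}$, so both closed ranges equal $\overline{span}\{\xi_j\}$ and $V=(T_y^*|_W)^{-1}T_\eta^*|_{\mathcal{P}}$ is unitary with $Vp_j=w_j$. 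This constructs the same operator as the paper's $U_1$ while avoiding the series-interchange technicalities and the $S_f$ computation altogether; the structural price is invoking Theorem \ref{thm1} for $p$, which requires recording explicitly (as the paper does) that a weak R-dual of a frame is a frame sequence, so that the theorem's hypotheses are met. Your extension off $\mathcal{P}$ is the paper's block-diagonal construction verbatim. One wording point should be tightened in the equality case: an isometry between infinite-dimensional spaces of equal Hilbert dimension need not be onto (a shift is the standard counterexample), so rather than asserting that ``the isometry $U'^*$ is onto,'' you should say you \emph{choose} $U'$ to map an orthonormal basis of $\mathcal{P}^\perp$ bijectively onto one of $W^\perp$, making $U'$, and hence $U$, unitary --- which is exactly the choice made in the paper; with that fix, your transfer of the Gram condition via $G(Uh,Uh)=G(h,h)$ completes the final assertion correctly.
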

\begin{proof}
Let $\{c_j\}_{j \in I} \in \ell^2(I)$. As $\{y_i\}_{i \in I}$ is a Parseval frame for $\overline{span}\{w_j\}_{j \in I}$, we have
\begin{align*}
\left\|\sum\limits_{m \in I}c_mw_m \right\|^2&=\sum_{i \in I}\left|\left\langle \sum\limits_{m \in I}c_mw_m,y_i \right\rangle\right|^2 \\
&=\sum_{i \in I}\left|\left\langle \sum\limits_{m \in I}c_mw_m,\sum_{k \in I}\big\langle u_k,f_i \big\rangle \widetilde{w}_k \right\rangle\right|^2 \\
&=\sum_{i \in I}\left|\sum_{k \in I}\langle f_i,u_k \rangle \left\langle \sum\limits_{m \in I}c_mw_m,\widetilde{w}_k \right\rangle\right|^2 \\
&=\sum_{i \in I}\left(\sum_{k \in I} \langle f_i,u_k \rangle \left\langle \sum\limits_{m \in I}c_mw_m,\widetilde{w}_k \right\rangle\right)\left(\sum_{l \in I} \langle u_l,f_i \rangle \left\langle \widetilde{w}_l,\sum\limits_{n \in I}c_nw_n \right\rangle\right) \\
&=\sum_{i \in I}\left(\sum\limits_{m \in I}c_m\left(\sum_{k \in I} \langle f_i,u_k \rangle \left\langle w_m,\widetilde{w}_k \right\rangle\right)\right)\left(\sum\limits_{n \in I}\overline{c_n}\left(\sum_{l \in I} \langle u_l,f_i \rangle \left\langle \widetilde{w}_l,w_n \right\rangle\right)\right),
\end{align*}
with the change in the order of summation being possible, by applying Theorem 1 of \cite{swartz1992iterated}. For, taking $\{k_l\}_{l \in I}$ to be an increasing sequence of positive integers, we have
\begin{align*}
\sum\limits_{m \in I}\left|c_m\left(\sum_{l \in I} \langle f_i,u_{k_l} \rangle \langle w_m,\widetilde{w}_{k_l} \rangle\right) \right|&=\sum\limits_{m \in I}\left|c_m \left\langle w_m,\sum_{l \in I} \langle u_{k_l},f_i \rangle\widetilde{w}_{k_l} \right\rangle \right|\\
&\leq \left(\sum_{m \in I}|c_{m}|^2\right)^\frac{1}{2}\left(\sum_{m \in I}\left|\left\langle w_m,\sum_{l \in I} \langle u_{k_l},f_i \rangle\widetilde{w}_{k_l} \right\rangle\right|^2\right)^\frac{1}{2}\\
&\leq \|c\| \sqrt{B_w}\left\|\sum_{l \in I} \langle u_{k_l},f_i \rangle\widetilde{w}_{k_l}\right\|,
\end{align*} where $B_w$ is an upper frame bound of $w$. As $\{\langle u_{k_l},f_i \rangle\}_{l \in I} \in \ell^2(I)$, the right hand side of the above inequality is finite. Now, by (\ref{cond1}), we get
\begin{align*}
\left\|\sum\limits_{m \in I}c_mw_m \right\|^2&=\sum_{i \in I}\left(\sum\limits_{m \in I}c_m\langle f_i,u_m \rangle\right)\left(\sum\limits_{n \in I}\overline{c_n}\langle u_n,f_i \rangle\right) \\
&=\sum_{i \in I}\left \langle f_i,\sum\limits_{m \in I}\overline{c_m}u_m \right\rangle \left \langle \sum\limits_{n \in I}\overline{c_n}u_n,f_i \right\rangle\\
&=\left \langle \sum_{i \in I}\left \langle \sum\limits_{n \in I}\overline{c_n}u_n,f_i \right\rangle f_i,\sum\limits_{m \in I}\overline{c_m}u_m \right\rangle \\
&=\left\langle S_f\left(\sum\limits_{n \in I}\overline{c_n}u_n\right),\sum\limits_{m \in I}\overline{c_m}u_m \right\rangle \\
&=\sum\limits_{m \in I}c_m\left\langle S_f\left(\sum\limits_{n \in I}\overline{c_n}u_n\right),u_m \right\rangle\\
&=\sum\limits_{m \in I}c_m\sum\limits_{n \in I}\overline{c_n}\langle S_f(u_n),u_m \rangle\\
&=\sum\limits_{m \in I}c_m\sum\limits_{n \in I}\overline{c_n} \left\langle \sum_{i \in I}\langle u_n,f_i \rangle f_i,u_m \right\rangle \\
&=\sum\limits_{m \in I}c_m\sum\limits_{n \in I}\overline{c_n}\sum_{i \in I}\langle f_i,u_m \rangle\langle u_n,f_i \rangle \\
&=\sum\limits_{m \in I}c_m\sum\limits_{n \in I}\overline{c_n}\sum_{i \in I}\langle f_i,u_m \rangle \sum_{l \in I}\langle u_n,f_l \rangle \langle h_i,h_l \rangle,
\end{align*}
as $(G(h,h)^t-\mathcal{I})G(f,u)=\bold{0}$ from the definition of the weak R-duality of $\{p_{j}\}_{j \in I}$. Now,
\begin{align}
\left\|\sum\limits_{m \in I}c_mw_m \right\|^2&=\sum\limits_{m \in I}c_m\sum\limits_{n \in I}\overline{c_n}\left\langle \sum_{i \in I}\langle f_i,u_m \rangle h_i, \sum_{l \in I}\langle f_l,u_n \rangle h_l \right\rangle \nonumber\\
&=\sum\limits_{m \in I}c_m\sum\limits_{n \in I}\overline{c_n}\langle p_m,p_n \rangle\nonumber\\
&=\left\langle
\sum\limits_{m \in I}c_mp_m,\sum\limits_{n \in I}c_np_n \right\rangle \nonumber\\
&=\left\|\sum\limits_{m \in I}c_mp_m\right\|^2. \label{eq10}
\end{align}
As it is known that the weak R-dual of a frame is a frame sequence, we note that the sequence $\{p_{j}\}_{j \in I}$ is a frame sequence. Therefore, we may define an operator $U_1:\overline{span}\{p_j\}_{j \in I} \longrightarrow \overline{span}\{w_j\}_{j \in I}$ by $U_1\left(\sum\limits_{j \in I}c_jp_j\right)=\sum\limits_{j \in I}c_jw_j$, which is well-defined by (\ref{eq10}). It can be easily seen that $U_1$ is linear, isometry, onto and hence unitary.
\par
Suppose, $dim((span\{w_{j}\})^\perp)<dim((span\{p_{j}\})^\perp)$. Let $dim((span\{w_{j}\})^\perp)=n<\infty$. We may take $dim(span\{p_{j}\}_{j \in I}^\perp)=d$, which can be finite or infinite. So, we define an operator $U_2:(span\{p_j\})^\perp \longrightarrow (span\{w_j\})^\perp$ by $U_2(\phi_j)=
\begin{cases}
\psi_j, & \text{$1\leq j\leq n$}\\
0, & \text{$j>n$}
\end{cases}$, where $\{\phi_j\}$ and $\{\psi_j\}$ are orthonormal bases of $(span\{p_j\})^\perp$ and $(span\{w_j\})^\perp$ respectively, which is extended linearly to $(span\{p_{j}\}_{j \in I})^\perp$. One can easily see that $U_2$ is bounded. Further, its adjoint operator $U_2^*:(span\{w_{j}\})^\perp \longrightarrow (span\{p_{j}\})^\perp$ is given by $U_2^*\left(\sum\limits_{j=1}^n c_j\psi_j\right)=\sum\limits_{j=1}^nc_j\phi_j$, which is a linear isometry. Now, define an operator $U:=U_1\oplus U_2$ on $H$. In other words, $U(z_1+z_2)=U_1(z_1)+U_2(z_2)$, where $z_1 \in \overline{span}\{p_j\}_{j \in I}$ and $z_2 \in (span\{p_j\}_{j \in I})^\perp$. Then the adjoint operator $U^*$ can be written as $U^*=U_1^*\oplus U_2^*$. It can be easily verified that $U^*$ is an isometry and so $U$ is a coisometry.
\par
On the other hand, in the case when $dim((span\{w_{j}\})^\perp)=dim((span\{p_{j}\})^\perp)$  (finite or infinite), we define an operator $U_2:(span\{p_j\})^\perp \longrightarrow (span\{w_j\})^\perp$ by $U_2(\phi_j)=\psi_j$, where $\{\phi_j\}$ and $\{\psi_j\}$ are orthonormal bases of $(span\{p_j\}_{j \in I})^\perp$ and $(span\{w_j\}_{j \in I})^\perp$ respectively. This operator, extended linearly to $(span\{p_j\})^\perp$, is unitary. Further, define an operator $U:=U_1\oplus U_2$ on $H$, which is a unitary operator.
\par
Thus, in either case, we have a coisometry $U$ on $H$ such that $w_j=U(p_j)=U\left(\sum\limits_{i \in I}\langle f_i,u_j \rangle h_i \right)$ $=\sum\limits_{i \in I}\langle f_i,u_j \rangle U(h_i)$. Further, $U(h)$ is a Parseval frame for $H$, as $\sum\limits_{i \in I}\left|\langle q,U(h_i)\rangle\right|^2=\sum\limits_{i \in I}\left|\langle U^*(q),h_i\rangle\right|^2\\ =\|U^*(q)\|^2=\|q\|^2,\ \forall\ q \in H$. When the dimensions of $(span\{p_{j}\}_{j \in I})^\perp$ and $(span\{w_{j}\}_{j \in I})^\perp$ are equal, $(G(U(h),U(h))^t-\mathcal{I})G(f,u)=(G(h,h)^t-\mathcal{I})G(f,u)=\bold{0}$, thereby proving the theorem.
\qed\end{proof}

The existence of a Parseval frame $u$ satisfying (\ref{cond1}), for a given frame $f$ and frame sequence $w$, is assumed in the hypothesis of the above theorem. The following proposition shows that such a Parseval frame does exist. Moreover, the Parseval frame so constructed is not an orthonormal basis, which is crucial for true weak R-duality.
\begin{pro}
Let $w=\{w_j\}_{j \in I}$ be a frame sequence in $H$ and $f=\{f_i\}_{i \in I}$ be any sequence in $H$. Then, there exists a Parseval frame $u=\{u_i\}_{i \in I}$ (not an orthonormal basis) for $H$ satisfying $(G(\widetilde{w},w)^t-\mathcal{I})G(u,f)=\bold{0}$, where $\widetilde{w}=\{\widetilde{w}_j\}_{j \in I}$ is the canonical dual of $w$.
\end{pro}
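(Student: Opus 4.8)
The plan is to recast the matrix identity \eqref{cond1} as a geometric condition in $\ell^2(I)$ and then manufacture $u$ through its analysis operator, in the spirit of the constructions in Theorems \ref{thm7} and \ref{thm6}. First I would rewrite $G(\widetilde w,w)^t$ operator-theoretically: its $(j,k)$-entry is $\langle \widetilde w_k,w_j\rangle$, so its action on $\ell^2(I)$ is $c\mapsto\big(\sum_k\langle\widetilde w_k,w_j\rangle c_k\big)_j=T_w^*T_{\widetilde w}c=T_w^*S_w^{-1}T_wc$, where $S_w$ is the frame operator of $w$ on $V:=\overline{span}\{w_j\}$. This is exactly the orthogonal projection of $\ell^2(I)$ onto $\overline{T_w^*(V)}=(Ker(T_w))^\perp$. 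Since the $j$-th column of $G(u,f)$ is $\{\langle u_k,f_j\rangle\}_{k\in I}=T_u^*f_j$, condition \eqref{cond1} is equivalent to requiring that each such column be fixed by this projection, i.e. $T_u^*f_i\in(Ker(T_w))^\perp$ for every $i\in I$. This is the coordinate-free form of the relation $\langle y_i,w_j\rangle=\langle u_j,f_i\rangle$ already extracted in the proof of Theorem \ref{thm1}.

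Next I would parametrize the candidate as in Theorem \ref{thm7}: seek an isometry $U:H\longrightarrow\ell^2(I)$ and set $u_i:=U^*(e_i)$, where $\{e_i\}_{i\in I}$ is the standard orthonormal basis of $\ell^2(I)$. Then $u$ is automatically a Parseval frame with $T_u^*=U$, and $u$ fails to be an orthonormal basis precisely when $U$ is not surjective. Writing $W:=\overline{span}\{f_i\}_{i\in I}$ and $M:=(Ker(T_w))^\perp$, the whole problem reduces to producing a non-surjective isometry $U$ with $U(W)\subseteq M$. I would build $U$ by prescribing it separately on $W$ and on $W^\perp$: choose an isometry of $W$ into $M$, choose an isometry of $W^\perp$ into the orthogonal complement (inside $\ell^2(I)$) of that first image, and take the orthogonal sum, arranging that at least one coordinate direction is left uncovered so that $U$ is not onto. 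Reading off $u_i=U^*(e_i)$ and invoking the reformulation of the first step then yields a Parseval frame, not an orthonormal basis, satisfying \eqref{cond1}.

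The hard part will be the dimension bookkeeping in this last step: one must guarantee that $W$ can be placed isometrically inside $M=(Ker(T_w))^\perp=\overline{T_w^*(V)}$ while still leaving room both for an isometric copy of $W^\perp$ and for a free coordinate. This is exactly where the size of $(Ker(T_w))^\perp$ (equivalently, that $w$ spans an infinite-dimensional subspace) enters; in the degenerate case where $w$ is a Riesz sequence we have $Ker(T_w)=\{0\}$, so $M=\ell^2(I)$, condition \eqref{cond1} becomes vacuous, and any non-orthonormal Parseval frame $u$ for $H$ works. A fully explicit alternative mirroring Proposition \ref{pro1} is to take $u_i=\widetilde L(S_w^{-1/2}w_i)$ together with a complementary family in $V^\perp$: the conjugate-linear isometry $\widetilde L$ produces the exact identity $\sum_k\langle\widetilde w_k,w_j\rangle\,\widetilde L(S_w^{-1/2}w_k)=\widetilde L(S_w^{-1/2}w_j)$, which already forces \eqref{cond1} for the $V$-part and for \emph{every} $f$; the remaining difficulty there is to attach the $V^\perp$-components at the same indices so that the combined family is a Parseval frame for $H$, not an orthonormal basis, without disturbing \eqref{cond1}.
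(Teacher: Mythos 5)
Your proposal is correct in outline but takes a genuinely different route from the paper. The paper splits into two cases: if $w$ is a Riesz sequence, then $G(\widetilde{w},w)^t=\mathcal{I}$ by biorthogonality and any non-orthonormal Parseval frame works (your degenerate case); otherwise it builds a conjugate-linear \emph{surjective} isometry $\widetilde{L}:H\longrightarrow\overline{span}\{w_j\}$ out of two orthonormal bases and sets $u_i:=\widetilde{L}^{-1}(S_w^{-1/2}w_i)$. The identity you record then yields the required relation for \emph{every} $f$, and because $\widetilde{L}^{-1}$ maps $\overline{span}\{w_j\}$ \emph{onto} $H$, the family $u$ is already a Parseval frame for all of $H$, non-orthonormal since $\langle u_i,u_j\rangle=\langle S_w^{-1}w_j,w_i\rangle\neq\delta_{ij}$ when $w$ is not Riesz. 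This is exactly how the paper sidesteps the gluing problem you flag in your explicit alternative: attaching $V^\perp$-components is indeed delicate, since the added $x_k$ contribute $\sum_k\langle\widetilde{w}_k,w_j\rangle\langle x_k,f_i\rangle$, which need not equal $\langle x_j,f_i\rangle$ for arbitrary $f$. Your main route (projection reformulation plus $u_i=U^*(e_i)$) is sound modulo two points. First, a conjugation slip: the $i$-th column of $G(u,f)$ is $\{\langle u_k,f_i\rangle\}_{k\in I}$, the entrywise conjugate of $T_u^*f_i=Uf_i$, so the target subspace should be $(Ker^*(T_w))^\perp$, with $Ker^*(T_w)=\{\overline{\bold{c}}:\bold{c}\in Ker(T_w)\}$, rather than $(Ker(T_w))^\perp$; this is harmless, as conjugation preserves orthogonality and dimension. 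Second, the dimension bookkeeping closes precisely when $\overline{span}\{w_j\}$ is infinite-dimensional: embed an orthonormal basis of $W$ onto alternate basis vectors of $M$, leaving $\ell^2(I)\ominus U(W)$ infinite-dimensional, with room for $W^\perp$ and a spare direction ensuring non-surjectivity. That hypothesis is genuinely needed — for $w=\{z_1,0,0,\dots\}$ and complete $f$, the condition forces $u_k=0$ for all $k\geq 2$, so no Parseval frame exists — and the paper's proof assumes it silently as well, since its $\widetilde{L}$ requires an orthonormal basis of $\overline{span}\{w_j\}$ indexed by $I$; you at least flag this point explicitly, so on this score your plan is more careful than the paper, while the paper's construction buys a one-stroke verification with no dimension counting and no gluing.
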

\begin{proof}
If $w$ is a Riesz sequence, then the proof is trivial as $(G(\widetilde{w},w)^t-\mathcal{I})=\bold{0}$, by the biorthogonality of $w$ and $\widetilde{w}$. So, we shall assume that $w$ is a frame sequence but not a Riesz sequence. Define an operator $\widetilde{L}:H \longrightarrow \overline{span}\{w_j\}$ by $\widetilde{L}\left(\sum\limits_{i \in I}c_ih_i\right)=\sum\limits_{i \in I}\overline{c_i}g_i$, where $\{h_i\}_{i \in I}$ and $\{g_i\}_{i \in I}$ are orthonormal bases of $H$ and $\overline{span}\{w_j\}$ respectively, and $\{c_i\}_{i \in I} \in \ell^2(I)$. This operator is a conjugate-linear and surjective isometry. For $x \in H$ and $z \in \overline{span}\{w_j:j \in I\}$, we get $\langle \widetilde{L}x,z \rangle=\langle (\widetilde{L})^{-1}z,x \rangle$, following the lines of the proof of Proposition \ref{pro1}. Consequently,
\begin{align*}
\sum_{k \in I}\big\langle \widetilde{w}_k,w_i \big\rangle \big\langle \widetilde{L}^{-1}(S_w^{-1/2}w_k),f_j \big\rangle&=\sum_{k \in I}\big\langle \widetilde{w}_k,w_i \big\rangle \big\langle \widetilde{L}(f_j),S_w^{-1/2}w_k \rangle\\
&=\left\langle\sum_{k \in I}\big\langle \widetilde{L}(f_j),S_w^{-1/2}w_k \rangle \widetilde{w}_k,w_i \right\rangle\\
&=\left\langle\sum_{k \in I}\big\langle S_w^{-1/2}\widetilde{L}(f_j),w_k \rangle \widetilde{w}_k,w_i \right\rangle\\
&=\langle S_w^{-1/2}\widetilde{L}(f_j),w_i \rangle\\
&=\langle\widetilde{L}(f_j),S_w^{-1/2}w_i \rangle\\
&=\langle\widetilde{L}^{-1}(S_w^{-1/2}w_i),f_j \rangle,
\end{align*}
which shows that $(G(\widetilde{w},w)^t-\mathcal{I})G(u,f)=\bold{0}$, wherein $u_i=\widetilde{L}^{-1}(S_w^{-1/2}w_i), i \in I$. As $w$ is not a Riesz sequence, we can prove that $\{\widetilde{L}^{-1}(S_w^{-1/2}w_i)\}_{i \in I}$ is a Parseval frame for $H$ but not an orthonormal basis, as done in Proposition \ref{pro1}.
\qed\end{proof}

The existence of weak R-dual as discussed in Theorems \ref{thm2}, \ref{thm7}, \ref{thm3} and \ref{thm6} involves the condition that $\{y_i\}_{i \in I}$, defined in (\ref{def of ni}), is a Parseval frame for $\overline{span}\{w_j\}$. The following theorem provides a characterization for the sequence $\{y_i\}_{i \in I}$ to be a Parseval frame for $\overline{span}\{w_j\}$.
\begin{thm}\label{thm9}
Let $H$ be a complex separable Hilbert space. Suppose $w=\{w_j\}_{j \in I}$ is a frame sequence and $f=\{f_i\}_{i \in I}$ is a frame for $H$. Then, there exists a Parseval frame $u=\{u_k\}_{k \in I}$ for $H$ satisfying (\ref{cond1}) and the sequence $\{y_i\}_{i \in I}$, given by (\ref{def of ni}), is a Parseval frame for $\overline{span}\{w_j\}$ if and only if there exists a conjugate-linear bounded invertible operator $\widetilde{L}$ from $H$ to $\overline{span}\{w_j\}$ such that the frame operators $S_w$ of $w$ and $S_f$ of $f$ satisfy $S_w=(\widetilde{L}(\widetilde{L})^*)^{-1}$ and $S_f=((\widetilde{L})^*\widetilde{L})^{-1}$ respectively.
\end{thm}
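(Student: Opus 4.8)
The statement is an equivalence, and in both directions the bridge is a single bounded conjugate-linear operator $\widetilde{L}\colon H\to W$, where $W:=\overline{span}\{w_j\}_{j\in I}$. Throughout I would use the conjugate-linear adjoint $\widetilde{L}^{\ast}$, characterized (as used implicitly in Proposition \ref{pro1}) by $\langle \widetilde{L}x,g\rangle=\langle \widetilde{L}^{\ast}g,x\rangle$; then $\widetilde{L}^{\ast}\widetilde{L}$ and $\widetilde{L}\widetilde{L}^{\ast}$ are linear and positive, with $\langle \widetilde{L}^{\ast}\widetilde{L}x,x\rangle=\|\widetilde{L}x\|^{2}$ and $\langle \widetilde{L}\widetilde{L}^{\ast}g,g\rangle=\|\widetilde{L}^{\ast}g\|^{2}$. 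I write $\{S_f^{-1}f_i\}$ for the canonical dual of $f$ and $C$ for coordinate-wise conjugation on $\ell^2(I)$.

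Assume first that such an $\widetilde{L}$ exists, and set $u_i:=\widetilde{L}^{\ast}w_i$. From $S_w=(\widetilde{L}\widetilde{L}^{\ast})^{-1}$ one gets $\widetilde{L}^{\ast}S_w\widetilde{L}=I_H$, so $\sum_i|\langle x,u_i\rangle|^2=\sum_i|\langle \widetilde{L}x,w_i\rangle|^2=\langle \widetilde{L}^{\ast}S_w\widetilde{L}x,x\rangle=\|x\|^2$, i.e.\ $u$ is Parseval. Since $\langle u_k,f_i\rangle=\langle \widetilde{L}f_i,w_k\rangle$, reconstruction in $W$ collapses the defining sum (\ref{def of ni}) to $y_i=\widetilde{L}f_i$; then (\ref{cond1}) is immediate from $\langle y_i,w_j\rangle=\langle \widetilde{L}f_i,w_j\rangle=\langle u_j,f_i\rangle$, and $\widetilde{L}S_f\widetilde{L}^{\ast}=I_W$ (from $S_f=(\widetilde{L}^{\ast}\widetilde{L})^{-1}$) yields $\sum_i|\langle g,y_i\rangle|^2=\|g\|^2$ for $g\in W$, so $y$ is a Parseval frame for $W$. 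This settles that direction.

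Conversely, given the data $u,y$, the plan is to \emph{construct} $\widetilde{L}$. I would define the bounded conjugate-linear map $\widetilde{L}x:=\sum_k\langle u_k,x\rangle\widetilde{w}_k$ (bounded because $u$ is Parseval and $\widetilde{w}$ is Bessel), so that $\widetilde{L}f_i=y_i$ by (\ref{def of ni}). Expanding $x=\sum_i\langle x,S_f^{-1}f_i\rangle f_i$ and invoking (\ref{cond1}) gives $\langle \widetilde{L}x,w_j\rangle=\langle u_j,x\rangle$, hence $\widetilde{L}^{\ast}w_j=u_j$; a symmetric computation produces the companion formula $\widetilde{L}^{\ast}g=\sum_i\langle y_i,g\rangle S_f^{-1}f_i$ for $g\in W$. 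Evaluating the two products on the frames then gives $\widetilde{L}\widetilde{L}^{\ast}w_j=\widetilde{L}u_j=\sum_k\langle u_k,u_j\rangle\widetilde{w}_k$ and $\widetilde{L}^{\ast}\widetilde{L}f_j=\widetilde{L}^{\ast}y_j=\sum_i\langle y_i,y_j\rangle S_f^{-1}f_i$, so it suffices to establish the two reproducing identities $\sum_k\langle u_k,u_j\rangle w_k=w_j$ and $\sum_i\langle y_i,y_j\rangle f_i=f_j$: applying $S_w^{-1}$ and $S_f^{-1}$ respectively then gives $\widetilde{L}\widetilde{L}^{\ast}w_j=S_w^{-1}w_j$ and $\widetilde{L}^{\ast}\widetilde{L}f_j=S_f^{-1}f_j$, and totality of $\{w_j\}$ in $W$ and of $\{f_j\}$ in $H$ upgrades these to $S_w=(\widetilde{L}\widetilde{L}^{\ast})^{-1}$ and $S_f=(\widetilde{L}^{\ast}\widetilde{L})^{-1}$. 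Invertibility of $\widetilde{L}$ follows at once, since $\widetilde{L}^{\ast}\widetilde{L}=S_f^{-1}$ is bounded below (so $\widetilde{L}$ is injective with closed range) and $\widetilde{L}\widetilde{L}^{\ast}=S_w^{-1}$ is onto $W$ (so $\widetilde{L}$ is surjective).

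The two reproducing identities are the main obstacle, and both reduce to kernel inclusions for the synthesis operators. For the first, put $\beta:=e_j-\{\langle u_k,u_j\rangle\}_k$; since $u$ is Parseval, $T_u\overline{\beta}=u_j-T_uT_u^{\ast}u_j=0$, so $\overline{\beta}\in Ker(T_u)$, and $\sum_k\langle u_k,u_j\rangle w_k=w_j$ is precisely $T_w\beta=0$, i.e.\ $\beta=C\overline{\beta}\in Ker(T_w)$. Thus everything hinges on the dependency-compatibility inclusions $C(Ker(T_u))\subseteq Ker(T_w)$ and, symmetrically, $C(Ker(T_y))\subseteq Ker(T_f)$. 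These flow from (\ref{cond1}): if $\sum_k d_ku_k=0$ then $\sum_k d_k\langle u_k,f_i\rangle=0$ for every $i$, and rewriting through $\langle u_k,f_i\rangle=\langle y_i,w_k\rangle$ gives $\langle y_i,\sum_k\overline{d_k}w_k\rangle=0$ for all $i$, whence completeness of the Parseval frame $y$ in $W$ forces $\sum_k\overline{d_k}w_k=0$; the other inclusion is identical, now using $\langle y_i,w_l\rangle=\langle u_l,f_i\rangle$ and completeness of $u$ in $H$. As in the earlier proofs, the interchanges of summation are justified by Theorem 1 of \cite{swartz1992iterated}, and the conjugate-linear bookkeeping — tracking which inner-product slot carries the conjugation — is where I expect the greatest care to be required.
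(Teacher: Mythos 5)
Your proposal is correct, and although it manufactures the same objects as the paper --- your $\widetilde{L}x=\sum_{k}\langle u_k,x\rangle\widetilde{w}_k$ is the paper's map $\widetilde{L}\bigl(\sum_k c_ku_k\bigr)=\sum_k\overline{c_k}\widetilde{w}_k$, and your converse choice $u_k=(\widetilde{L})^{\ast}w_k$ equals the paper's $u_k=(\widetilde{L})^{-1}\widetilde{w}_k$ because $\widetilde{L}(\widetilde{L})^{\ast}=S_w^{-1}$ --- the key verifications follow a genuinely different route. To get $S_f=((\widetilde{L})^{\ast}\widetilde{L})^{-1}$ the paper expands the quadratic form $\|g\|^2=\sum_i|\langle g,y_i\rangle|^2$ into iterated series, interchanges summation twice via Swartz's theorem, and reaches $\langle g,g\rangle=\langle g,\widetilde{L}S_f(\widetilde{L})^{-1}S_w^{-1}g\rangle$ (see (\ref{eq8})--(\ref{eq12})), re-running essentially the same computation in the converse; it must also separately prove $\widetilde{L}$ well defined and bijective through the kernel identity (\ref{eq9}). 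You instead derive both factorizations from the Gram reproducing identities $\sum_k\langle u_k,u_j\rangle w_k=w_j$ and $\sum_i\langle y_i,y_j\rangle f_i=f_j$, obtained from Parsevalness together with the conjugated kernel inclusions $C(Ker(T_u))\subseteq Ker(T_w)$ and $C(Ker(T_y))\subseteq Ker(T_f)$ --- the same mechanism ((\ref{eq6}) plus completeness of $y$ in $\overline{span}\{w_j\}$, respectively of $u$ in $H$) that underlies the paper's (\ref{eq9}) --- and conclude by totality of $\{w_j\}$ in $\overline{span}\{w_j\}$ and of $\{f_j\}$ in $H$; invertibility of $\widetilde{L}$ then falls out a posteriori, from $(\widetilde{L})^{\ast}\widetilde{L}=S_f^{-1}$ being bounded below and $\widetilde{L}(\widetilde{L})^{\ast}=S_w^{-1}$ being onto, with no circularity since the operator identities are established on total sets first. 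Your route buys three things: the direct, absolutely convergent definition of $\widetilde{L}$ sidesteps the well-definedness argument; the direction ``$\widetilde{L}$ exists $\Rightarrow$ data'' collapses to a few lines via $y_i=\widetilde{L}f_i$ and $(\widetilde{L})^{\ast}S_w\widetilde{L}=I_H$, $\widetilde{L}S_f(\widetilde{L})^{\ast}=I$ on $\overline{span}\{w_j\}$, where the paper repeats its series computations; and almost all Swartz-type interchanges disappear (even the one you invoke for $\langle\widetilde{L}x,w_j\rangle=\langle u_j,x\rangle$ is avoidable by applying the continuous map $\widetilde{L}$ to $x=\sum_i\langle x,S_f^{-1}f_i\rangle f_i$, since $\widetilde{L}f_i=y_i$ and $\langle y_i,w_j\rangle=\langle u_j,f_i\rangle$ is (\ref{eq6}) verbatim). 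A pleasing byproduct: your first reproducing identity is exactly the paper's condition (\ref{cond2}), $(G(u,u)^t-\mathcal{I})w=\mathbf{0}$, which the paper proves in Section 3 from (\ref{cond1}) and completeness of $y$ by precisely your argument, yet never feeds back into its own proof of this theorem; your second identity is its natural dual. The only places demanding the care you flag are the conjugate-linear adjoint bookkeeping (e.g.\ $\|\widetilde{L}x\|^2=\langle(\widetilde{L})^{\ast}\widetilde{L}x,x\rangle$ under the convention $\langle\widetilde{L}x,g\rangle=\langle(\widetilde{L})^{\ast}g,x\rangle$) and the companion formula $(\widetilde{L})^{\ast}g=\sum_i\langle y_i,g\rangle S_f^{-1}f_i$, which you only need at $g=y_j$, where it follows from $\widetilde{L}x=\sum_i\overline{\langle x,S_f^{-1}f_i\rangle}\,y_i$ and the Bessel property of $y$ without any interchange.
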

\begin{proof}
Suppose that there exists a Parseval frame $u=\{u_i\}_{i \in I}$ satisfying (\ref{cond1}) and $\{y_i\}_{i \in I}$ is a Parseval frame for $\overline{span}\{w_j\}$. Define a map $\widetilde{L}:H\longrightarrow \overline{span}\{w_k:k \in I\}$ by $\widetilde{L}x=\widetilde{L}\left(\sum\limits_{k \in I}c_ku_k\right)=\sum\limits_{k \in I}\overline{c_k}\widetilde{w}_k$, where $\{c_k\}_{k \in I} \in \ell^2(I)$ and $\widetilde{w}=\{\widetilde{w}_k\}_{k \in I}$ is the canonical dual of $w$. It is easy to see that this map is well-defined. In fact, using (\ref{eq6}), we have $\left\langle \sum\limits_{j \in I}c_jw_j,y_i \right\rangle=\left\langle f_i,\sum\limits_{j \in I}\overline{c_j}u_j \right\rangle$. Denoting $Ker^*(T_u)$ to be the set $\{\{\overline{c_k}\}:\{c_k\} \in Ker(T_u)\}$, we then obtain
\begin{equation}\label{eq9}
Ker^*(T_u)=Ker(T_w)=Ker(T_{\widetilde{w}}).
\end{equation} Clearly, $\widetilde{L}$ is conjugate-linear. Also, $\left\|\widetilde{L}\left(\sum\limits_{k \in I}c_ku_k\right)\right\|=\left\|\sum\limits_{k \in I}\overline{c_k}\widetilde{w}_k\right\|\leq \frac{1}{\sqrt{A_w}}\|c\|=\frac{1}{\sqrt{A_w}}\left\|\sum\limits_{k \in I}c_ku_k\right\|$, where $A_w$ is a lower frame bound of $w$ and so $\widetilde{L}$ is bounded. Now, $\widetilde{L}$ is one-to-one for, if $x=\sum\limits_{k \in I}c_ku_k \in Ker(\widetilde{L})$, then $\{\overline{c_k}\}_{k \in I} \in Ker(T_{\widetilde{w}})$ and hence $\{\overline{c_k}\}_{k \in I} \in Ker^*(T_u)$, using (\ref{eq9}), which gives $x=0$. Obviously, the map $\widetilde{L}$ is onto and hence invertible.\\
\indent For $x \in H$ and $z \in \overline{span}\{w_k:k \in I\}$, we get
\begin{align}
\langle \widetilde{L}x,z \rangle&=\left\langle \widetilde{L}\left(\sum_{k \in I} \langle x,u_k \rangle u_k\right),z \right\rangle
=\left\langle \sum_{k \in I} \langle u_k,x \rangle \widetilde{w}_k,z \right\rangle
=\sum_{k \in I} \langle u_k,x \rangle \langle \widetilde{w}_k,z \rangle \nonumber\\
&=\left\langle \sum_{k \in I} \langle \widetilde{w}_k,z \rangle u_k,x \right\rangle
=\left\langle (\widetilde{L})^{-1}\left(\sum_{k \in I} \langle z,\widetilde{w}_k \rangle \widetilde{w}_k\right),x \right\rangle \nonumber\\
&=\langle (\widetilde{L})^{-1}S_w^{-1}z,x \rangle, \label{eq11}
\end{align}
from which we obtain, $S_w=(\widetilde{L}(\widetilde{L})^*)^{-1}$. Now, for $g \in \overline{span}\{w_j\}$, consider
\begin{align*}
\langle g,g \rangle&=\sum_{i \in I}\left|\langle g,y_i \rangle\right|^2\\
&=\sum_{i \in I}\left|\left\langle g,\sum_{k \in I}\big\langle u_k,f_i \big\rangle \widetilde{w}_k \right\rangle\right|^2\\
&=\sum_{i \in I}\left|\sum_{k \in I}\langle f_i,u_k \rangle \langle g,\widetilde{w}_k \rangle\right|^2\\
&=\sum_{i \in I}\sum_{k \in I} \langle f_i,u_k \rangle \big\langle g,\widetilde{w}_k \big\rangle\sum_{l \in I} \langle u_l,f_i \rangle \big\langle \widetilde{w}_l,g \big\rangle.
\end{align*} In order to change the order of summation above, we let $\{k_j\}_{j \in I}$ to be an increasing sequence of positive integers and consider,
\begin{align*}
\sum_{i \in I}\left|\left\langle f_i,\sum_{j \in I}\langle \widetilde{w}_{k_j},g\rangle u_{k_j}\right\rangle \left\langle \sum_{l \in I}\langle \widetilde{w}_l,g\rangle u_l,f_i \right\rangle\right|&\leq \left(\sum_{i \in I}\left|\left\langle f_i,\sum_{j \in I}\langle \widetilde{w}_{k_j},g\rangle u_{k_j}\right\rangle \right|^2 \right)^\frac{1}{2}\\ &\qquad \quad\left(\sum_{i \in I}\left|\left\langle \sum_{l \in I}\langle \widetilde{w}_l,g\rangle u_l,f_i \right\rangle\right|^2 \right)^\frac{1}{2}\\
&\leq B_f \left\|\sum_{j \in I}\langle \widetilde{w}_{k_j},g\rangle u_{k_j}\right\| \left\|\sum_{l \in I}\langle \widetilde{w}_l,g\rangle u_l\right\|\\
&\leq B_f \left(\sum_{j \in I}|\langle \widetilde{w}_{k_j},g\rangle|^2\right)^\frac{1}{2} \left(\sum_{l \in I}|\langle \widetilde{w}_l,g\rangle|^2\right)^\frac{1}{2}\\
&\leq B_f \frac{1}{A_w} \|g\|^2<\infty,
\end{align*} where $B_f$ and $A_w$ are the upper and lower frame bounds of $f$ and $w$ respectively. By Theorem 1 of \cite{swartz1992iterated}, we get
\begin{align*}
\langle g,g\rangle&=\sum_{k \in I}\left(\sum_{i \in I}\sum_{l \in I}\langle f_i,u_k \rangle \langle u_l,f_i \rangle \big\langle \widetilde{w}_l,g \big\rangle\right)\big\langle g,\widetilde{w}_k \big\rangle.
\end{align*} In a similar manner, we may change the order of summation once again and obtain
\begin{align}
\langle g,g\rangle&=\sum_{k \in I}\sum_{l \in I}\sum_{i \in I} \langle f_i,u_k \rangle \langle u_l,f_i \rangle\big\langle g,\widetilde{w}_k \big\rangle \big\langle \widetilde{w}_l,g \big\rangle \nonumber\\
&=\sum_{k \in I}\sum_{l \in I}\left\langle u_l,\sum_{i \in I}  \langle u_k,f_i \rangle f_i \right\rangle\big\langle g,\widetilde{w}_k \big\rangle \big\langle \widetilde{w}_l,g \big\rangle \nonumber\\
&=\sum_{k \in I}\sum_{l \in I}\langle u_l,S_fu_k\rangle\big\langle g,\widetilde{w}_k \big\rangle \big\langle \widetilde{w}_l,g \big\rangle \nonumber\\
&=\sum_{k \in I}\sum_{l \in I}\langle u_l,S_fu_k\rangle \big\langle g,\widetilde{L}u_k \big\rangle \big\langle \widetilde{L}u_l,g \big\rangle.\label{eq8}
\end{align}
Using (\ref{eq11}) in (\ref{eq8}), we have
\begin{align}
\langle g,g \rangle&=\sum_{k \in I}\sum_{l \in I}\langle u_l,S_fu_k\rangle\big\langle u_k,(\widetilde{L})^{-1}S_w^{-1}g \big\rangle \big\langle (\widetilde{L})^{-1}S_w^{-1}g,u_l \big\rangle \nonumber\\
&=\sum_{k \in I}\left\langle\sum_{l \in I}\big\langle (\widetilde{L})^{-1}S_w^{-1}g,u_l \big\rangle u_l,S_fu_k\right\rangle\big\langle u_k,(\widetilde{L})^{-1}S_w^{-1}g \big\rangle  \nonumber\\
&=\sum_{k \in I}\big\langle (\widetilde{L})^{-1}S_w^{-1}g,S_fu_k \big\rangle\big\langle u_k,(\widetilde{L})^{-1}S_w^{-1}g \big\rangle \nonumber \\
&=\sum_{k \in I}\big\langle S_f (\widetilde{L})^{-1}S_w^{-1}g,u_k \big\rangle \big\langle u_k,(\widetilde{L})^{-1}S_w^{-1}g \big\rangle  \nonumber\\
&=\left\langle S_f (\widetilde{L})^{-1}S_w^{-1}g,\sum_{k \in I}\big\langle(\widetilde{L})^{-1}S_w^{-1}g,u_k \big\rangle u_k\right\rangle \nonumber \\
&=\langle S_f (\widetilde{L})^{-1}S_w^{-1}g,(\widetilde{L})^{-1}S_w^{-1}g\rangle \nonumber \\
&=\langle g,\widetilde{L}S_f (\widetilde{L})^{-1}S_w^{-1}g\rangle. \label{eq12}
\end{align}
The last equality follows from (\ref{eq11}) by taking $x=S_f (\widetilde{L})^{-1}S_w^{-1}g$ and $z=g$. Thus, $\widetilde{L}S_f (\widetilde{L})^{-1}S_w^{-1}\\ =I_d$ and hence $S_f=((\widetilde{L})^*\widetilde{L})^{-1}$.
\par
Conversely, let us suppose that there exists a conjugate-linear bounded invertible operator $\widetilde{L}$ such that $S_w=(\widetilde{L}(\widetilde{L})^*)^{-1}$ and $S_f=((\widetilde{L})^*\widetilde{L})^{-1}$. Define $u_k:=(\widetilde{L})^{-1}\widetilde{w}_k$. Then $\{u_k\}_{k \in I}$ is a frame for $H$. Moreover,
\begin{equation*}
\sum_{k \in I}\langle h,u_k \rangle u_k=\sum_{k \in I}\langle h,(\widetilde{L})^{-1}\widetilde{w}_k \rangle (\widetilde{L})^{-1}\widetilde{w}_k=(\widetilde{L})^{-1}\left(\sum_{k \in I}\langle \widetilde{L}h,w_k \rangle \widetilde{w}_k\right)=(\widetilde{L})^{-1}\widetilde{L}h=h,\ \forall\ h \in H,
\end{equation*}
which implies that $\{u_k\}_{k \in I}$ is a Parseval frame for $H$. Further,
\begin{align*}
\sum_{k \in I}&\langle \widetilde{w}_k,w_j \rangle \langle u_k,f_i \rangle=\sum_{k \in I}\langle \widetilde{w}_k,w_j \rangle \langle (\widetilde{L})^{-1}\widetilde{w}_k,f_i \rangle
=\left\langle\sum_{k \in I}\langle (\widetilde{L})^{-1}\widetilde{w}_k,f_i \rangle \widetilde{w}_k,w_j \right\rangle\\
&=\left\langle\sum_{k \in I}\langle \widetilde{L}f_i,w_k \rangle \widetilde{w}_k,w_j \right\rangle
=\langle \widetilde{L}f_i,w_j \rangle
=\langle (\widetilde{L})^{-1}\widetilde{w}_j,f_i \rangle
=\langle u_j,f_i \rangle,
\end{align*}
which gives $(G(\widetilde{w},w)^t-\mathcal{I})G(u,f)=\bold{0}$. Using the steps in the proofs of (\ref{eq8}) and (\ref{eq12}) as well as the relations $S_w=(\widetilde{L}(\widetilde{L})^*)^{-1}$ and $S_f=((\widetilde{L})^*\widetilde{L})^{-1}$, we obtain for $g \in \overline{span}\{w_j\}_{j \in I}$,
\begin{equation*}
\sum_{i \in I}\left|\langle g,y_i \rangle\right|^2=\langle g,\widetilde{L}S_f (\widetilde{L})^{-1}S_w^{-1}g\rangle=\langle g,g \rangle,
\end{equation*}
which shows that $\{y_i\}_{i \in I}$ is a Parseval frame for $\overline{span}\{w_j\}_{j \in I}$.
\qed\end{proof}

\section{On the characterizing sequence `y'}
We have seen in the previous section that the sequence $y=\{y_i\}_{i \in I}$, defined by (\ref{def of ni}), plays a substantial role in characterizing weak R-duality. We have also given a characterization for $y$ to be a Parseval frame for $\overline{span}\{w_i\}_{i \in I}$ in Theorem \ref{thm9}. In this section, we analyse a few aspects related to this sequence and also show that a condition which arises in this connection turns out to be necessary for weak R-duality. From the definition of the sequence $\{y_i\}_{i \in I}$, it is clear that $\overline{span}\{y_i:i \in I\}\subset \overline{span}\{w_i:i \in I\}$. We remark that in general, the sequence $\{y_i\}_{i \in I}$ need not be complete in $\overline{span}\{w_i\}_{i \in I}$, even if $u$ is a Parseval frame for $H$. An example which shows this is as follows.
\begin{ex}\label{ex1}Let $\{z_i\}_{i \in \mathbb{N}}$ be an orthonormal basis for $H$ and
\begin{align*}
\{u_i\}_{i \in \mathbb{N}}&=\left\{\frac{z_1}{\sqrt{2}},\frac{z_1}{\sqrt{2}},\frac{z_2}{\sqrt{2}},\frac{z_2}{\sqrt{2}},...\right\},\\
\{f_i\}_{i \in \mathbb{N}}&=\{z_1,z_1,z_2,z_2,...\},\\
\{w_i\}_{i \in \mathbb{N}}&=\{z_1,z_3,z_5,z_7,...\}.
\end{align*}
Clearly $\{u_i\}_{i \in \mathbb{N}}$ is a Parseval frame for $H$, $\{w_i\}_{i \in \mathbb{N}}$ is a frame sequence in $H$ and its canonical dual is itself. Using the definition of $y_i$, we get
\begin{align*}
y_{2i}&=\sum_{k \in \mathbb{N}}\big\langle u_{2k-1},f_{2i} \big\rangle w_{2k-1}+\sum_{k \in \mathbb{N}}\big\langle u_{2k},f_{2i} \big\rangle w_{2k}\\
&=\sum_{k \in \mathbb{N}}\left\langle \frac{z_k}{\sqrt{2}},z_i \right\rangle z_{4k-3}+\sum_{k \in \mathbb{N}}\left\langle \frac{z_k}{\sqrt{2}},z_i \right\rangle z_{4k-1}\\
&=\frac{z_{4i-3}}{\sqrt{2}}+\frac{z_{4i-1}}{\sqrt{2}},\ i \in \mathbb{N}.
\end{align*}
Similarly, $y_{2i-1}=\frac{z_{4i-3}}{\sqrt{2}}+\frac{z_{4i-1}}{\sqrt{2}},\ i \in \mathbb{N}.$ So,
\begin{equation*}
\{y_i\}_{i \in \mathbb{N}}=\left\{\frac{z_1+z_3}{\sqrt{2}},\frac{z_1+z_3}{\sqrt{2}},\frac{z_5+z_7}{\sqrt{2}},\frac{z_5+z_7}{\sqrt{2}},...\right\}.
\end{equation*}
Clearly $z_1 \notin span\{y_i\}$. Also $z_1 \notin \overline{span}\{y_i\}$ as $(z_1-z_3)\perp y_i,\ \forall\ i \in \mathbb{N}$ but $(z_1-z_3)\not\perp z_1$. This shows that $\overline{span}\{y_i:i \in \mathbb{N}\}\subsetneq \overline{span}\{w_i:i \in \mathbb{N}\}$.
\end{ex}

The following theorem provides a condition which guarantees the completeness of $\{y_i\}_{i \in I}$ in $\overline{span}\{w_j\}$.
\begin{thm}\label{lem2} Let $w=\{w_i\}_{i \in I}$ be a frame sequence in $H$ and $f=\{f_i\}_{i \in I}$ be a frame for $H$. Let $u=\{u_i\}_{i \in I}$ be a Parseval frame for $H$ such that
\begin{equation}\label{cond2}
(G(u,u)^t-\mathcal{I})w=\bold{0}
\end{equation}
and $\{y_i\}_{i \in I}$ be as defined in (\ref{def of ni}). Then, $\overline{span}\{y_i:i \in I\}=\overline{span}\{w_i:i \in I\}$. In fact, $\{y_i\}_{i \in I}$ is a frame for $\overline{span}\{w_i\}$. If $A_w, B_w$ are frame bounds of $w$ and $A_f, B_f$ are those of $f$, then $\frac{A_f}{B_w},\frac{B_f}{A_w}$ are frame bounds of $\{y_i\}_{i \in I}$.
\end{thm}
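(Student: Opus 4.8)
The plan is to verify the frame inequality directly for every $g\in\overline{span}\{w_i\}$. First I would fix such a $g$ and rewrite, using (\ref{def of ni}) and continuity of the inner product,
\begin{equation*}
\langle g,y_i\rangle=\sum_{k\in I}\langle f_i,u_k\rangle\langle g,\widetilde{w}_k\rangle=\Big\langle f_i,\sum_{k\in I}\langle \widetilde{w}_k,g\rangle u_k\Big\rangle=\langle f_i,\phi\rangle,
\end{equation*}
where $\phi:=\sum_{k\in I}\langle \widetilde{w}_k,g\rangle u_k$ is well defined because $\{\langle\widetilde{w}_k,g\rangle\}_{k\in I}\in\ell^2(I)$ (as $\widetilde{w}$ is Bessel) and $u$ is a Bessel sequence. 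Consequently $\sum_{i\in I}|\langle g,y_i\rangle|^2=\sum_{i\in I}|\langle f_i,\phi\rangle|^2$, and since $f$ is a frame with bounds $A_f,B_f$, this quantity lies between $A_f\|\phi\|^2$ and $B_f\|\phi\|^2$. The whole problem therefore reduces to pinning down $\|\phi\|^2$ in terms of $\|g\|^2$.

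The key step -- and the part I expect to be the main obstacle -- is to show that $\|\phi\|^2=\sum_{k\in I}|\langle\widetilde{w}_k,g\rangle|^2$, i.e. that the synthesis map $c\mapsto T_uc=\sum_{k}c_ku_k$ acts isometrically on the coefficient sequence $c=\{\langle\widetilde{w}_k,g\rangle\}_{k\in I}$. Since $u$ is a Parseval frame, $T_uT_u^*=I$, so the Gram operator $T_u^*T_u$, whose matrix is $G(u,u)^t$, is the orthogonal projection $P$ of $\ell^2(I)$ onto $(\ker T_u)^\perp$; hence $\|\phi\|^2=\|T_uc\|^2=\langle Pc,c\rangle$, which equals $\|c\|^2$ exactly when $Pc=c$. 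To establish $Pc=c$ I would invoke hypothesis (\ref{cond2}). Writing $\widetilde{w}_k=S_w^{-1}w_k$ and setting $g':=S_w^{-1}g\in\overline{span}\{w_i\}$, the coefficients become $c_k=\langle\widetilde{w}_k,g\rangle=\langle w_k,g'\rangle$; taking the inner product of the identity $\sum_{l\in I}\langle u_l,u_k\rangle w_l=w_k$ (which is precisely what (\ref{cond2}) asserts) with $g'$ then yields $\sum_{l\in I}\langle u_l,u_k\rangle c_l=c_k$ for every $k$, that is, $G(u,u)^t c=c$, so $Pc=c$ as required.

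With this identity in hand the bounds follow at once. Because $\widetilde{w}$ is the canonical dual of the frame $w$ on $\overline{span}\{w_i\}$, it has frame bounds $\tfrac{1}{B_w},\tfrac{1}{A_w}$, so
\begin{equation*}
\frac{1}{B_w}\|g\|^2\le\sum_{k\in I}|\langle\widetilde{w}_k,g\rangle|^2=\|\phi\|^2\le\frac{1}{A_w}\|g\|^2.
\end{equation*}
Combining this with $A_f\|\phi\|^2\le\sum_{i\in I}|\langle g,y_i\rangle|^2\le B_f\|\phi\|^2$ gives
\begin{equation*}
\frac{A_f}{B_w}\|g\|^2\le\sum_{i\in I}|\langle g,y_i\rangle|^2\le\frac{B_f}{A_w}\|g\|^2,\qquad g\in\overline{span}\{w_i\},
\end{equation*}
which is the asserted frame inequality with bounds $\tfrac{A_f}{B_w},\tfrac{B_f}{A_w}$. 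Finally, since each $y_i\in\overline{span}\{w_i\}$ the inclusion $\overline{span}\{y_i\}\subseteq\overline{span}\{w_i\}$ is automatic, and the strictly positive lower bound $\tfrac{A_f}{B_w}>0$ forces any $g\in\overline{span}\{w_i\}$ orthogonal to all the $y_i$ to vanish; hence $\{y_i\}_{i\in I}$ is complete in $\overline{span}\{w_i\}$ and $\overline{span}\{y_i:i\in I\}=\overline{span}\{w_i:i\in I\}$.
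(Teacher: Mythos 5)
Your proof is correct, and its overall skeleton is the same as the paper's: both rewrite $\langle g,y_i\rangle$ as $\langle f_i,\phi\rangle$ with $\phi=T_uc$, $c=\{\langle\widetilde{w}_k,g\rangle\}_{k\in I}$, reduce the whole theorem to showing $\|T_uc\|=\|c\|$, and conclude with the canonical dual bounds $1/B_w$, $1/A_w$. The genuine difference is in how that key fact is established. The paper proves $c\in \ker(T_u)^{\perp}$ by testing against an arbitrary $\{c_k\}\in\ker(T_u)$, which forces it to first transfer (\ref{cond2}) to the canonical dual (obtaining $\widetilde{w}_k=\sum_{m}\langle u_m,u_k\rangle\widetilde{w}_m$ via $S_w^{-1}$) and then to interchange a double series, justified by Swartz's iterated-series theorem \cite{swartz1992iterated}; it also uses, only implicitly, that the Parseval synthesis operator is isometric on $\ker(T_u)^{\perp}$. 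You instead identify $G(u,u)^t$ with the Gram operator $P=T_u^{*}T_u$, note that $T_uT_u^{*}=I$ makes $P$ the orthogonal projection onto $\ker(T_u)^{\perp}$, and verify $Pc=c$ by pairing the identity $\sum_{l}\langle u_l,u_k\rangle w_l=w_k$ from (\ref{cond2}) directly against $g'=S_w^{-1}g$; since the series converges in norm ($w$ is Bessel and $\{\langle u_l,u_k\rangle\}_l\in\ell^2(I)$), this needs only continuity of the inner product and no iterated-series lemma. Your route is slightly cleaner and makes explicit two points the paper glosses over: the isometry of $T_u$ on $\ker(T_u)^{\perp}$, and the deduction of completeness of $\{y_i\}_{i\in I}$ in $\overline{span}\{w_i\}$ from the strictly positive lower bound (the paper proves only the two frame inequalities). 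The paper's kernel computation yields the side fact $\sum_k\overline{c_k}\widetilde{w}_k=0$ for all $\{c_k\}\in\ker(T_u)$, but for this theorem nothing is lost by your shortcut, and both arguments produce exactly the bounds $\frac{A_f}{B_w}$ and $\frac{B_f}{A_w}$.
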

\begin{proof} Let $g \in \overline{span}\{w_i\}.$
Consider
\begin{align}
\sum_{i \in I}\left|\langle g,y_i \rangle\right|^2&=\sum_{i \in I}\left|\left\langle g,\sum_{k \in I}\big\langle u_k,f_i \big\rangle \widetilde{w}_k \right\rangle\right|^2 \nonumber\\
&=\sum_{i \in I}\left|\sum_{k \in I}\langle f_i,u_k \rangle \langle g,\widetilde{w}_k \rangle\right|^2\nonumber\\
&=\sum_{i \in I}\left|\left\langle f_i,\sum_{k \in I}\big\langle \widetilde{w}_k,g \big\rangle u_k \right\rangle\right|^2\nonumber\\
&\geq A_f \left\|\sum_{k \in I}\big\langle \widetilde{w}_k,g \big\rangle u_k\right\|^2, \label{eq5}
\end{align}
where $\{\widetilde{w}_k\}_{k \in I}$ is the canonical dual of $w$. Now, we shall show that $\big\{\big \langle \widetilde{w}_k,g \big \rangle\big\}_{k \in I} \in Ker(T_u)^\perp$. Let $\{c_k\}_{k \in I} \in Ker(T_u)$. From (\ref{cond2}) we have, $\widetilde{w}_k=\sum\limits_{m \in I}\big \langle u_m,u_k \big \rangle \widetilde{w}_m$, which gives
\begin{equation*}
\sum\limits_{k \in I}\overline{c_k}\widetilde{w}_k=\sum\limits_{k \in I}\overline{c_k}\sum\limits_{m \in I}\big \langle u_m,u_k \big \rangle \widetilde{w}_m=\sum\limits_{m \in I}\sum\limits_{k \in I}\big \langle u_m,c_ku_k \big \rangle \widetilde{w}_m.
\end{equation*} Here also the change in the order of summation is possible, by applying Theorem 1 of \cite{swartz1992iterated}, as done in the proof of Theorem \ref{thm4}. Thus, $\sum\limits_{k \in I}\overline{c_k}\widetilde{w}_k=\sum\limits_{m \in I}\big \langle u_m,\sum\limits_{k \in I}c_ku_k \big \rangle \widetilde{w}_m=0$, as $\{c_k\}_{k \in I} \in Ker(T_u)$. This in turn implies that $\sum\limits_{k \in I}\big\langle \widetilde{w}_k,g \big\rangle \overline{c_k}=\left\langle \sum\limits_{k \in I}\overline{c_k}\widetilde{w}_k,g \right\rangle=0$, thereby proving that $\big\{\big \langle \widetilde{w}_k,g \big \rangle\big\}_{k \in I} \in Ker(T_u)^\perp$. From (\ref{eq5}), it then follows that
\begin{equation*}
\sum_{i \in I}\big|\langle g,y_i \rangle\big|^2\geq A_f \sum_{k \in I}\big|\big\langle \widetilde{w}_k,g \big\rangle\big|^2\geq \frac{A_f}{B_w} \big\|g\big\|^2.
\end{equation*}
The upper frame inequality with bound $\frac{B_f}{A_w}$ can be proved similarly.
\qed\end{proof}

The converse of the above theorem is not true. More explicitly, if $\{y_i\}_{i \in I}$ is a frame for $\overline{span}\{w_i\}$, then (\ref{cond2}) need not be satisfied, as can be seen from the example below.
\begin{ex}
Let $\{z_i\}_{i \in \mathbb{N}}$ be an orthonormal basis for $H$,
\begin{align*}
\{f_i\}_{i \in \mathbb{N}}&=\{z_1,z_2,z_3,z_4,...\},\\
\{u_i\}_{i \in \mathbb{N}}&=\left\{\frac{z_1}{\sqrt{2}},\frac{z_1}{\sqrt{2}},\frac{z_2}{\sqrt{2}},\frac{z_2}{\sqrt{2}},...\right\}\\
and \hspace{0.5em} \{w_i\}_{i \in \mathbb{N}}&=\left\{\frac{z_2}{\sqrt{2}},\frac{z_3}{\sqrt{2}},\frac{z_2}{\sqrt{2}},\frac{-z_3}{\sqrt{2}},\frac{z_4}{\sqrt{2}},\frac{z_5}{\sqrt{2}},\frac{z_4}{\sqrt{2}},\frac{-z_5}{\sqrt{2}},...\right\}.
\end{align*}
Clearly, $\{u_i\}_{i \in \mathbb{N}}$ is a Parseval frame for $H$, $\{w_i\}_{i \in \mathbb{N}}$ is a frame sequence in $H$ and its canonical dual is itself. For the ease of computation, we shall rewrite the sequences $\{u_i\}_{i \in \mathbb{N}}$ and $\{w_i\}_{i \in \mathbb{N}}$ as
\begin{equation*}
u_i=\begin{cases}
\frac{z_{2k-1}}{\sqrt{2}}, & i=4k-3,\ k \in \mathbb{N} \\
\frac{z_{2k-1}}{\sqrt{2}}, & i=4k-2,\ k \in \mathbb{N}\\
\frac{z_{2k}}{\sqrt{2}}, & i=4k-1,\ k \in \mathbb{N} \\
\frac{z_{2k}}{\sqrt{2}}, & i=4k,\ k \in \mathbb{N}
\end{cases} \hspace{1em} and \hspace{1em} w_i=
\begin{cases}
\frac{z_{2k}}{\sqrt{2}}, & i=4k-3,\ k \in \mathbb{N} \\
\frac{z_{2k+1}}{\sqrt{2}}, & i=4k-2,\ k \in \mathbb{N} \\
\frac{z_{2k}}{\sqrt{2}}, & i=4k-1,\ k \in \mathbb{N} \\
\frac{-z_{2k+1}}{\sqrt{2}}, & i=4k,\ k \in \mathbb{N}.
\end{cases}
\end{equation*}
Using the definition of $y_i$, we get
\begin{align*}
y_{2i}&=\sum_{k \in \mathbb{N}}\big\langle u_{4k-3},f_{2i} \big\rangle w_{4k-3}+\sum_{k \in \mathbb{N}}\big\langle u_{4k-2},f_{2i} \big\rangle w_{4k-2}+\sum_{k \in \mathbb{N}}\big\langle u_{4k-1},f_{2i} \big\rangle w_{4k-1}+\sum_{k \in \mathbb{N}}\big\langle u_{4k},f_{2i} \big\rangle w_{4k}\\
&=\sum_{k \in \mathbb{N}}\left\langle \frac{z_{2k-1}}{\sqrt{2}},z_{2i} \right\rangle \frac{z_{2k}}{\sqrt{2}}+\sum_{k \in \mathbb{N}}\left\langle \frac{z_{2k-1}}{\sqrt{2}},z_{2i} \right\rangle \frac{z_{2k+1}}{\sqrt{2}}+\sum_{k \in \mathbb{N}}\left\langle \frac{z_{2k}}{\sqrt{2}},z_{2i} \right\rangle \frac{z_{2k}}{\sqrt{2}}\\ &\qquad+\sum_{k \in \mathbb{N}}\left\langle \frac{z_{2k}}{\sqrt{2}},z_{2i} \right\rangle \frac{-z_{2k+1}}{\sqrt{2}}\\
&=\frac{z_{2i}}{2}-\frac{z_{2i+1}}{2},\ i \in \mathbb{N}.
\end{align*}
Similarly, $y_{2i-1}=\frac{z_{2i}}{2}+\frac{z_{2i+1}}{2},\ i \in \mathbb{N}.$ So,
\begin{equation*}
\{y_i\}_{i \in \mathbb{N}}=\left\{\frac{z_2+z_3}{2},\frac{z_2-z_3}{2},\frac{z_4+z_5}{2},\frac{z_4-z_5}{2},...\right\},
\end{equation*} which is clearly a frame for $\overline{span}\{w_i\}$ with both the frame bounds being $\frac{1}{2}$.
\par Now, consider
\begin{align*}
\sum_{k \in \mathbb{N}}\langle u_k,u_1 \rangle w_k&=\sum_{k \in \mathbb{N}}\langle u_{4k-3},u_1 \rangle w_{4k-3}+\sum_{k \in \mathbb{N}}\langle u_{4k-2},u_1 \rangle w_{4k-2}+\sum_{k \in \mathbb{N}}\langle u_{4k-1},u_1 \rangle w_{4k-1}\\ &\qquad+\sum_{k \in \mathbb{N}}\langle u_{4k},u_1 \rangle w_{4k}\\
&=\sum_{k \in \mathbb{N}}\left\langle \frac{z_{2k-1}}{\sqrt{2}},\frac{z_1}{\sqrt{2}} \right\rangle \frac{z_{2k}}{\sqrt{2}}+\sum_{k \in \mathbb{N}}\left\langle \frac{z_{2k-1}}{\sqrt{2}},\frac{z_1}{\sqrt{2}} \right\rangle \frac{z_{2k+1}}{\sqrt{2}}+\sum_{k \in \mathbb{N}}\left\langle \frac{z_{2k}}{\sqrt{2}},\frac{z_1}{\sqrt{2}} \right\rangle \frac{z_{2k}}{\sqrt{2}}\\ &\qquad+\sum_{k \in \mathbb{N}}\left\langle \frac{z_{2k}}{\sqrt{2}},\frac{z_1}{\sqrt{2}} \right\rangle \frac{-z_{2k+1}}{\sqrt{2}}\\
&=\frac{z_2}{2\sqrt{2}}+\frac{z_3}{2\sqrt{2}}\\
&\neq w_1,
\end{align*}
which implies that $(G(u,u)^t-\mathcal{I})w\neq \bold{0}$.
\end{ex}

However, the converse of Theorem \ref{lem2} is true when an additional condition is imposed, which is proved as follows.
\begin{thm}
Let $w=\{w_j\}_{j \in I}$ be a frame sequence in $H$, $\widetilde{w}=\{\widetilde{w}_j\}_{j \in I}$ be its canonical dual, $u=\{u_i\}_{i \in I}$ be a Parseval frame for $H$ and $f=\{f_i\}_{i \in I}$ be any sequence in $H$ such that the sequence $\{y_i\}_{i \in I}$, given by (\ref{def of ni}), is complete in $\overline{span}\{w_j\}_{j \in I}$. In addition, if $(G(\widetilde{w},w)^t-\mathcal{I})G(u,f)=\bold{0}$ holds, then $(G(u,u)^t-\mathcal{I})w=\bold{0}$.
\end{thm}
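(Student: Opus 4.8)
The plan is to reduce the matrix identity $(G(u,u)^t-\mathcal{I})w=\bold{0}$ to the pointwise reconstruction statement
\[
\sum_{k \in I}\langle u_k,u_i\rangle w_k = w_i,\quad \forall\ i \in I,
\]
and to verify this by testing the relevant difference against the complete system $\{y_i\}_{i \in I}$. First I would extract the basic relation $\langle w_k,y_l\rangle=\langle f_l,u_k\rangle$ for all $k,l \in I$. Since the hypothesis $(G(\widetilde{w},w)^t-\mathcal{I})G(u,f)=\bold{0}$ is precisely condition (\ref{cond1}) and $y$ is given by (\ref{def of ni}), this relation is obtained exactly as in the derivation of (\ref{eq6}) in the proof of Theorem \ref{thm1}, by reading off the $(i,j)$ entry of the matrix product.

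Next I would settle convergence and set up the completeness argument. As $u$ is a Parseval frame, $\{\langle u_k,u_i\rangle\}_{k \in I}\in \ell^2(I)$, and as $w$ is a frame sequence (hence Bessel), the series $\sum_{k \in I}\langle u_k,u_i\rangle w_k$ converges in $H$ to an element of $\overline{span}\{w_j\}_{j \in I}$. Therefore the difference $d_i:=\sum_{k \in I}\langle u_k,u_i\rangle w_k-w_i$ belongs to $\overline{span}\{w_j\}_{j \in I}$. Since $\{y_i\}_{i \in I}$ is complete in $\overline{span}\{w_j\}_{j \in I}$, in order to conclude $d_i=0$ it suffices to show that $d_i$ is orthogonal to every $y_l$; indeed, this makes $d_i$ orthogonal to $\overline{span}\{y_i\}=\overline{span}\{w_j\}$, forcing $d_i=0$.

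Finally I would compute the test inner products. Using continuity of the inner product together with the relation from the first step,
\begin{align*}
\Big\langle \sum_{k \in I}\langle u_k,u_i\rangle w_k,\,y_l\Big\rangle
&=\sum_{k \in I}\langle u_k,u_i\rangle\langle w_k,y_l\rangle
=\sum_{k \in I}\langle f_l,u_k\rangle\langle u_k,u_i\rangle\\
&=\Big\langle \sum_{k \in I}\langle f_l,u_k\rangle u_k,\,u_i\Big\rangle
=\langle f_l,u_i\rangle,
\end{align*}
where the last equality is the Parseval reconstruction $\sum_{k \in I}\langle f_l,u_k\rangle u_k=f_l$ applied to $f_l \in H$. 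On the other hand, $\langle w_i,y_l\rangle=\langle f_l,u_i\rangle$ by the relation of the first step with $k=i$, so $\langle d_i,y_l\rangle=0$ for all $l$, whence $d_i=0$ and the pointwise identity holds. I expect the only delicate point to be the interplay between convergence and completeness: one must first verify that the series defining $d_i$ genuinely converges in $H$, so that $d_i$ lies in $\overline{span}\{w_j\}$ and the inner product may be passed through the summation --- only then does completeness of $\{y_i\}$ yield $d_i=0$. Granting this, the identity holds for every $i \in I$, which is exactly $(G(u,u)^t-\mathcal{I})w=\bold{0}$.
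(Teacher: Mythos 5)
Your proposal is correct and follows essentially the same route as the paper's proof: extract $\langle w_k,y_l\rangle=\langle f_l,u_k\rangle$ from the hypothesis, test $\sum_{k}\langle u_k,u_i\rangle w_k-w_i$ against each $y_l$ using Parseval reconstruction of $f_l$, and invoke completeness of $\{y_i\}$ in $\overline{span}\{w_j\}_{j\in I}$ to conclude the difference vanishes. Your explicit verification that the series converges in $H$ and that the difference lies in $\overline{span}\{w_j\}_{j\in I}$ is a small point the paper leaves implicit, but it changes nothing in substance.
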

\begin{proof}
If $(G(\widetilde{w},w)^t-\mathcal{I})G(u,f)=\bold{0}$, then $\langle w_j,y_i\rangle=\left\langle w_j,\sum\limits_{k \in I}\big\langle u_k,f_i \big\rangle \widetilde{w}_k \right\rangle=\overline{\sum\limits_{k \in I}\big\langle u_k,f_i \big\rangle \langle\widetilde{w}_k,w_j \rangle}\\ =\langle f_i,u_j\rangle$. So, for $k \in I$,
\begin{align*}
\left \langle\sum\limits_{j \in I}\langle u_j,u_k \rangle w_j,y_i \right\rangle&=\sum\limits_{j \in I}\langle u_j,u_k \rangle \langle w_j,y_i \rangle=\sum\limits_{j \in I}\langle u_j,u_k \rangle \langle f_i,u_j \rangle\\
&=\left\langle \sum\limits_{j \in I}\langle f_i,u_j \rangle u_j,u_k \right\rangle=\langle f_i,u_k \rangle\\
&=\langle w_k,y_i \rangle,
\end{align*}
which in turn gives $\left \langle \sum\limits_{j \in I}\langle u_j,u_k \rangle w_j-w_k,g \right\rangle=0,\ \forall\ g \in \overline{span}\{y_i\}$. As $\{y_i\}_{i \in I}$ is complete in $\overline{span}\{w_i\}$, we get $\sum\limits_{j \in I}\langle u_j,u_k \rangle w_j-w_k=0,\ \forall\ k \in I$, which is equivalent to $(G(u,u)^t-\mathcal{I})w=\bold{0}$.
\qed\end{proof}

Furthermore, it is true that (\ref{cond2}) is a necessary condition that holds when $w$ is a weak R-dual of $f$. Nevertheless, we shall state and prove a more general result as follows.
\begin{thm}
Let $u=\{u_i\}_{i \in I}$ be a Parseval frame, $v=\{v_i\}_{i \in I}, f=\{f_i\}_{i \in I}$ be Bessel sequences and $w=\{w_i\}_{i \in I}$ be any sequence in $H$ such that $w_j=\sum\limits_{i \in I}\langle f_i,u_j \rangle v_i, j \in I$. Then, $(G(u,u)^t-\mathcal{I})w=\bold{0}$.
\end{thm}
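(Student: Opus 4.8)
The plan is to unwind the matrix identity $(G(u,u)^t-\mathcal{I})w=\bold{0}$ into its componentwise form. With the indexing conventions used throughout the paper, the $k$-th entry of $(G(u,u)^t-\mathcal{I})w$ is $\sum_{j \in I}\langle u_j,u_k\rangle w_j - w_k$, so the statement is equivalent to proving that $\sum_{j \in I}\langle u_j,u_k\rangle w_j = w_k$ for every $k \in I$.

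The main idea is to realize the whole sequence $w$ as the image of $u$ under a single bounded operator. I would define $M:H\longrightarrow H$ by $Mx=\sum_{i \in I}\langle f_i,x\rangle v_i$. Since $f$ is a Bessel sequence, $\{\langle f_i,x\rangle\}_{i \in I}\in \ell^2(I)$ for each $x$, and since $v$ is Bessel the series converges in $H$ with $\|Mx\|\leq \sqrt{B_fB_v}\,\|x\|$; thus $M$ is a well-defined bounded operator, which is conjugate-linear because $x\mapsto\langle f_i,x\rangle$ is. By the hypothesis $w_j=\sum_{i}\langle f_i,u_j\rangle v_i$, we have exactly $w_j=Mu_j$ for all $j \in I$.

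With this in hand the computation becomes transparent. Since $u$ is a Parseval frame, the reconstruction formula gives $u_k=\sum_{j \in I}\langle u_k,u_j\rangle u_j$, a series converging in $H$. Applying $M$ and using its continuity and conjugate-linearity to pass it through the (convergent) sum, I obtain $w_k=Mu_k=\sum_{j \in I}\overline{\langle u_k,u_j\rangle}\,Mu_j=\sum_{j \in I}\langle u_j,u_k\rangle w_j$, which is precisely the desired identity.

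The only delicate point is the passage of $M$ through the infinite series, i.e.\ the interchange of the operator with the summation; this is exactly where boundedness of $M$ is used, as continuity allows $M$ to be moved inside the limit of partial sums. Alternatively, one may first verify directly that $w$ is itself a Bessel sequence (with bound $B_fB_v$), so that $\sum_{j \in I}\langle u_j,u_k\rangle w_j$ converges unconditionally, and then justify the rearrangement by Theorem 1 of \cite{swartz1992iterated} as done elsewhere in the paper; the conjugate-linear operator route, however, avoids any appeal to iterated-series theorems and is the cleaner of the two.
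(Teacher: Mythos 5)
Your proof is correct, and it takes a genuinely different route from the one in the paper. The paper argues at the level of scalar double series: using the Parseval property of $u$ it expands $\langle f_i,u_j\rangle=\left\langle \sum_{k\in I}\langle f_i,u_k\rangle u_k,u_j\right\rangle$, obtaining $w_j=\sum_{i\in I}\sum_{k\in I}\langle f_i,u_k\rangle\langle u_k,u_j\rangle v_i$, and then interchanges the two summations by appealing to Theorem 1 of \cite{swartz1992iterated}, which requires verifying an $\ell^2$ estimate along every increasing subsequence of indices via the Bessel bound $B_f$; after the swap the inner series reassembles into $w_k$, giving $w_j=\sum_{k\in I}\langle u_k,u_j\rangle w_k$, i.e.\ $(G(u,u)^t-\mathcal{I})w=\bold{0}$. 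You instead package analysis against $f$ followed by synthesis with $v$ into a single bounded conjugate-linear operator $M$ with $w_j=Mu_j$ (the bound $\|Mx\|\le\sqrt{B_fB_v}\,\|x\|$ is right), apply $M$ to the Parseval reconstruction $u_k=\sum_{j\in I}\langle u_k,u_j\rangle u_j$, and let continuity carry $M$ inside the series; the conjugate-linearity is precisely what flips $\langle u_k,u_j\rangle$ into $\langle u_j,u_k\rangle$, so your bookkeeping correctly produces the transpose $G(u,u)^t$ rather than $G(u,u)$. Your route buys a cleaner justification --- continuity of one bounded operator replaces the iterated-series theorem and its subsequence-by-subsequence check --- while the paper's route has the virtue of uniformity, since the same Swartz-based interchange is the workhorse in several other proofs of the paper. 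Your side claim that $w$ is Bessel with bound $B_fB_v$ is also correct: since $\langle w_j,x\rangle=\left\langle \sum_{i\in I}\langle v_i,x\rangle f_i,\,u_j\right\rangle$ and $u$ is Parseval, one gets $\sum_{j\in I}|\langle x,w_j\rangle|^2=\left\|\sum_{i\in I}\langle v_i,x\rangle f_i\right\|^2\le B_fB_v\|x\|^2$, which in turn shows that the series $\sum_{j\in I}\langle u_j,u_k\rangle w_j$ in the conclusion converges unconditionally.
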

\begin{proof}
For $j \in I$,
\begin{equation}\label{eq7}
w_j=\sum_{i \in I}\big\langle f_i,u_j \big\rangle v_i=\sum\limits_{i \in I}\left\langle\sum\limits_{k \in I}\big \langle f_i,u_k \big \rangle u_k,u_j \right\rangle v_i=\sum\limits_{i \in I}\sum\limits_{k \in I}\langle f_i,u_k \rangle \langle u_k,u_j\rangle v_i.
\end{equation}
In order to change the order of summation, we consider an increasing sequence $\{k_l\}_{l \in I}$ of positive integers and we have,
\begin{align*}
\left(\sum_{i \in I}\left|\sum\limits_{l \in I}\langle f_i,u_{k_l} \rangle \langle u_{k_l},u_j\rangle\right|^2 \right)^{\frac{1}{2}}&=\left(\sum_{i \in I}\left|\left\langle f_i,\sum_{l \in I}\langle u_j,u_{k_l} \rangle u_{k_l} \right\rangle\right|^2 \right)^{\frac{1}{2}}\\
&\leq \sqrt{B_f}\left\|\sum_{l \in I}\langle u_j,u_{k_l} \rangle u_{k_l}\right\|<\infty,
\end{align*}
as $\{\langle u_j,u_{k_l}\rangle\}_{l \in I} \in \ell^2(I)$. Here, $B_f$ denotes the Bessel bound of $f$. By applying Theorem 1 of \cite{swartz1992iterated} to the series in (\ref{eq7}), we obtain
\begin{equation*}
w_j=\sum\limits_{k \in I}\sum\limits_{i \in I}\langle f_i,u_k \rangle \langle u_k,u_j\rangle v_i=\sum\limits_{k \in I}\langle u_k,u_j \rangle\left(\sum\limits_{i \in I}\langle f_i,u_k \rangle v_i\right)=\sum\limits_{k \in I}\langle u_k,u_j \rangle w_k,
\end{equation*} which implies that $(G(u,u)^t-\mathcal{I})w=\bold{0}$.
\qed\end{proof}

\section{Weak R-duality of Gabor frames}
In this section, we show that in order to have the adjoint Gabor system as an R-dual of a given Gabor frame, it is enough to check if it is a weak R-dual of the same. Towards this end, we first discern when a weak R-dual gives rise to an R-dual.
\begin{thm}\label{thm8}
Let $w=\{w_j\}_{j \in I}$ be a Riesz sequence and $f=\{f_i\}_{i \in I}$ be a frame for $H$. Suppose $w$ is a weak R-dual of $f$ with respect to Parseval frames $u=\{u_i\}_{i \in I}$ and $v=\{v_i\}_{i \in I}$. If $dim(Ker(T_y))=dim((span\{w_j\})^\perp)$, where $y$ is as defined in (\ref{def of ni}), then $u$ is an orthonormal basis and there exists another orthonormal basis $v^\prime$ such that $w$ is an R-dual of $f$ with respect to $u$ and $v^\prime$.
\end{thm}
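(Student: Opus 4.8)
The plan is to split the argument into two independent parts: first showing that the Parseval frame $u$ is forced to be an orthonormal basis, and then producing the orthonormal basis $v'$ that witnesses the R-duality. Since $w$ is a weak R-dual of $f$ with respect to $u$ and $v$, Theorem \ref{thm1} applies and supplies both the reconstruction formula $w_j=\sum_{i\in I}\langle f_i,u_j\rangle v_i$ and the identity (\ref{cond1}); moreover $v_i=y_i+x_i$ with $y_i\in\overline{span}\{w_j\}$ and $x_i\in(span\{w_j\})^\perp$, so that $y$ is exactly the orthogonal projection of the Parseval frame $v$ onto $\overline{span}\{w_j\}$ and is therefore itself a Parseval frame for $\overline{span}\{w_j\}$.

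For the first part I would exploit that a Riesz sequence has injective synthesis operator, i.e. $Ker(T_w)=\{0\}$. Take $c=\{c_j\}_{j\in I}\in Ker(T_u)$, so that $\sum_{j\in I}c_ju_j=0$, and aim to deduce $\sum_{j\in I}\overline{c_j}w_j=0$ (this sum converges, $w$ being Bessel). Fixing $h\in H$ and setting $g:=\sum_{i\in I}\langle v_i,h\rangle f_i$, which converges since $v$ and $f$ are Bessel, a short computation from $w_j=\sum_{i\in I}\langle f_i,u_j\rangle v_i$ shows $\langle w_j,h\rangle=\langle g,u_j\rangle$ for every $j$. Hence
\begin{equation*}
\Big\langle \sum_{j\in I}\overline{c_j}w_j,\,h\Big\rangle=\sum_{j\in I}\overline{c_j}\langle g,u_j\rangle=\Big\langle g,\sum_{j\in I}c_ju_j\Big\rangle=0,
\end{equation*}
the last two equalities being valid by continuity of the inner product and $\{c_j\}\in\ell^2(I)$. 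As $h$ is arbitrary, $\sum_{j\in I}\overline{c_j}w_j=0$, and injectivity of $T_w$ forces $c=0$. Thus $Ker(T_u)=\{0\}$; since $u$ is a Parseval frame, $T_u^*T_u$ is an orthogonal projection which is now injective and hence equals the identity, giving $\langle u_i,u_j\rangle=\delta_{ij}$, so that $u$ is an orthonormal basis.

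For the second part I would invoke Theorem \ref{thm2}, whose hypotheses are all in place: $w$, being a Riesz sequence, is a frame sequence, $f$ is a frame, $u$ is a Parseval frame, $\{y_i\}_{i\in I}$ is a Parseval frame for $\overline{span}\{w_j\}$ (noted above), (\ref{cond1}) holds, and $dim(Ker(T_y))=dim((span\{w_j\})^\perp)$ by assumption. Theorem \ref{thm2} then yields an orthonormal basis $v'=\{v'_i\}_{i\in I}$ for which $w$ is a weak R-dual of $f$ with respect to $u$ and $v'$, i.e. $w_j=\sum_{i\in I}\langle f_i,u_j\rangle v'_i$. Since $u$ and $v'$ are both orthonormal bases, $G(v',v')=\mathcal{I}$ makes the defining condition $(G(v',v')^t-\mathcal{I})G(f,u)=\bold{0}$ automatic, and the reconstruction formula is precisely that of an R-dual; therefore $w$ is an R-dual of $f$ with respect to $u$ and $v'$.

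The main obstacle is the first part, the rigidity statement that $u$ must be an orthonormal basis. The delicate point is the conjugation bookkeeping: the weak R-dual formula carries a conjugation, reading $w_j=\sum_{i}\overline{\langle u_j,f_i\rangle}v_i$, so one must pair $\overline{c_j}$ with $w_j$ while pairing $c_j$ with $u_j$ in order to transfer the relation $\sum_j c_ju_j=0$ to $\sum_j\overline{c_j}w_j=0$. Testing against an arbitrary $h\in H$ and rewriting $\langle w_j,h\rangle$ as $\langle g,u_j\rangle$ is what makes this transfer transparent and, at the same time, avoids any direct interchange of a double series.
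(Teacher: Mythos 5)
Your proposal is correct, and its overall architecture matches the paper's: for the second half you do exactly what the paper does, namely use Theorem \ref{thm1} to write $v_i=y_i+x_i$ and conclude that $y$, being the orthogonal projection of the Parseval frame $v$ onto $\overline{span}\{w_j\}$, is a Parseval frame for that subspace, and then feed the dimension hypothesis and (\ref{cond1}) into Theorem \ref{thm2} to produce the orthonormal basis $v^\prime$; the observation that a weak R-dual with respect to two orthonormal bases is an R-dual closes it, as in the paper. Where you genuinely diverge is the first half: the paper disposes of the claim that $u$ is an orthonormal basis with a single citation to \cite{li2020class}, whereas you prove it from scratch. Your rigidity argument is sound: from $w_j=\sum_{i\in I}\langle f_i,u_j\rangle v_i$ and $g=\sum_{i\in I}\langle v_i,h\rangle f_i$ you correctly get $\langle w_j,h\rangle=\langle g,u_j\rangle$, the conjugation bookkeeping transferring $\sum_j c_ju_j=0$ to $\sum_j\overline{c_j}w_j=0$ is handled properly, the lower Riesz bound of $w$ (extended from finite to $\ell^2$ sequences by continuity) kills $c$, and the step from $Ker(T_u)=\{0\}$ to orthonormality via the projection $T_u^*T_u$ (equal to the identity once injective, since $T_uT_u^*=I$ for a Parseval frame) is standard and correct. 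What your route buys is self-containedness — and it makes visible that only the Riesz-sequence property of $w$ is needed for the rigidity of $u$, not the dimension condition, which enters solely through Theorem \ref{thm2}; what the paper's route buys is brevity. Note also that your weak-form pairing against an arbitrary $h$ neatly sidesteps the interchange-of-summation issues that the paper elsewhere resolves by appealing to \cite{swartz1992iterated}.
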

\begin{proof}
Under the above hypothesis, it is proved in \cite{li2020class} that $u$ will turn out to be an orthonormal basis. Now, by Theorem \ref{thm1}, (\ref{cond1}) is true and we have $v_i=y_i+x_i$ with $x_i \in (span\{w_j\})^\perp$. For $g \in \overline{span}\{w_j\}$, we have $\sum\limits_{i \in I}\big|\langle g,y_i \rangle\big|^2=\sum\limits_{i \in I}\big|\langle g,v_i \rangle\big|^2=\big\|g\big\|^2$, thereby showing that $y$ is a Parseval frame for $\overline{span}\{w_j\}$. Therefore, by Theorem \ref{thm2}, we may conclude that there exists an orthonormal basis $v^\prime$ of $H$ such that $w$ is an R-dual of $f$ with respect to $u$ and $v^\prime$.
\qed\end{proof}

\begin{cor}\label{cor}
Let $f \in L^2(\mathbb{R})$ and the Gabor system $\{E_{mb}\mathcal{T}_{na}f\}_{m,n \in \mathbb{Z}}, a,b>0$ be a frame for $L^2(\mathbb{R})$. If the adjoint system $\left\{\frac{1}{\sqrt{ab}}E_{\frac{m}{a}}\mathcal{T}_{\frac{n}{b}}f\right\}_{m, n \in \mathbb{Z}}$ is a weak R-dual of $\{E_{mb}\mathcal{T}_{na}f\}_{m,n \in \mathbb{Z}}$, then the adjoint system is also an R-dual of the given Gabor frame.
\end{cor}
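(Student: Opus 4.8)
The plan is to read Corollary \ref{cor} off from Theorem \ref{thm8}, so the entire task is to verify that the hypotheses of that theorem hold in the Gabor setting, with $H=L^2(\mathbb{R})$, index set $\mathbb{Z}^2$, the frame being $f=\{E_{mb}\mathcal{T}_{na}f\}_{m,n\in\mathbb{Z}}$ and the candidate weak R-dual being the adjoint system $w=\left\{\frac{1}{\sqrt{ab}}E_{\frac{m}{a}}\mathcal{T}_{\frac{n}{b}}f\right\}_{m,n\in\mathbb{Z}}$. First I would invoke the Duality principle \cite{janssen1994duality,daubechies1994gabor,ron1997weyl} recalled in the Introduction: as $f$ is a frame, $w$ is a Riesz sequence with the same bounds. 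Since $w$ is assumed to be a weak R-dual of $f$ with respect to Parseval frames $u$ and $v$, every assumption of Theorem \ref{thm8} is in force except possibly the dimension condition $dim(Ker(T_y))=dim((span\{w_j\})^\perp)$, and verifying this is the heart of the matter.

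For the left-hand side I would use that the computation establishing $Ker(T_y)=Ker^*(T_f)$ in the proof of Theorem \ref{thm4} relies only on (\ref{cond1}) and on $u$ being Parseval, so it applies here irrespective of whether $v$ is an orthonormal basis. Thus $dim(Ker(T_y))=dim(Ker(T_f))$, the excess of the Gabor frame $f$, whereas $dim((span\{w_j\})^\perp)$ is the deficit of the adjoint Riesz sequence $w$. The problem is therefore reduced to showing that, for a Gabor frame and its adjoint, excess and deficit coincide.

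I would settle this by splitting on the lattice density $ab$, which obeys $ab\le 1$ because $f$ is a frame. If $ab=1$, then $f$ is a Riesz basis, so $Ker(T_f)=\{0\}$, and $w$ is a Riesz basis for all of $L^2(\mathbb{R})$, so it is complete and $(span\{w_j\})^\perp=\{0\}$; both dimensions vanish. If $ab<1$, I would show both dimensions are infinite. Here $Ker(T_f)\subset\ell^2(\mathbb{Z}^2)$ is invariant under the coordinate shift $\{c_{m,n}\}\mapsto\{c_{m-1,n}\}$ (which corresponds on the synthesis side to the modulation $E_b$), while $(span\{w_j\})^\perp$ is invariant under the modulation $E_{\frac{1}{a}}$, since $E_{\frac{1}{a}}w_{m,n}=w_{m+1,n}$. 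Neither the bilateral shift on $\ell^2(\mathbb{Z}^2)$ nor a modulation operator on $L^2(\mathbb{R})$ has any eigenvector, hence neither admits a nonzero finite-dimensional invariant subspace; combined with the density theorem for Gabor systems, which forces $Ker(T_f)\neq\{0\}$ (strict overcompleteness) and $(span\{w_j\})^\perp\neq\{0\}$ (incompleteness of the sparser adjoint) when $ab<1$, this makes both subspaces infinite-dimensional. In either case the dimensions agree, so Theorem \ref{thm8} applies and gives that $u$ is an orthonormal basis and that $w$ is an R-dual of $f$ with respect to $u$ and some orthonormal basis $v'$.

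The step I expect to be delicate is the case $ab<1$: one must upgrade strict overcompleteness and incompleteness from ``nonzero'' to ``infinite-dimensional'', and the cleanest justification I see is the invariance-plus-absence-of-eigenvectors argument above, resting on the density theorem. The case $ab=1$ and the concluding appeal to Theorem \ref{thm8} are then immediate.
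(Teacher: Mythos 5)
Your proposal is correct, and its skeleton coincides with the paper's proof: both invoke the Duality principle to conclude that the adjoint system $w$ is a Riesz sequence, reduce Corollary \ref{cor} to the dimension condition of Theorem \ref{thm8}, identify $dim(Ker(T_y))$ with $dim(Ker(T_f))$, and split on $ab=1$ versus $ab<1$. For the identification step, the paper argues directly from (\ref{eq4}): $\{c_{mn}\}\in Ker(T_y)$ if and only if $\{\overline{c_{mn}}\}\in Ker(T_f)$, using the Riesz-sequence property of $w$ together with the Parsevalness of $u$; your appeal to the $Ker(T_y)=Ker^*(T_f)$ computation from the proof of Theorem \ref{thm4} is an equivalent route, and you are right that that computation uses only (\ref{cond1}) (available by Theorem \ref{thm1}) and $u$ being Parseval, not the non-orthonormality of $v$. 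Where you genuinely diverge is the case $ab<1$: the paper cites the known fact that the adjoint Gabor system has infinite deficit and cites \cite{balan2003deficits} for the dichotomy $dim(Ker(T_f))\in\{0,\infty\}$, ruling out $0$ exactly as you do; you instead reprove both dichotomies in a self-contained way, noting that $Ker(T_f)$ is invariant under the bilateral shift $\{c_{m,n}\}\mapsto\{c_{m-1,n}\}$ (implemented by $E_b$ on the synthesis side), that $(span\{w_{mn}\})^{\perp}$ is invariant under the unitary $E_{\frac{1}{a}}$ (which permutes the $w_{mn}$ bijectively, so the closed span is reducing and the complement is genuinely invariant), and that neither unitary has an eigenvector, so neither subspace can be nonzero and finite dimensional. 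This buys self-containedness -- it is essentially the mechanism underlying the cited excess/deficit results -- at the price of having to supply the nontriviality inputs, and that is the one spot where your write-up should be made precise: your bare appeal to ``the density theorem'' for the incompleteness of the adjoint is, as stated, the strong completeness form of the density theorem (Rieffel/Ramanathan--Steger), which is more than the frame-bound version recalled in the paper. The cleanest fix avoids it entirely: since $w$ is already known to be a Riesz sequence, completeness would make it a Riesz basis, hence a Gabor frame with parameters $\left(\frac{1}{b},\frac{1}{a}\right)$, forcing $\frac{1}{a}\cdot\frac{1}{b}\leq 1$, i.e. $ab\geq 1$, contradicting $ab<1$. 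With that clarification (the nontriviality of $Ker(T_f)$ you handle exactly as the paper does), your argument is complete and Theorem \ref{thm8} applies as you say.
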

\begin{proof}
Suppose $\left\{\frac{1}{\sqrt{ab}}E_{\frac{m}{a}}\mathcal{T}_{\frac{n}{b}}f\right\}_{m, n \in \mathbb{Z}}$ is a weak R-dual of $\{E_{mb}\mathcal{T}_{na}f\}_{m,n \in \mathbb{Z}}$ with respect to two Parseval frames $\{u_{mn}\}_{m,n \in \mathbb{Z}}$ and $\{v_{mn}\}_{m,n \in \mathbb{Z}}$. By the Duality principle for Gabor systems, $\left\{\frac{1}{\sqrt{ab}}E_{\frac{m}{a}}\mathcal{T}_{\frac{n}{b}}f\right\}_{m, n \in \mathbb{Z}}$ is a Riesz sequence. Consider the sequence $y=\{y_{mn}\}_{m,n \in \mathbb{Z}}$ given by
\begin{equation*}
y_{mn}:=\sum\limits_{m^\prime,n^\prime \in \mathbb{Z}}\left\langle u_{m^\prime n^\prime},E_{mb}\mathcal{T}_{na}f \right\rangle S_w^{-1}\left(\frac{1}{\sqrt{ab}}E_{\frac{m^\prime}{a}}\mathcal{T}_{\frac{n^\prime}{b}}f\right),
\end{equation*}
where $S_w$ denotes the frame operator of $\left\{\frac{1}{\sqrt{ab}}E_{\frac{m}{a}}\mathcal{T}_{\frac{n}{b}}f\right\}_{m, n \in \mathbb{Z}}$. In order to prove the corollary, it suffices to show that $dim\left(\left(span\left\{\frac{1}{\sqrt{ab}}E_{\frac{m}{a}}\mathcal{T}_{\frac{n}{b}}f\right\}\right)^\perp\right)=dim(Ker(T_y))$, in view of Theorem \ref{thm8}. First, we observe that $dim(Ker(T_f))=dim(Ker(T_y))$, where $T_f$ denotes the synthesis operator of $\{E_{mb}\mathcal{T}_{na}f\}$. In fact, for $\{c_{mn}\}_{m,n \in \mathbb{Z}} \in \ell^2(\mathbb{Z}^2)$, by (\ref{eq4}), we have
\begin{equation*}
\sum\limits_{{m,n \in \mathbb{Z}}}c_{mn}y_{mn}=\sum\limits_{m^\prime,n^\prime \in \mathbb{Z}}\left\langle u_{m^\prime n^\prime},\sum\limits_{{m,n \in \mathbb{Z}}}\overline{c_{mn}}E_{mb}\mathcal{T}_{na}f \right\rangle S_w^{-1}\left(\frac{1}{\sqrt{ab}}E_{\frac{m^\prime}{a}}\mathcal{T}_{\frac{n^\prime}{b}}f\right).
\end{equation*}
This implies that $\{c_{mn}\}_{m,n \in \mathbb{Z}} \in Ker(T_y)$ if and only if $\{\overline{c_{mn}}\}_{m,n \in \mathbb{Z}} \in Ker(T_f)$, as $\left\{\frac{1}{\sqrt{ab}}E_{\frac{m}{a}}\mathcal{T}_{\frac{n}{b}}f\right\}$ is a Riesz sequence. Thus, we shall prove that $dim\left(\left(span\left\{\frac{1}{\sqrt{ab}}E_{\frac{m}{a}}\mathcal{T}_{\frac{n}{b}}f\right\}\right)^\perp\right)=dim(Ker(T_f))$. It is well known that if $\{E_{mb}\mathcal{T}_{na}f\}_{m,n \in \mathbb{Z}}$ is a frame for $L^2(\mathbb{R})$, then $ab\leq 1$. Further, the Gabor frame $\{E_{mb}\mathcal{T}_{na}f\}_{m,n \in \mathbb{Z}}$ is a Riesz basis if and only if $ab=1$.
\par Considering the case when $ab=1$, we infer that the Gabor frame $\{E_{mb}\mathcal{T}_{na}f\}_{m,n \in \mathbb{Z}}$ is a Riesz basis and hence $dim(Ker(T_f))=0$. Also, $dim\left(\left(span\left\{\frac{1}{\sqrt{ab}}E_{\frac{m}{a}}\mathcal{T}_{\frac{n}{b}}f\right\}\right)^\perp\right)=0$, as $\left\{E_{\frac{m}{a}}\mathcal{T}_{\frac{n}{b}}f\right\}$ is the same as $\{E_{mb}\mathcal{T}_{na}f\}$ in this case. Thus, $dim\left(\left(span\left\{\frac{1}{\sqrt{ab}}E_{\frac{m}{a}}\mathcal{T}_{\frac{n}{b}}f\right\}\right)^\perp\right)=dim(Ker(T_f))$, when $ab=1$. If $ab<1$, it is known that $\left\{\frac{1}{\sqrt{ab}}E_{\frac{m}{a}}\mathcal{T}_{\frac{n}{b}}f\right\}_{m,n \in \mathbb{Z}}$ has infinite deficit. In other words, $dim\left(\left(span\left\{\frac{1}{\sqrt{ab}}E_{\frac{m}{a}}\mathcal{T}_{\frac{n}{b}}f\right\}\right)^\perp\right)=\infty$. Further, by \cite{balan2003deficits}, $dim(Ker(T_f))=0$ or $\infty$. However, $dim(Ker(T_f))$ cannot be $0$, for otherwise, $\{E_{mb}\mathcal{T}_{na}f\}_{m,n \in \mathbb{Z}}$ will turn out to be a Riesz basis, which in turn implies that $ab=1$. Therefore, $dim(Ker(T_f))=\infty=dim\left(\left(span\left\{\frac{1}{\sqrt{ab}}E_{\frac{m}{a}}\mathcal{T}_{\frac{n}{b}}f\right\}\right)^\perp\right)$, thereby proving our assertion.
\qed\end{proof}

We shall now turn to the question as to when the adjoint Gabor system will be a weak R-dual of a Gabor frame. Suppose the given Gabor frame is a Riesz basis. Then, it is proved in \cite{casazza2004duality} that the adjoint Gabor system, which coincides with the given Gabor frame, is an R-dual. Now, if the adjoint Gabor system is a weak R-dual with respect to Parseval frames, say $u$ and $v$, then by Remark \ref{rem}, both $u$ and $v$ will be orthonormal bases indeed. Thus, we have the following.
\begin{thm}
For $f \in L^2(\mathbb{R})$, let $\{E_{mb}\mathcal{T}_{na}f\}_{m,n \in \mathbb{Z}}$ be a Riesz basis for $L^2(\mathbb{R})$. Then, the adjoint Gabor system $\left\{\frac{1}{\sqrt{ab}}E_{\frac{m}{a}}\mathcal{T}_{\frac{n}{b}}f\right\}_{m, n \in \mathbb{Z}}$ is a weak R-dual of $\{E_{mb}\mathcal{T}_{na}f\}_{m,n \in \mathbb{Z}}$ only in the sense of an R-dual.
\end{thm}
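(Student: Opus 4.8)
The plan is to reduce the entire statement to Remark \ref{rem}, which already encapsulates the rigidity of weak R-duals sitting over a Riesz basis. The starting observation is the classical fact (also used in the proof of Corollary \ref{cor}) that a Gabor frame $\{E_{mb}\mathcal{T}_{na}f\}_{m,n\in\mathbb{Z}}$ is a Riesz basis precisely when $ab=1$. First I would exploit this to note that when $ab=1$ we have $1/a=b$ and $1/b=a$, together with $1/\sqrt{ab}=1$, so the adjoint Gabor system $\{\frac{1}{\sqrt{ab}}E_{\frac{m}{a}}\mathcal{T}_{\frac{n}{b}}f\}_{m,n\in\mathbb{Z}}$ is literally the same sequence as the given Gabor frame. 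In particular, the adjoint system is itself a Riesz basis.

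Next I would invoke the hypothesis directly. Suppose the adjoint system $w$ is a weak R-dual of the Gabor frame $f$ with respect to Parseval frames $u$ and $v$. By the very definition of a weak R-dual, the compatibility condition $(G(v,v)^t-\mathcal{I})G(f,u)=\mathbf{0}$ is in force. Since $f$ is a Riesz basis and $u$ is a Parseval frame, the first assertion of Remark \ref{rem} forces $v$ to be an orthonormal basis. Because $w$ is, as noted above, a Riesz basis, the second assertion of Remark \ref{rem} --- which rests on the corresponding result in \cite{li2020class} --- forces $u$ to be an orthonormal basis as well.

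Finally, once both $u$ and $v$ are orthonormal bases, a weak R-dual is by definition nothing but an ordinary R-dual, exactly as recorded in the closing sentence of Remark \ref{rem}. Hence every realization of the adjoint Gabor system as a weak R-dual of the given Gabor Riesz basis is automatically a realization as an R-dual, which is precisely the assertion that it occurs ``only in the sense of an R-dual''.

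There is essentially no separate obstacle to surmount here: the substantive content is already carried by Remark \ref{rem}, namely the biorthogonality argument yielding $\langle v_j,v_i\rangle=\delta_{ij}$ and the input from \cite{li2020class} that forces $u$ to be orthonormal when the weak R-dual is itself a Riesz basis. The only Gabor-specific step is the identification, via $ab=1$, of the adjoint system with the original frame, so that $w$ inherits the Riesz basis property needed to trigger the second half of the remark; everything else is a direct quotation of the remark under correctly verified hypotheses.
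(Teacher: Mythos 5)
Your proposal is correct and takes essentially the same route as the paper: the paper's own argument also identifies the adjoint system with the given Gabor frame via $ab=1$ (so that it is a Riesz basis and, by \cite{casazza2004duality}, an R-dual) and then invokes Remark \ref{rem} to force $v$, and subsequently $u$ (using the result from \cite{li2020class} since $w$ is a Riesz basis), to be orthonormal bases, so that any weak R-dual realization is an R-dual. There are no gaps to report.
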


On the contrary, if $\{E_{mb}\mathcal{T}_{na}f\}$ is not a Riesz basis but a tight frame, then $\left\{\frac{1}{\sqrt{ab}}E_{\frac{m}{a}}\mathcal{T}_{\frac{n}{b}}f\right\}$, besides being an R-dual (proved in \cite{casazza2004duality}), will also be a weak R-dual in the true sense, which is the essence of the theorem below.
\begin{thm}
Let $f \in L^2(\mathbb{R})$ and the Gabor system $\{E_{mb}\mathcal{T}_{na}f\}_{m,n \in \mathbb{Z}},\ a,b>0$ and $ab<1$, be a tight frame for $L^2(\mathbb{R})$. Then, the adjoint system $\left\{\frac{1}{\sqrt{ab}}E_{\frac{m}{a}}\mathcal{T}_{\frac{n}{b}}f\right\}_{m, n \in \mathbb{Z}}$ is a weak R-dual of $\{E_{mb}\mathcal{T}_{na}f\}_{m,n \in \mathbb{Z}}$ with respect to an orthonormal basis and a Parseval frame, which is not an orthonormal basis, for $L^2(\mathbb{R})$.
\end{thm}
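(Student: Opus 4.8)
The plan is to realize the adjoint system $w=\left\{\frac{1}{\sqrt{ab}}E_{\frac{m}{a}}\mathcal{T}_{\frac{n}{b}}f\right\}_{m,n\in\mathbb{Z}}$ as a weak R-dual by combining the structural characterization of Theorem \ref{thm9} with the existence result in Theorem \ref{thm7}(i). First I would record the consequences of tightness. Writing $f=\{E_{mb}\mathcal{T}_{na}f\}$ for the given tight frame, its frame operator is $S_f=A\,I_{L^2(\mathbb{R})}$ for some $A>0$. By the Duality principle the adjoint system $w$ is a Riesz sequence with the same (equal) bounds, so $\left\{w_j/\sqrt{A}\right\}$ is an orthonormal system; hence $w$ is a frame sequence, its frame operator is $S_w=A\,I_{\overline{span}\{w_j\}}$, and its canonical dual is $\widetilde{w}_j=\frac{1}{A}w_j$. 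Since $ab<1$, the adjoint system has infinite deficit, so $dim\big((span\{w_j\})^{\perp}\big)=\infty$.

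Next I would produce the orthonormal basis $u$. Because $L^2(\mathbb{R})$ and its closed infinite-dimensional subspace $\overline{span}\{w_j\}$ are both separable and infinite dimensional, there is a conjugate-linear surjective isometry $J\colon L^2(\mathbb{R})\longrightarrow \overline{span}\{w_j\}$. Setting $\widetilde{L}=\frac{1}{\sqrt{A}}J$, a short computation with the conjugate-linear adjoint gives $\widetilde{L}\widetilde{L}^{*}=\frac{1}{A}I_{\overline{span}\{w_j\}}$ and $\widetilde{L}^{*}\widetilde{L}=\frac{1}{A}I_{L^2(\mathbb{R})}$, that is, $S_w=(\widetilde{L}\widetilde{L}^{*})^{-1}$ and $S_f=(\widetilde{L}^{*}\widetilde{L})^{-1}$. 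The converse direction of Theorem \ref{thm9} then furnishes a Parseval frame $u$ satisfying (\ref{cond1}) for which the sequence $y$ of (\ref{def of ni}) is a Parseval frame for $\overline{span}\{w_j\}$; explicitly $u_k=\widetilde{L}^{-1}\widetilde{w}_k=\frac{1}{\sqrt{A}}J^{-1}w_k$. Since $\left\{w_k/\sqrt{A}\right\}$ is an orthonormal basis of $\overline{span}\{w_j\}$ and $J^{-1}$ is a conjugate-linear surjective isometry onto $L^2(\mathbb{R})$, the family $\{u_k\}$ is in fact an orthonormal basis of $L^2(\mathbb{R})$.

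Finally I would match the dimensions and invoke Theorem \ref{thm7}(i). Arguing as in the proof of Corollary \ref{cor}, the identity (\ref{eq4}) together with the Riesz-sequence property of $w$ and the completeness of the orthonormal basis $u$ shows that $\{c_i\}\in Ker(T_y)$ if and only if $\{\overline{c_i}\}\in Ker(T_f)$, so $dim(Ker(T_y))=dim(Ker(T_f))$; and since $ab<1$ prevents $f$ from being a Riesz basis, the deficit results of \cite{balan2003deficits} force $dim(Ker(T_f))=\infty$. Thus $dim\big((span\{w_j\})^{\perp}\big)=dim(Ker(T_y))=\infty$, which is exactly the second alternative in the hypothesis of Theorem \ref{thm7}(i). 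That theorem then yields a Parseval frame $v$, not an orthonormal basis, for which $w$ is a weak R-dual of $f$ with respect to $u$ and $v$; since $u$ was shown to be an orthonormal basis, this gives precisely the asserted weak R-duality with respect to an orthonormal basis and a genuine Parseval frame. I expect the main technical obstacle to be the verification of the operator identities $S_w=(\widetilde{L}\widetilde{L}^{*})^{-1}$, $S_f=(\widetilde{L}^{*}\widetilde{L})^{-1}$ and the consequent identification of $u$ as an orthonormal basis, as this requires careful bookkeeping of adjoints and inverses of the conjugate-linear operator $\widetilde{L}$.
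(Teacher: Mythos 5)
Your proof is correct, and its endgame coincides exactly with the paper's: both establish that $y$ is a Parseval frame for $\overline{span}\{w_j\}$, that (\ref{cond1}) holds, that $dim((span\{w_j\})^\perp)=dim(Ker(T_y))=\infty$ via the kernel identification $Ker(T_y)=Ker^*(T_f)$ and the deficit results exactly as in Corollary \ref{cor}, and then invoke Theorem \ref{thm7}(i) in its second alternative to produce the non-orthonormal Parseval frame $v$. Where you genuinely diverge is in how you procure the orthonormal basis $u$ and the Parsevalness of $y$: you build an explicit anti-unitary $J$, set $\widetilde{L}=J/\sqrt{A}$, verify the operator identities $S_w=(\widetilde{L}(\widetilde{L})^*)^{-1}$ and $S_f=((\widetilde{L})^*\widetilde{L})^{-1}$ (both sides reduce to $A\cdot I$ thanks to tightness and the equality of the Riesz bounds), and run the converse of Theorem \ref{thm9} to obtain $u_k=\widetilde{L}^{-1}\widetilde{w}_k=J^{-1}\left(w_k/\sqrt{A}\right)$, which you correctly identify as an orthonormal basis since $\{w_k/\sqrt{A}\}$ is an orthonormal basis of $\overline{span}\{w_j\}$ and $J^{-1}$ is anti-unitary --- a check that is genuinely needed, because Theorem \ref{thm9} only promises a Parseval $u$. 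The paper is lighter here: it takes an \emph{arbitrary} orthonormal basis $u$ (so that (\ref{cond2}) holds trivially) and applies Theorem \ref{lem2}, whose frame bounds $A_f/B_w$ and $B_f/A_w$ collapse to $1$ because $A_f=B_f=A_w=B_w$ by tightness and the Duality principle, giving $y$ Parseval at once; and (\ref{cond1}) is immediate from the biorthogonality of the Riesz sequence $w$ with its canonical dual, rather than extracted from Theorem \ref{thm9}. So the paper's argument shows that \emph{every} orthonormal basis $u$ works, whereas yours yields one specific $u$ tied to the chosen anti-unitary, at the cost of the heavier characterization machinery; what your route buys in exchange is an explicit formula for $u$ and a transparent view of the operator structure ($S_w$, $S_f$ and $\widetilde{L}$) underlying the duality, with all the conjugate-linear adjoint bookkeeping handled correctly.
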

\begin{proof}
We know that the adjoint system $\left\{\frac{1}{\sqrt{ab}}E_{\frac{m}{a}}\mathcal{T}_{\frac{n}{b}}f\right\}_{m, n \in \mathbb{Z}}$ is a Riesz sequence with bounds same as those of $\{E_{mb}\mathcal{T}_{na}f\}_{m,n \in \mathbb{Z}}$. Let $u=\{u_{mn}\}_{m,n \in \mathbb{Z}}$ be any orthonormal basis of $L^2(\mathbb{R})$. By Theorem \ref{lem2}, the sequence $\{y_{mn}\}$ given by $y_{mn}=\sum\limits_{m^\prime,n^\prime \in \mathbb{Z}}\left\langle u_{m^\prime n^\prime},E_{mb}\mathcal{T}_{na}f \right\rangle S_w^{-1}\left(\frac{1}{\sqrt{ab}}E_{\frac{m^\prime}{a}}\mathcal{T}_{\frac{n^\prime}{b}}f\right)$, where $S_w$ is the frame operator of $\left\{\frac{1}{\sqrt{ab}}E_{\frac{m}{a}}\mathcal{T}_{\frac{n}{b}}f\right\}$, is a Parseval frame for $\overline{span}\left\{\frac{1}{\sqrt{ab}}E_{\frac{m}{a}}\mathcal{T}_{\frac{n}{b}}f\right\}$. As $\left\{\frac{1}{\sqrt{ab}}E_{\frac{m}{a}}\mathcal{T}_{\frac{n}{b}}f\right\}_{m, n \in \mathbb{Z}}$ is a Riesz sequence, (\ref{cond1}) is satisfied. Further, for $ab<1$, we have proved in Corollary \ref{cor} that $dim\left(\left(span\left\{\frac{1}{\sqrt{ab}}E_{\frac{m}{a}}\mathcal{T}_{\frac{n}{b}}f\right\}\right)^\perp\right)=dim(Ker(T_y))=\infty$. Therefore, by Theorem \ref{thm7}, there exists some Parseval frame $v=\{v_{mn}\}_{m,n \in \mathbb{Z}}$ for $L^2(\mathbb{R})$, which is not an orthonormal basis, such that the adjoint system is a weak R-dual of the given tight Gabor frame with respect to $u$ and $v$.
\qed\end{proof}

In view of the above discussion, we ask the following interesting question.\\

\noindent\textbf{Open problem:}  Given a Gabor frame $\{E_{mb}\mathcal{T}_{na}f\}_{m,n \in \mathbb{Z}}$, which is neither a Riesz basis nor a tight frame, is the adjoint Gabor system $\left\{\frac{1}{\sqrt{ab}}E_{\frac{m}{a}}\mathcal{T}_{\frac{n}{b}}f\right\}_{m, n \in \mathbb{Z}}$ a weak R-dual of $\{E_{mb}\mathcal{T}_{na}f\}_{m,n \in \mathbb{Z}}$?.

\section*{Conflict of interest}
The authors declare that they have no conflict of interest.

\section*{Data Availability Statement}

 The authors declare that no data has been used for this research.



\end{document}